\documentclass[12pt]{amsart}
\usepackage{amsthm}
\usepackage{amsmath}
\usepackage{amssymb}
\usepackage{comment}
\usepackage{enumerate}
\usepackage{amsfonts}
\usepackage{dsfont}
\usepackage{mathtools}
\usepackage{afterpage}
\usepackage{hyperref}
\usepackage[capitalize]{cleveref}
\usepackage{todonotes}
\usepackage{algorithm,algorithmic}
\usepackage{multicol}
\usepackage{circuitikz}
\usepackage{mathrsfs}
\usepackage{tikz-cd}
\usetikzlibrary{math}
\usepackage{bm}
\usetikzlibrary{shapes.geometric}
\usepackage[utf8]{inputenc}
\usepackage[margin=1.1in]{geometry}
\DeclareTextFontCommand{\emph}{\color{blue}\em}

\newtheorem{theorem}{Theorem}[section]

\newtheorem{def-prop}[theorem]{Definition-Proposition}
\newtheorem{prop}[theorem]{Proposition}
\newtheorem{proposition}[theorem]{Proposition}

\newtheorem{lemma}[theorem]{Lemma}
\newtheorem{cor}[theorem]{Corollary}

\theoremstyle{definition}
\newtheorem{ex}[theorem]{Example}
\newtheorem{example}[theorem]{Example}

\newtheorem{notation}[theorem]{Notation}
\newtheorem{definition}[theorem]{Definition}
\newtheorem{defn}[theorem]{Definition}
\newtheorem{remark}[theorem]{Remark}

\theoremstyle{claim}

\def\F{\mathcal{F}}
\def\0{{\mathbf{0}}}
\renewcommand{\P}{\mathbf{P}}
\newcommand{\Pbb}{\mathbb{P}}
\newcommand{\Nbb}{\mathbb{N}}

\newcommand{\N}{\mathcal{N}}

\newcommand{\e}{\mathbf{e}}
\newcommand{\p}{\mathbf{p}}
\newcommand{\q}{\mathbf{q}}

\newcommand{\x}{\mathbf{x}}
\newcommand{\y}{\mathbf{y}}
\newcommand{\w}{\mathbf{w}}
\renewcommand{\u}{\mathbf{u}}

\newcommand{\C}{\mathbb{C}}

\newcommand{\z}{\mathbf{z}}
\newcommand{\R}{\mathbb{R}}
\newcommand{\Q}{\mathbb{Q}}

\newcommand{\VolC}{\mathrm{Vol}^{\vee}}

\newcommand\ip[1]{\langle #1 \rangle}
\newcommand{\base}{y_0}

\DeclareMathOperator{\Vol}{Vol}

\DeclareMathOperator{\Int}{Int}

\def\Z{{\mathbb{Z}}}
\def\F{{\mathcal{F}}}
\def\M{{\mathcal{M}}}

\def\T{{\mathcal{T}}}
\def\R{{\mathbb R}}

\def\Res{{\rm Res}}
\def\sp{{\rm span}}

\def\b{{\mathbf{b}}}
\def\a{{\mathbf{a}}}
\def\h{{\mathbf{h}}}
\def\n{{\mathbf{n}}}
\def\p{{\mathbf{p}}}
\def\q{{\mathbf{q}}}
\def\z{{\mathbf{z}}}
\def\t{{\mathbf{t}}}

\def\v{{\mathbf{v}}}
\def\w{{\mathbf{w}}}

\def\bby{{\bar \y}}

\def\tp{{\tilde \p}}
\def\ty{{\tilde \y}}

\DeclareMathOperator{\Cone}{cone}
\DeclareMathOperator{\codim}{codim}
\DeclareMathOperator{\conv}{conv}

\newcommand{\keydef}{\text{dual lattice function }}

\newcommand{\TL}[1]{{\bf TL: #1}}
\newcommand{\lei}[1]{{\color{blue} \sf Lei: [#1]}}

\begin{document}
\title{Dual lattice functions of polytopes}
\author{Yibo Gao}
\address{Beijing International Center for Mathematical Research, Peking University, \mbox{Beijing, 100871}}
\email{\href{mailto:gaoyibo@bicmr.pku.edu.cn}{{\tt gaoyibo@bicmr.pku.edu.cn}}}
\author{Thomas Lam}
\address{Department of Mathematics, University of Michigan, \mbox{Ann Arbor, MI 48109}}
\email{\href{mailto:tfylam@umich.edu}{{\tt tfylam@umich.edu}}}
\author{Lei Xue}
\address{Department of Mathematics, Colby College, \mbox{Waterville, ME 04901}}
\email{\href{mailto:leixue@colby.edu}{{\tt leixue@colby.edu}}}

\thanks{T.L.\ is supported by Grant No.~DMS-1953852 and DMS-2348799 from the National Science Foundation. Y.G. is partially supported by NSFC Grant no. 12471309.} 
\date{\today}

\begin{abstract}
We define the dual lattice function of a rational polytope $P$ via the discrete Laplace transform of the exponential of its support function. This definition is a discrete analogue of the dual volume function of a polytope that the authors studied in previous work. We show that the dual lattice function is valuative, and by multiplying with the torus form, it becomes the canonical form of the exponential polytope $\mathrm{exp}(P)$ as a positive geometry.  This result suggests the study of the class of toric polytopes, which are certain semialgebraic subsets of projective toric varieties. Our work is a first step towards discretization of positive geometries in the simplest case of polytopes.
\end{abstract}
\maketitle
\setcounter{tocdepth}{1}

\section{Introduction}

\subsection{Volumes and lattice points}
Among the valuative invariants of polytopes, the two most classical ones are the \emph{volume}
and the number (or generating function) of \emph{integer points}. There is a long history of interaction between the study of volumes and of lattice points, for example, Ehrhart polynomials, Brion's Theorem, Euler-Maclaurin formulae,  and Todd operators (see, for example, \cite{Brion88,BV,GP}), and so on.  For a brief survey of this rich field, we refer the reader to \cite{Barvinok} and \cite{BR}.

Recent developments in \emph{positive geometry} \cite{ABL,LamOPAC} have suggested the study of the dual volume function, which can be intuitively understood as the volume of the polar polytope as a function of the location of the origin. In our recent work \cite{GLX}, we proposed the study of the more general \emph{dual mixed volume function} of a tuple $(P_1,P_2,\ldots,P_r)$ of polytopes. In this work, we study lattice point generating functions following a similar dual philosophy.

Let $P \subset \R^d$ be a polytope, which we assume to be full-dimensional in the introduction.  We define the \emph{\keydef} $K_P(\z)$ of $P$ as the sum
$$
K_P(\z):=\sum_{\v\in \Z^d}\exp(-h_{P-\z}(\v)),
$$
where $h_P$ denotes the support function of $P$.  We show in \Cref{thm: sum-of-cones} that $K_P(\z)$ is a rational function in the variables $\y = \exp(\z)$, and in \Cref{thm:limit-to-VolC} we prove that the dual volume rational function of \cite{GLX} can be obtained from $K_P(\z)$ as a limit.  Whereas dual volume is the Laplace transform of the support function, our \keydef $K_P(\y)$ can be viewed as a discrete Laplace transform; see Remark~\ref{rmk:Laplace}. Moreover, there is a natural connection between the \keydef and Ehrhart theory (\Cref{prop:Ehrhart}).

In \cref{sec:properties}, we prove that the assignment $P \mapsto K_P(\z)$ is a valuative function on full-dimensional polytopes, along with other foundational properties of $K_P$. Analogously to the dual mixed volume of \cite{GLX}, we may also define the \emph{dual mixed lattice function}
$$
K_{P_1,\ldots,P_r}(\x) = \sum_{\v\in \Z^d}\exp(-h_{x_1P_1+\cdots+x_rP_r}(\v)).
$$
A systematic study of $K_{P_1,\ldots,P_r}(\x)$ is left for future work.

\subsection{Positive geometries and canonical forms}
The data of a \emph{positive geometry} consists of a triple $(X,X_{\geq0},\Omega_X)$, where $X$ is a projective variety, $X_{\geq0}$ is a closed semi-algebraic subset, and $\Omega_X$ is a top-degree meromorphic differential form on $X$ uniquely determined by a recursion on its residues. Readers are referred to \cite{ABL,LamOPAC} for details, and to \cite{ranestad2025,lamICM} for a broad survey. Recently, many natural spaces have been shown to admit the structure of a positive geometry (see for instance \cite{maazouz2024positive,telen2025,shen2026abct,sturmfels2026cubic}). The present work provides a new instance.  As one of the most classical examples, every polytope $P \subset \R^d \subset \Pbb^d$ is a positive geometry with canonical form given by
\[\Omega_P = \Vol^\vee_\z(P) dz_1 \cdots dz_d,\]
where $\Vol^\vee_\z(P)$ denotes the dual volume of the translation $P- \z$ \cite{GLX}. 

Remarkably, the \keydef $K_P(\y)$ plays a similar role for the exponential polytope $\exp(P)$.  Coordinatewise exponentiation defines an injective map $\R^d \to \R_{>0}^d$.  The \emph{exponential polytope} $\exp(P)$ is the image of $P$ under this map.  We show (\cref{thm:expP2}) that the exponential polytope is a positive geometry with canonical form 
\begin{equation}\label{eq:Omegaintro}
\Omega_{\exp(P)} = K_P(\y) \frac{dy_1 \cdots dy_d}{y_1\cdots y_d}.
\end{equation}
Here, $P$ is assumed to be a polytope with vertices belonging to a lattice $L \cong \Z^d$, and $\R_{>0}^d$ is viewed as the positive real points of a complex torus $T_L \cong (\C^\times)^d$ with compactification a projective toric variety $X(L)$.  The positive geometry $(X(L), \exp(P))$ has the property that every face is a positive geometry sitting inside a toric variety. Thus exponential polytopes are examples of \emph{toric polytopes} (\Cref{def:toricpolytope}) that we introduce in \cref{sub:toric-polytopes}.

Another example of a toric polytope is the positive part $X(L)_{\geq 0}$ of the toric variety $X(L)$.  This is the closure of the positive part $(T_L)_{>0} \cong \R_{>0}^d$ of the dense torus $T_L \subset X(L)$.  More generally, while we do not prove it here, we expect that the \keydef can be generalized to any polyhedron $P \subset \R^d$, and the closure of the exponential polyhedron in $X(L)(\R)$ would be a positive geometry with canonical form \eqref{eq:Omegaintro}.

\subsection{Motivation from physics}

Canonical forms of positive geometries were defined to capture combinatorial and geometric features of scattering amplitudes.  For example, the canonical form of the associahedron polytope encodes scattering amplitudes in planar $\phi^3$-theory; the conjectural canonical form of the amplituhedron encodes super Yang-Mills scattering amplitudes.

Whereas planar $\phi^3$-amplitudes are related to dual volumes of associahedra, the dual lattice function of an associahedron has the physical interpretation as an \emph{$\alpha'$-corrected amplitude}, and is inverse to the string theory KLT (Kawai-Lewellen-Tye) matrix.  See \cite{lam2025matroidsamplitudes} for further details, where these assertions are explained in terms of the Bergman fan of a matroid.

Our work can be viewed as a first step towards a discrete analogue of positive geometry.  A natural next step would be to investigate dual lattice functions in the Grassmannian.

\subsection{Organization of the paper}
In \Cref{sec:prelim}, we introduce the necessary preliminary material. In \Cref{sec:definitions}, we define the function $K_P$, the main object of study, and provide formulae for its computation; we also explain connections to Ehrhart theory and dual mixed volume. In \Cref{sec:properties}, we study further properties (valuation, degree, residues) of $K_P$. In \Cref{sec:toric}, we introduce $\exp(P)$, the exponential polytope of $P$, and prove the main theorem (\Cref{thm:expP2}) that they are positive geometries with canonical form $\Theta_P:=K_P(\y)\omega_{T_P}$, and are also examples of the toric polytopes that we define.

\section{Preliminaries}\label{sec:prelim}
\subsection{Lattices and tori}
Let $L$ be a lattice of rank $d$, and let $L^{\vee}:=\mathrm{Hom}_{\mathbb{Z}}(L,\Z)$ be its dual lattice. For an affine subspace $H\subset L\otimes_{\Z}\R$ defined over $\Q$, and any $\z_0\in H$ denote
\begin{align*}
H_0&=H-\z_0:=\{\z-\z_0\:|\:\z\in H\}\\
H_0^{\perp}&:=\{\v\in L^{\vee}\otimes_{\Z}\R\:|\: \v(\z)=0\ \forall \z\in H_0\}.
\end{align*}
Note that the linear spaces $H_0, H_0^\perp$ do not depend on the choice $\z_0$. 

We have a sublattice $H_0\cap L$ of $L$. Correspondingly, we can translate this lattice by $\z_0$ to move it inside $H$, and denote it by $L_{H,\z_0}:=\{\z\in H\:|\: \z-\z_0\in L\}$, the \emph{sublattice of $L$ on $H$ with origin $\z_0$}. The group structure $(+_{L'})$ on $L'=L_{H,\z_0}$ is given by \[\x+_{L'}\y=\x+\y-\z_0.\] Correspondingly, the vector space structure $(+_{H},\cdot_{H})$ on $H$ is given by \[\x+_H \y=\x+\y-\z_0,\quad \alpha\cdot_H \x=\alpha\x+(1-\alpha)\z_0.\]
\begin{lemma}\label{lem:dual-lattices}
With notations as above, $H_0\cap L$ and $L^{\vee}/H_0^{\perp}$ are dual lattices. 
\end{lemma}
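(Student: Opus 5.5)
The plan is to show that the natural pairing $L^{\vee}\times L\to\Z$ descends to a perfect pairing between $L^{\vee}/(H_0^{\perp}\cap L^{\vee})$ and $H_0\cap L$. Here the quotient $L^{\vee}/H_0^{\perp}$ is read as $L^\vee$ modulo the sublattice $H_0^{\perp}\cap L^{\vee}$. Equivalently, it is the image of $L^\vee$ in $(L^\vee\otimes\R)/H_0^\perp$. The pairing $\langle \v,\z\rangle=\v(\z)$ for $\v\in L^\vee$ and $\z\in H_0\cap L$ is well defined on the quotient, because elements of $H_0^\perp$ kill $H_0$. It is integer valued because $\z\in L$.

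The first step is to identify $H_0\cap L$ as a \emph{saturated} sublattice of $L$. Since $H$ is defined over $\Q$, the subspace $H_0$ is spanned by rational vectors. Hence $H_0\cap L$ has rank $k=\dim H_0$, and it is saturated: if $m\z\in H_0\cap L$ with $\z\in L$, then $\z\in H_0$. By the standard structure theorem for saturated sublattices (elementary divisors / Smith normal form), $L/(H_0\cap L)$ is torsion-free, hence free. So we may choose a basis $\e_1,\dots,\e_d$ of $L$ with $\e_1,\dots,\e_k$ a basis of $H_0\cap L$, and therefore of $H_0$ over $\R$.

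The second step is to take the dual basis $\e_1^*,\dots,\e_d^*$ of $L^\vee$. Then $H_0^\perp\cap L^\vee$ is exactly the $\Z$-span of $\e_{k+1}^*,\dots,\e_d^*$. The reason is that a functional $\sum c_i\e_i^*$ vanishes on $H_0$ iff $c_1=\cdots=c_k=0$. Consequently $L^\vee/(H_0^\perp\cap L^\vee)$ is free with basis the classes of $\e_1^*,\dots,\e_k^*$. In the basis $\e_1,\dots,\e_k$ of $H_0\cap L$, the pairing has Gram matrix the identity. So the pairing is perfect over $\Z$, which proves the two lattices are dual.

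The only real point is the existence of the adapted basis. This is where rationality of $H$ is used: without it, $H_0\cap L$ could have rank smaller than $\dim H_0$. I would cite it as standard, or derive it quickly from the fact that finitely generated torsion-free abelian groups are free, so that the sequence $0\to H_0\cap L\to L\to L/(H_0\cap L)\to0$ splits. Everything else is bookkeeping.
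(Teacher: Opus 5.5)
Your proof is correct, but it takes a different route from the paper's.

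\textbf{The paper's route.} The paper works without coordinates. It shows that the evaluation map $f\colon H_0\cap L\to \mathrm{Hom}(L^\vee/H_0^\perp,\Z)$, $\z\mapsto(\v\mapsto\v(\z))$, is a bijection. Injectivity comes from choosing some $\v\in L^\vee$ with $\v(\z)\neq 0$. For surjectivity, it lifts $\sigma$ to $\bar\sigma\colon L^\vee\to\Z$ and uses $L^{\vee\vee}\cong L$ to write $\bar\sigma$ as $\v\mapsto\v(\z_\sigma)$ for some $\z_\sigma\in L$. It then concludes $\z_\sigma\in H_0^{\perp\perp}=H_0$.

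\textbf{Your route.} You split off the saturated sublattice $H_0\cap L$ to get an adapted basis of $L$, and read the duality off an identity Gram matrix.

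\textbf{Comparison.} The paper's route avoids having to construct a splitting. Yours makes visible two points the paper leaves implicit.
\begin{enumerate}
\item The paper only establishes $H_0\cap L\cong (L^\vee/H_0^\perp)^\vee$. The reverse identification also needs the quotient to be torsion-free, which holds because $H_0^\perp\cap L^\vee$ is saturated, so that reflexivity applies. Your basis shows directly that the quotient is free of rank $k$ and that the pairing is perfect on both sides at once.
\item The paper uses rationality silently. In the surjectivity step, $\bar\sigma$ is only known to vanish on the lattice points $H_0^\perp\cap L^\vee$. Passing to all of $H_0^\perp$ requires those points to span $H_0^\perp$. You isolate this hypothesis at the rank count $\operatorname{rank}(H_0\cap L)=\dim H_0$, which is exactly where it is needed. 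For example, for an irrational line $H_0\subset\R^2$ both $H_0\cap L$ and $H_0^\perp\cap L^\vee$ are $0$, and the statement fails.
\end{enumerate}
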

\begin{proof}
It suffices to show that the map $f:H_0\cap L\rightarrow \mathrm{Hom}(L^{\vee}/H_0^{\perp},\Z)$ given by $\z\mapsto (\v\mapsto\v(\z))$ is a bijection. First, this map is well-defined, since if $\v\in H_0^{\perp}$ and $\z\in H_0$, $\v(\z)=0$. This map is also linear. Pick a nonzero $\z\in H_0\cap L$ and pick $\v\in L^{\vee}$ such that $\v(\z)\neq0$. Write the image of $\v$ in $L^{\vee}/H_0^{\perp}$ also as $\v$. Then $f(\z)(\v)=\v(\z)\neq0$, $f(\z)\neq0$ so $f$ is injective. To show that $f$ is surjective, pick any $\sigma:L^{\vee}/H_0^{\perp}\rightarrow\Z$. We have $\bar\sigma:L^{\vee}\rightarrow L^{\vee}/H_0^{\perp}\rightarrow\Z$. By the natural isomorphism between $L^{\vee\vee}$ and $L$, there exists some $\z_\sigma\in L$ such that $\bar\sigma$ is given by $\v\mapsto\v(\z_\sigma)$. In order for $\bar\sigma$ to descend to a map $\sigma:L^{\vee}/H_0^{\perp}\rightarrow\Z$, we must have that $\v(\z_\sigma)=0$ for all $\v\in H_0^{\perp}$. Thus, $\z_{\sigma}\in H_0$ as $H_0^{\perp\perp}=H_0$. It is now straightforward to see that $f(\z_\sigma)=\sigma$ so we are done.
\end{proof}

\subsection{Polytopes and their normal fans}
Let $L$ be a lattice of rank $d$. Our ambient vector space is $L\otimes_{\Z}\R\simeq\R^d$. For a non-empty closed bounded convex set $S\subset L\otimes_{\Z}\R$, its \emph{support function} $h_S$ on $L^{\vee}\otimes_{\Z}\R=\mathrm{Hom}_{\Z}(L,\R)$ is given by $\v\mapsto -\min_{\x\in S}\v(\x)$. 

For a face $F$ of a polytope $P\subset L\otimes_\Z\R$, its \emph{dual cone} $C_F\subset L^{\vee}\otimes_\Z\R$ consists of points whose \emph{support face} in $P$ contains $F$. In other words, \[C_F:=\{\v\in L^{\vee}\otimes_\Z\R\:|\: h_P(\v)=-\v(\z)\text{ for all }\z\in F\}.\]
The \emph{normal fan} $\N(P)$ consists of all such cones $C_F$ for faces $F$ of $P$. The maximal cones in $\N(P)$ are the dual cones corresponding to vertices of $P$. Let $H$ be the affine span of $P$ with a chosen $\z_0$ as the origin. The \emph{reduced dual cone} for a face $F$ of $P$ is \[\widetilde{C}_F:=\{\v\in L^{\vee}/H_0^{\perp}\otimes_\Z\R\:|\: \min_{\x\in P}\v(\x)=\v(\z)\text{ for all }\z\in F\}\]
and the reduced normal fan $\widetilde{\mathcal{N}}(P)$ consists of all such cones $\widetilde{C}_F$ for faces $F$ of $P$. If $P$ is full-dimensional in $L\otimes_\Z\R$, the reduced normal fan is just the normal fan.

\section{Dual lattice point function}\label{sec:definitions}
\subsection{The main definition}
In this section, we define our main object of study.
\begin{definition}\label{def:dual-lattice-point-generating-function}
Let $P\subset L\otimes_{\Z}\R$ be a rational polytope and let $H$ be its affine span. Pick $\z_0\in H$ and let $L'=L_{H,\z_0}$ be the sublattice on $H$ with origin $\z_0$. The \emph{\keydef of $P$} is the function on $H$ defined by 
\begin{equation}\label{eqn:main-def}
K_P(\base;\z):=\sum_{\v\in (L')^{\vee}}\exp_{\base}(-h_{P-(\z-\z_0)}(\v))=\sum_{\v\in (L')^{\vee}}\exp_{\base}(-h_{P}(\v)-\v(\z{-}\z_0)).
\end{equation}
Here we use the nonstandard notation of $\exp_t(A):=t^A=\exp(\ln(t)A)$ to emphasize the exponent. Throughout, these notations will be used interchangeably depending on the context. We write $K_P(\z):=K_P(e;\z)$ for simplicity. 
\end{definition}


By \Cref{lem:dual-lattices}, $(L')^{\vee}$ can be identified with $L^{\vee}/H_0^{\perp}$, where the pairing between $L'$ and $L^{\vee}/H_0^{\perp}$ is given by $(\z,\v)\mapsto \v(\z-\z_0)$. Thus, \Cref{eqn:main-def} can also be written as
\begin{equation}
K_P(\base;\z)=\sum_{\v\in L^{\vee}/H_0^{\perp}}\exp_{\base}\big(\min_{\x\in P}\v((\x{-}\z_0){-}(\z{-}\z_0))\big)=\sum_{\v\in L^{\vee}/H_0^{\perp}}\exp_{\base}(\min_{\x\in P}\v(\x{-}\z)),
\end{equation}
and is independent of the choice of the origin $\z_0$ in the sublattice on $H$. 

\begin{remark}
An important property of Definition \ref{def:dual-lattice-point-generating-function} is that $K_P$ depends only on the full-dimensional polytope $P$ in the space $H=L'\otimes_\Z\R$, as the definition does not depend on the bigger lattice $L\supset L'$. 
\end{remark}

\subsection{Formula for calculating $K_P$}\label{subsect: formular for K_P}
For a rational simplicial cone (not necessarily full-dimensional) $C\subset L^{\vee}\otimes_\Z\R$ with integral generators $\u_1,\ldots,\u_m\in L^{\vee}$, the \emph{fundamental parallelepiped} $\Pi(C)$ (resp. the \emph{open-closed fundamental parallelepiped} $\Pi^\circ(C)$) is given by \[\Pi(C):=\left\{\sum_{i=1}^m \alpha_i\u_i\:|\:\alpha_i\in[0,1)\right\}, \qquad \text{and} \qquad \Pi^\circ(C):=\left\{\sum_{i=1}^m \alpha_i\u_i\:|\:\alpha_i\in(0,1]\right\}.\]
Here, the integral generators $\u_1,\ldots,\u_m$ are determined by the following properties: $C=\{\sum_{i=1}^m\alpha_i\u_i\:|\:\alpha_i\in\R_{\geq0}\}$, $\dim C=m$, $\u_i\in L^{\vee}$ and $\u_i/k\notin L^{\vee}$ for any $k\in\Z_{\geq2}$, for all $i$. 

Let $C \subset  L^{\vee}\otimes_\Z\R$ be a rational simplicial cone. To compute $K_P(\z)$, we use the following lattice point generating function, which is called the \emph{integer-point transform} of $C$ in \cite{BS}.
\begin{align}\label{eq: Def of A_c}
A_C(\t):=\sum_{\v\in C\cap L^{\vee}}\t^{\v}.
\end{align}

The following is a well-known result about the integer-point transform.
\begin{prop}\label{prop:lattice-point-generating-function}
Let $C\subset L^{\vee}\otimes_\Z\R$ be a rational simplicial cone with integral generators $\u_1,\ldots,\u_m$, and denote by $\Pi= \Pi(C)$ the fundamental parallelepiped for $C$. Then 
\[
A_C(\t)=\left(\sum_{\v\in\Pi\cap L^{\vee}}\t^{\v}\right)\prod_{i=1}^m\frac{1}{1-\t^{\u_i}}.
\]
\end{prop}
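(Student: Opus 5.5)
The plan is to prove that every lattice point of $C$ decomposes uniquely as a point of the fundamental parallelepiped plus a nonnegative integer combination of the generators. Once this is in hand, the formula follows by expanding geometric series.

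\textbf{Step 1 (coordinates).} Since $\dim C=m$ and $C$ is generated by $\u_1,\ldots,\u_m$, these vectors are linearly independent over $\R$. Hence every $\v\in C$ has a unique expression $\v=\sum_{i=1}^m\alpha_i\u_i$ with $\alpha_i\ge 0$.

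\textbf{Step 2 (splitting).} For $\v\in C$, write $\alpha_i=\lfloor\alpha_i\rfloor+\{\alpha_i\}$, and set
\[
\w:=\sum_i\{\alpha_i\}\u_i\in\Pi, \qquad \n:=(\lfloor\alpha_1\rfloor,\ldots,\lfloor\alpha_m\rfloor)\in\Z_{\ge0}^m .
\]
If $\v\in L^{\vee}$, then $\w=\v-\sum_i\lfloor\alpha_i\rfloor\u_i$ lies in $L^{\vee}$, because each $\u_i\in L^{\vee}$. Thus $\w\in\Pi\cap L^{\vee}$.

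\textbf{Step 3 (bijection).} Conversely, any pair $(\w,\n)\in(\Pi\cap L^{\vee})\times\Z_{\ge0}^m$ gives a point $\w+\sum_i n_i\u_i\in C\cap L^{\vee}$. The two maps are mutually inverse. Uniqueness holds because the coordinates $\alpha_i$ are unique by linear independence, and the decomposition of a nonnegative real into integer part plus a fractional part in $[0,1)$ is unique. Hence
\[
(\Pi\cap L^{\vee})\times\Z_{\ge0}^m\;\longrightarrow\; C\cap L^{\vee},\qquad (\w,\n)\mapsto \w+\sum_i n_i\u_i,
\]
is a bijection.

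\textbf{Step 4 (summation).} Summing $\t^{\v}=\t^{\w}\prod_i(\t^{\u_i})^{n_i}$ over this bijection factors $A_C(\t)$ as
\[
\left(\sum_{\w\in\Pi\cap L^{\vee}}\t^{\w}\right)\prod_{i=1}^m\sum_{n\ge0}(\t^{\u_i})^{n}.
\]
Each inner geometric series equals $1/(1-\t^{\u_i})$. This identity is read either as one of formal Laurent series, or analytically on the region where $|\t^{\u_i}|<1$ for all $i$, where everything converges absolutely. The set $\Pi\cap L^{\vee}$ is finite, since $\Pi$ is bounded and $L^{\vee}$ is discrete, so the prefactor is a Laurent polynomial.

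The main obstacle is not any single step, since all are routine. It is only the care needed in Step 3 to ensure uniqueness, which rests on the linear independence of the generators. The primitivity of the $\u_i$ plays no role in the validity of the identity; it only fixes a normalization of $\Pi$.
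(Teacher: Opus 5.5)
Your proposal is correct and follows the same route as the paper: the paper tiles $C$ by the translates $\q+\Pi$ with $\q=\sum q_i\u_i$, $q_i\in\Z_{\geq 0}$, writes each $\v\in C\cap L^{\vee}$ uniquely as $\v=\q+\p$ with $\p\in\Pi\cap L^{\vee}$, and sums the geometric series. You do the same, and in addition spell out the uniqueness (via linear independence and the integer/fractional-part split) and the finiteness and convergence details that the paper leaves implicit.
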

\begin{proof} 
Translations of the fundamental parallelepiped $\Pi$ tile the cone $C$.  Thus every lattice point $\v \in C \cap L^{\vee}$ lies in exactly one translate $\q + \Pi$ where $\q = \sum q_i\u_i$ with $q_i\in \Z_{\geq 0}$. In other words, every $\v$ can be uniquely decomposed as $\v = \q + \p$ where $\p\in \Pi \cap L^{\vee}$. The statement follows. 
\end{proof}

The following is a straightforward consequence of \Cref{prop:lattice-point-generating-function}, by repeated application of the fact that $A_C(\t)=0$ if $C$ contains a line. This classical reciprocity is first shown by Stanley in \cite{Stanley75}.

\begin{cor}[Stanley's Reciprocity]\label{reciprocity of A_C} The integer-point transform of the interior of $C$ is 
\[A_{\Int(C)}(\t) :=\sum_{\v\in \Int(C) \cap L^{\vee}}\t^{\v}= (-1)^m A_{C}(\t^{-1}).\]
\end{cor}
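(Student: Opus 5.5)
The plan is to reduce the interior sum to a signed sum over faces by inclusion--exclusion, and then use the fact that the integer-point transform of a cone containing a line is zero. Write $C=\Cone(\u_1,\ldots,\u_m)$ and, for $S\subseteq[m]$, let $C_S=\Cone(\u_i: i\in S)$ be the corresponding face of $C$. Since $C$ is simplicial, $\Int(C)$ (relative interior) consists of the points $\sum\alpha_i\u_i$ with all $\alpha_i>0$. Inclusion--exclusion over which coordinates vanish then gives, as formal series,
\[
A_{\Int(C)}(\t)=\sum_{S\subseteq[m]}(-1)^{m-|S|}A_{C_S}(\t).
\]

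Next, for each $S$ consider the cone $D_S=\Cone(\u_i\ (i\in S),\ -\u_j\ (j\notin S))$. As rational functions, $A_{D_S}(\t)$ can be expanded by inclusion--exclusion in the same way. The key identity is that, for each $S$, the sum of $A_{D}$ over all sign-flipped cones containing a given face pattern relates $A_{C_S}(\t)$ to $A_C(\t^{-1})$. Concretely, I would instead use the cleaner tiling route via \Cref{prop:lattice-point-generating-function}:
\[
A_C(\t^{-1})=\Big(\sum_{\v\in\Pi\cap L^\vee}\t^{-\v}\Big)\prod_i\frac{1}{1-\t^{-\u_i}}
=(-1)^m\Big(\sum_{\v\in\Pi\cap L^\vee}\t^{-\v}\Big)\prod_i\frac{\t^{\u_i}}{1-\t^{\u_i}}.
\]

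By the same tiling argument applied to translates of $\Pi^\circ(C)$, the interior lattice points of $C$ are uniquely of the form $\p+\q$ with $\p\in\Pi^\circ\cap L^\vee$ and $\q\in\sum\Z_{\ge0}\u_i$. Hence
\[
A_{\Int(C)}(\t)=\Big(\sum_{\p\in\Pi^\circ\cap L^\vee}\t^{\p}\Big)\prod_i\frac{1}{1-\t^{\u_i}}.
\]
It therefore suffices to show
\[
\sum_{\p\in\Pi^\circ\cap L^\vee}\t^{\p}=\sum_{\v\in\Pi\cap L^\vee}\t^{\u_1+\cdots+\u_m-\v}.
\]
This follows from the bijection $\v\mapsto \u_1+\cdots+\u_m-\v$, which sends $\Pi$ to $\Pi^\circ$ (it maps $\alpha_i\in[0,1)$ to $1-\alpha_i\in(0,1]$) and preserves $L^\vee$.

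The only point requiring care is the tiling of $\Int(C)$ by translates of $\Pi^\circ$ using nonnegative integer shifts. This holds because writing $\alpha_i=\lceil\alpha_i\rceil-1+(\alpha_i-\lceil\alpha_i\rceil+1)$ gives a unique decomposition with fractional part in $(0,1]$ and a nonnegative integer part whenever $\alpha_i>0$. This is where the relative-interior condition $\alpha_i>0$ is used.
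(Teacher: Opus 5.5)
Your proof is correct, but the argument you actually carry out differs from the one the paper indicates. The paper derives the corollary from \Cref{prop:lattice-point-generating-function} ``by repeated application of the fact that $A_C(\t)=0$ if $C$ contains a line.'' There one flips generators one at a time. Writing $W$ for the remaining generators, one has the indicator identity $[\Cone(\u_1,W)]+[\Cone(-\u_1,W)]=[\R\u_1+\Cone(W)]+[\Cone(W)]$. The first term on the right contains a line, so its transform vanishes, giving $A_{\Cone(-\u_1,W)}=-\bigl(A_{\Cone(\u_1,W)}-A_{\Cone(W)}\bigr)$. This is minus the transform of the cone with the facet $\alpha_1=0$ removed. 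After $m$ flips, $A_C(\t^{-1})=A_{-C}(\t)=(-1)^mA_{\Int(C)}(\t)$.

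Your first two paragraphs, with the face inclusion--exclusion and the sign-flipped cones $D_S$, were heading in that direction. Nothing later depends on them, so you should delete them. What you do instead is prove the tiling of $\Int(C)$ by $\Pi^\circ(C)+\sum_i\Z_{\ge0}\u_i$. You then match numerators through the bijection $\v\mapsto\u_1+\cdots+\u_m-\v$ from $\Pi(C)\cap L^\vee$ onto $\Pi^\circ(C)\cap L^\vee$, combined with $\prod_i(1-\t^{-\u_i})^{-1}=(-1)^m\prod_i\t^{\u_i}(1-\t^{\u_i})^{-1}$. This gives the identity of rational functions directly.

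Your route is self-contained. It uses only the explicit formula of \Cref{prop:lattice-point-generating-function}. It never needs the fact that the integer-point transform extends to a valuation on all rational polyhedral cones, vanishing on those containing a line; that is a nontrivial theorem the paper invokes without proof. As a by-product, your tiling step also proves the $\Pi^\circ$ formula underlying \Cref{lem: parallelepiped formula interior}.

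The paper's route is more structural. It is not tied to the fundamental parallelepiped of a simplicial cone. The same line-killing mechanism is what drives the proof of \Cref{lem:slice}.
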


The function $A_C$ has been very well-studied since Stenley's Reciprocity Theorem, appearing in classicial works like Brion's theorem \cite{Brion88} and Barvinok's polynomial-time counting algorithm \cite{Barvinok94}. For a comprehensive overview, see \cite{BR}, and \cite{BS}.

\begin{lemma}\label{lem: parallelepiped formula}
Let $P\subset L\otimes_\Z\R$ be a rational polytope of full dimension, $F$ be a face and $C\subset C_F$ be a rational simplicial cone with integral generators $\u_1,\ldots,\u_m$. Then 
\[K_{P,C}(\base;\z):=\sum_{\v\in C\cap L^{\vee}}\exp_{\base}(\min_{\x\in P}\v(\x-\z))=\left(\sum_{\v\in\Pi(C)\cap L^{\vee}}\base^{\v(\p{-}\z)}\right)\prod_{i=1}^m\frac{1}{1{-}\base^{\u_i(\p{-}\z)}}\]
for any point $\p\in F$. 
\end{lemma}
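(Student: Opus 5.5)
The plan is to reduce the sum to the integer-point transform $A_C$ and then apply \Cref{prop:lattice-point-generating-function}. The key observation is that the minimum $\min_{\x\in P}\v(\x)$ becomes linear in $\v$ once $\v$ is confined to the dual cone $C_F$.

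\textbf{Step 1: linearize the minimum on $C_F$.} By the definition of the dual cone, every $\v\in C_F$ satisfies $h_P(\v)=-\v(\p)$ for all $\p\in F$. Equivalently, $\min_{\x\in P}\v(\x)=\v(\p)$. Since $C\subset C_F$, every $\v\in C\cap L^{\vee}$ then satisfies
\[
\min_{\x\in P}\v(\x-\z)=\min_{\x\in P}\v(\x)-\v(\z)=\v(\p-\z),
\]
where $\p\in F$ is any fixed point. The result does not depend on which $\p\in F$ is chosen. Hence
\[
K_{P,C}(\base;\z)=\sum_{\v\in C\cap L^{\vee}}\base^{\v(\p-\z)}.
\]

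\textbf{Step 2: identify this with $A_C$.} Put $\w:=\p-\z\in L\otimes_\Z\R$, and define the monomial $\t^{\v}:=\base^{\v(\w)}$. This map is multiplicative in $\v$: $\t^{\v+\v'}=\t^{\v}\t^{\v'}$, because $\v\mapsto\v(\w)$ is linear. The sum from Step 1 is then exactly $A_C(\t)$ as defined in \eqref{eq: Def of A_c}. In coordinates, this is the specialization $t_j=\base^{w_j}$ relative to a basis of $L$.

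Applying \Cref{prop:lattice-point-generating-function} now gives the stated formula, since $\t^{\u_i}=\base^{\u_i(\p-\z)}$ and $\t^{\v}=\base^{\v(\p-\z)}$ for $\v\in\Pi(C)\cap L^{\vee}$.

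\textbf{Main obstacle: convergence.} \Cref{prop:lattice-point-generating-function} is an identity of formal series. To read it as an equality of actual functions, we need convergence. I would restrict to the region where $|\base^{\u_i(\p-\z)}|<1$ for every $i$; for $\base=e$ this means $\u_i(\p-\z)>0$. On that region the decomposition $\v=\q+\p'$ from the proof of \Cref{prop:lattice-point-generating-function} expresses the sum as:
\begin{itemize}
\item a finite sum over $\Pi(C)\cap L^{\vee}$, which is finite because $\Pi(C)$ is bounded, times
\item a product of absolutely convergent geometric series.
\end{itemize}
Absolute convergence justifies the rearrangement, so the equality holds as genuine functions there. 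Outside this region, the right-hand side should be understood as the meromorphic continuation of the left-hand side.
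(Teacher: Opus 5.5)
Your proof is the paper's proof: on $C\subset C_F$ the minimum linearizes to $\v(\p-\z)$, so $K_{P,C}$ is the integer-point transform $A_C$ evaluated at $\t^{\v}=\base^{\v(\p-\z)}$, and \Cref{prop:lattice-point-generating-function} finishes the argument; your convergence discussion is extra care that the paper leaves implicit. One sign slip there: for $\base=e$ the condition $|\base^{\u_i(\p-\z)}|<1$ means $\u_i(\p-\z)<0$, not $>0$ (for instance $\sum_{v\ge 0}e^{v(a-z)}=1/(1-e^{a-z})$ in \Cref{ex:dim-1} holds for $z>a$), and this region contains $\Int(P)$ for every such cone, so all the cone series converge simultaneously there.
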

\begin{proof}
By definition of $C_F$, for any $\v \in C$, $\min_{\x\in P}\v(\x)=\v(\p)$ for any point $\p\in F$. The rest follows from Proposition \ref{prop:lattice-point-generating-function}. 
\end{proof}

\begin{lemma}\label{lem: parallelepiped formula interior}
Let $P\subset L\otimes_\Z\R$ be a rational polytope of full dimension, $F$ be a face and $C\subset C_F$ be a rational simplicial cone with integral generators $\u_1,\ldots,\u_m$. Then 
\[K^\circ_{P,C}(\base;\z):=\sum_{\v\in\Int(C)\cap L^{\vee}}\exp_{\base}(\min_{\x\in P}\v(\x-\z))=\left(\sum_{\v\in\Pi^\circ(C)\cap L^{\vee}}\base^{\v(\p{-}\z)}\right)\prod_{i=1}^m\frac{1}{1{-}\base^{\u_i(\p{-}\z)}}\]
for any point $\p\in F$. 
\end{lemma}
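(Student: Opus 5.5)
The plan is to argue exactly as in \Cref{lem: parallelepiped formula}, but with the closed cone replaced by its interior and the half-open parallelepiped $\Pi(C)$ replaced by $\Pi^\circ(C)$. First, since $\Int(C)\subset C\subset C_F$, the definition of $C_F$ gives $\min_{\x\in P}\v(\x)=\v(\p)$ for every $\v\in\Int(C)$ and any $\p\in F$. Hence each summand equals $\base^{\v(\p-\z)}=\t^{\v}$ under the formal specialization $\t^{\v}:=\base^{\v(\p-\z)}$. This specialization is multiplicative in $\v$, so $K^\circ_{P,C}(\base;\z)$ is the integer-point transform $A_{\Int(C)}(\t)$ evaluated at it. Here $\Int(C)$ means the relative interior of $C$ inside its linear span.

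The key step is a tiling statement for the relative interior. Every $\v\in\Int(C)\cap L^\vee$ decomposes uniquely as $\v=\q+\p'$ with $\q=\sum q_i\u_i$, $q_i\in\Z_{\ge0}$, and $\p'\in\Pi^\circ(C)\cap L^\vee$. To see this, write $\v=\sum\beta_i\u_i$ with all $\beta_i>0$; this uses relative interior together with simpliciality, so the $\u_i$ are linearly independent and the coordinates are unique. Set $q_i=\lceil\beta_i\rceil-1\ge 0$ and $\alpha_i=\beta_i-q_i\in(0,1]$. Then $\p'=\v-\q$ lies in $L^\vee$ because $\q\in L^\vee$. Conversely, any such sum has all coordinates positive, so it lies in $\Int(C)$. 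Uniqueness follows because each positive real number $\beta_i$ has exactly one expression as $q_i+\alpha_i$ with $q_i\in\Z_{\ge0}$ and $\alpha_i\in(0,1]$.

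Summing the geometric series over $\q$ then gives
\[A_{\Int(C)}(\t)=\Big(\sum_{\v\in\Pi^\circ(C)\cap L^\vee}\t^{\v}\Big)\prod_{i=1}^m\frac{1}{1-\t^{\u_i}},\]
and specializing yields the claimed formula. The identity can be read as an equality of formal series, or as convergent sums in the region where each $|\base^{\u_i(\p-\z)}|<1$, exactly as in \Cref{lem: parallelepiped formula}.

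As a cross-check, this agrees with \Cref{reciprocity of A_C}. The map $\v\mapsto\sum\u_i-\v$ sends $\Pi(C)\cap L^\vee$ bijectively to $\Pi^\circ(C)\cap L^\vee$, and combining this with $(-1)^m\prod(1-\t^{-\u_i})^{-1}=\t^{\sum\u_i}\prod(1-\t^{\u_i})^{-1}$ reproduces the same expression. So one could alternatively derive the lemma from Stanley's reciprocity, but the direct tiling argument is cleaner.

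The only real care point is making sure the interior is taken relative to the span of $C$ when $m<d$. Linear independence of the generators is exactly what makes the "all coordinates positive" characterization valid, so this should not be a serious obstacle.
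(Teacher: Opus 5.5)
Your proposal is correct and follows the route the paper intends. The paper gives no separate proof and treats this as the interior analogue of \Cref{lem: parallelepiped formula}: use $\Int(C)\subset C_F$ to replace $\min_{\x\in P}\v(\x)$ by $\v(\p)$, then tile the relative interior of $C$ by translates of $\Pi^\circ(C)$ (equivalently, invoke \Cref{reciprocity of A_C}), which is exactly what you do, and your decomposition via $q_i=\lceil\beta_i\rceil-1$ and your reciprocity cross-check both hold.
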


We compute $K_P(\z)$ by triangulation in the dual space where the normal fan $\N(P)$ lives.
\begin{theorem}\label{thm: sum-of-cones}
Let $P\subset L\otimes_\Z\R$ be a rational polytope of full dimension.  Let $\T$ be a complete, rational simplicial fan that refines $\N(P)$.
Then \[K_P(\base;\z)=\sum_{C\in \T}(-1)^{\mathrm{codim}(C)}K_{P,C}(\base;\z)
= \sum_{C\in \T} K^\circ_{P,C}(\base;\z).\]
In particular, $K_P(\base;\y)$ is a rational function in $y_0$ and in the variables $$\y=\exp_{\base}(\z) = (y_1 = \base^{z_1},\ldots, y_d = \base^{z_d}).$$
\end{theorem}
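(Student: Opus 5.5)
The plan is to start from the second equality, which is the cleaner one. Because $\T$ is a complete fan, $L^{\vee}\otimes_\Z\R$ is the disjoint union of the relative interiors $\Int(C)$ over all cones $C\in\T$, including the zero cone, whose relative interior is $\{\0\}$. Hence every $\v\in L^{\vee}$ lies in $\Int(C)$ for exactly one $C\in\T$. Summing the defining series of $K_P(\base;\z)$ over this partition gives
\[
K_P(\base;\z)=\sum_{C\in\T}\ \sum_{\v\in\Int(C)\cap L^\vee}\exp_{\base}\bigl(\min_{\x\in P}\v(\x-\z)\bigr)=\sum_{C\in \T}K^\circ_{P,C}(\base;\z).
\]
Since $\T$ refines $\N(P)$, each $C\in\T$ is contained in some $C_F$, so \Cref{lem: parallelepiped formula interior} applies to it. Rearranging the sum is legitimate where all these series converge absolutely. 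I will work on the open region of $\z$, namely $\z\in\Int(P)$, with $\base>1$ (or the analogous region when $\base<1$). There $\min_{\x\in P}\v(\x-\z)\le -c|\v|$ for some $c>0$, so the full sum converges absolutely. Each summand of \Cref{lem: parallelepiped formula interior} is then a rational function in $\base$ and $\y$, because $\base^{\v(\p-\z)}$ is a Laurent monomial in $\y$ times a power of $\base$ (a fractional power if $\p$ is only rational). The "in particular" statement follows from this, interpreted as an identity of rational functions that holds on that region and extends by analytic continuation.

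For the first equality, I will express $K_{P,C}$, which sums over the closed cone, as a sum of relative-interior sums over its faces. Since $C$ is the disjoint union of the relative interiors of its faces, and those faces are cones of $\T$,
\[
K_{P,C}=\sum_{D\in\T,\ D\subseteq C}K^\circ_{P,D}.
\]
Substituting this gives
\[
\sum_{C}(-1)^{\codim C}K_{P,C}=\sum_{D}K^\circ_{P,D}\sum_{C\supseteq D}(-1)^{\codim C}.
\]
So the whole step reduces to the combinatorial identity $\sum_{C\in\T,\,C\supseteq D}(-1)^{\codim C}=1$ for every $D\in\T$.

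This identity is the main obstacle. The cones of $\T$ containing $D$ correspond to the faces of the star of $D$, which forms a complete fan in the quotient $\R^{d}/\sp(D)$ of dimension $d-\dim D$. The identity then says that the alternating count of cones in a complete fan, weighted by codimension, equals $1$. I would prove this by intersecting the complete fan with a small sphere around the origin. This gives a cell decomposition of a sphere $S^{n-1}$, where $n=d-\dim D$, with the cone $D$ itself playing the role of the empty cell. Its Euler characteristic is $1+(-1)^{n-1}$. Combining this with the contribution of $D$ and tracking the signs by codimension yields exactly $1$. As a check, for $n=1$ there are two rays, each of codimension $0$, and the cone $D$ of codimension $1$, giving $2-1=1$.

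Absolute convergence holds on the same region as before, so the rearrangement in this step is justified in the same way.
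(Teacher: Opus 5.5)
Your proposal is correct and follows the paper's route: partition $L^\vee$ into the relative interiors of the cones of $\T$, then apply inclusion--exclusion. You make the paper's ``straightforward inclusion--exclusion'' explicit through the identity $\sum_{C\in\T,\,C\supseteq D}(-1)^{\codim C}=1$, proved via the Euler characteristic of the sphere. One small correction: there is no ``analogous region'' for $\base\le 1$. Since $h_{P-\z}(\v)+h_{P-\z}(-\v)>0$ for every $\v\neq\0$, the series diverges for every $\z$ in that case, so you should work only with $\base>1$, $\z\in\Int(P)$, and then extend as an identity of rational functions.
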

\begin{proof}
The equality $K_P(\base;\z)=\sum_{C\in \T} K^\circ_{P,C}(\base;\z)$ is evident, and \[K_P(\base;\z)=\sum_{C\in \T}(-1)^{\mathrm{codim}(C)}K_{P,C}(\base;\z)\] follows from a straightforward inclusion-exclusion argument. Each summand $K_{P,C}(\base;\z)$ is a rational function in the variables $\y=\exp(\z)$, and so is the overall sum $K_P(\base;\z)$. 
\end{proof}
\begin{notation}
In this paper, we always use $\z=(z_1,\ldots,z_d)$, and $\y=(y_1,\ldots,y_d)=\exp_{\base}(\z)$, as the input variables for the generating functions $K_P$. Note that, after the change of variables, the formula in Definition \ref{def:dual-lattice-point-generating-function} becomes
\begin{align}\label{eqn: K_P(y)}
    K_P (y_0; \y) = \sum_{\v\in (L')^\vee}y_0^{-h_{P-\z_0}(\v)} \y^{-\v}.
\end{align}
\end{notation} 

\begin{definition}\label{def:KPF}
For a face $F \subset P$, define
\[
K_{P,F}(\base;\z):= \sum_{\v\in C_F \cap L^{\vee}}\exp_{\base}(\min_{\x\in P}\v(\x-\z)), \quad \text{and} \quad K^\circ_{P,F}(\base;\z):= \sum_{\v\in \Int(C_F) \cap L^{\vee}}\exp_{\base}(\min_{\x\in P}\v(\x-\z)).\]
\end{definition}
We then have the alternative formula:
\begin{theorem}\label{thm: sum-of-cones-faces}
Let $P\subset L\otimes_\Z\R$ be a rational polytope of full dimension. 
Then \[K_P(y_0;\z)=\sum_{F}(-1)^{\dim(F)}K_{P,F}(y_0;\z) = \sum_F K^\circ_{P,F}(y_0;\z),
\]
where the summation is over all the faces $F$ of $P$.
\end{theorem}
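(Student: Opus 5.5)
The second equality is the easy one, and I would prove it first. For a full-dimensional polytope $P$, the normal fan $\N(P)$ is complete. The relative interiors $\Int(C_F)$, as $F$ ranges over all faces of $P$ (including $P$ itself, with $C_P = \{0\}$ and $\Int(C_P)=\{0\}$), partition $L^{\vee}\otimes_\Z\R$. Here $\Int$ means relative interior, as in \Cref{reciprocity of A_C} for non-full-dimensional cones. Consequently each $\v\in L^\vee$ lies in exactly one $\Int(C_F)$, and summing the summand $\exp_{\base}(\min_{\x\in P}\v(\x-\z))$ over this partition gives
\[K_P(\base;\z)=\sum_F K^\circ_{P,F}(\base;\z).\]
As in \Cref{thm: sum-of-cones}, I would treat this as an identity of formal sums that converge on a common region, namely $\z$ in the interior of $P$ with $\base>1$ (or $0<\base<1$ with the appropriate sign convention). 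There $\min_{\x\in P}\v(\x-\z)<0$ for $\v\neq 0$, with linear decay, so all series converge absolutely. The identity then extends to rational functions.

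For the first equality I would use the face structure of each cone: $C_F$ is the disjoint union of $\Int(C_G)$ over the faces $G\supseteq F$. Hence
\[K_{P,F}=\sum_{G\supseteq F}K^\circ_{P,G},\]
and therefore
\[\sum_F(-1)^{\dim F}K_{P,F}=\sum_G K^\circ_{P,G}\sum_{F\subseteq G}(-1)^{\dim F},\]
where the inner sum runs over the nonempty faces $F$ of $G$. By the Euler relation for the polytope $G$, including $G$ itself, this inner sum equals $1$. This gives $\sum_G K^\circ_{P,G}$, which combined with the first step finishes the proof.

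Alternatively, one could refine $\N(P)$ by a simplicial fan $\T$, apply \Cref{thm: sum-of-cones}, and group cones by the face whose dual cone contains them in its relative interior. The direct argument above is cleaner, however, and does not need simpliciality.

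The main obstacle is justifying the sums analytically rather than just formally. Each $K_{P,F}$ is a sum over a pointed cone, and by \Cref{prop:lattice-point-generating-function} and \Cref{lem: parallelepiped formula} (after triangulating $C_F$) it is a rational function, so the rearrangements are valid on the common convergence region. The only real input beyond bookkeeping is the Euler characteristic identity $\sum_{F\subseteq G}(-1)^{\dim F}=1$ for a polytope $G$. I would cite this as the Euler--Poincaré relation. For vertices, $\dim F=0$ and $C_F$ is full-dimensional, which matches the sign $(-1)^{\codim}$ in \Cref{thm: sum-of-cones}, since $\codim C_F=\dim F$.
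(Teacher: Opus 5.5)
Your proof is correct and is essentially the argument the paper intends. The paper gives no separate proof of this theorem: it treats it, like the preceding simplicial-fan version, as the evident partition of $L^\vee\otimes_\Z\R$ into relative interiors of normal cones together with a ``straightforward inclusion-exclusion,'' which you make explicit via $C_F=\bigsqcup_{G\supseteq F}\Int(C_G)$ and Euler's relation $\sum_{F\subseteq G}(-1)^{\dim F}=1$. One small correction: drop the aside about $0<y_0<1$, since for such $y_0$ the series diverges for every $\z$ (because $\min_{\x\in P}\v(\x-\z)+\min_{\x\in P}(-\v)(\x-\z)\le 0$); your region $y_0>1$, $\z\in\Int(P)$ already suffices.
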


\begin{ex}\label{ex:dim-1}
Let $L$ be the standard lattice $\mathbb{Z}$ in $\R$. Let $P=[a,b]\subset\R$ be an interval. Identify $L^{\vee}$ with $\Z$ as well. Pick $\u_1=(-1)$ and $\u_2=(1)$ as the integral generator vectors of its normal fan. Let $C_1=\R_{\leq0}$ and $C_2=\R_{\geq0}$. Then Lemma~\ref{lem: parallelepiped formula} gives
\[K_{P,C_1}(z)=\frac{1}{1-\exp(z-b)},\quad K_{P,C_2}(z)=\frac{1}{1-\exp(a-z)},\quad K_{P,C_1\cap C_2}(z)=1.\]
We then have, with $y = y_1$, by Theorem~\ref{thm: sum-of-cones}, \[K_P(z)=K_{P,C_1}(z)+K_{P,C_2}(z)-1=\frac{1-\exp(a-b)}{(1-\exp(z-b))(1-\exp(a-z))}=\frac{1-e^{a-b}}{(1-e^ay^{-1})(1-e^{-b}y)}.\]
\end{ex}

\begin{ex}\label{ex: 2-dim}
Let $L$ be the standard lattice $\mathbb{Z}^2$ in $\R^2$. Let $P$ be the convex hull of $(1,0),(0,1),(-2,-1)$ and $(0,-1)$. We pick the vectors $\u_1 = (1,-1)$, $\u_2 = (-1,-1)$, $\u_3 = (-1,1)$, and $\u_4 = (0,1)$ as the integral generator vectors of its normal fan $\mathcal{N}(P)$. See Figure~\ref{fig:dual-generating-function-example}.


We now compute $K_{P,C_1} (\z)$ using the formula from Lemma~\ref{lem: parallelepiped formula}. The only lattice points in $\Pi(C_1)$ are $(0,0)$ and $ (0,-1)$. Each corresponds to a summand in the numerator in $ K_{P,C_1} (\z)$:
\begin{align*}
 K_{P,C_1} (\z) = \frac{1 + e^{-1+z_2} }{ (1-e^{-1-z_1+z_2}) (1-e^{-1+z_1+z_2}) }.
\end{align*}

Similarly we compute the terms corresponding to $C_2, C_3, C_4$ as follows:
\[ K_{P,C_2} (\z) = \frac{1 }{ (1-e^{-1-z_1+z_2}) (1-e^{-1-z_2}) };  \quad  K_{P,C_3}(\z) = \frac{1 }{ (1-e^{-1-z_2}) (1-e^{-1+z_1-z_2}) };  \]
\[ K_{P,C_4}(\z) = \frac{1 + e^{-1+z_1}}{ (1-e^{-1+z_1-z_2}) (1-e^{-1+z_1+z_2}) }.  \]
Summing up, we obtain
\begin{eqnarray*}
K_P(\z)
&=& K_{P,C_1} + K_{P,C_2} + K_{P,C_3}  + K_{P,C_4} \\
& &  \quad - K_{P,C_1\cap C_2} - K_{P,C_2\cap C_3} - K_{P,C_3\cap C_4} - K_{P,C_1\cap C_4} +  K_{P,C_1\cap C_2 \cap C_3\cap C_4}\\
&=&  K_{P,C_1} + K_{P,C_2} + K_{P,C_3} + K_{P,C_4} \\
& & \quad - \frac{1}{1-e^{-1-z_1+z_2}} - \frac{1}{1-e^{-1-z_2}} -\frac{1}{1-e^{-1+z_1- z_2}} - \frac{1}{1-e^{-1+z_1+z_2}} +1\\
 &=& \frac{(1 - e^{-1}) \Big( 1 + e^{-1}(1 + e^{z_1} + e^{z_2}) - e^{-2}(1 + e^{z_1} + e^{z_1-z_2}) - e^{-3+z_1} \Big)}{(1-e^{-1-z_1+z_2})(1-e^{-1+z_1+z_2})(1-e^{-1-z_2})(1-e^{-1+z_1-z_2})}.
\end{eqnarray*}

\begin{figure}[h!]
\centering
\begin{tikzpicture}[scale=1.0]
\draw[fill=black,opacity=0.2](0,1)--(1,0)--(0,-1)--(-2,-1)--(0,1);
\draw[thick](0,1)--(1,0)--(0,-1)--(-2,-1)--(0,1);
\node at (0,1) {$\bullet$};
\node at (1,0) {$\bullet$};
\node at (-2,-1) {$\bullet$};
\node at (0,-1) {$\bullet$};
\node at (-2,-1) {$\bullet$};
\node[above] at (0,1) {$(0,1)$};
\node[above] at (1,0) {$(1,0)$};
\node[below] at (-2,-1) {$(-2,-1)$};
\node[below] at (0,-1) {$(0,-1)$};
\draw[thin](0,-1)--(0,2);
\draw[thin](-2,0)--(2,0);
\end{tikzpicture}
\qquad
\begin{tikzpicture}[scale=1.0]
\draw[fill=black,opacity=0.2](1,-1)--(0,1)--(-1,1)--(-1,-1)--(1,-1);
\node at (0,0) {$\bullet$};


\draw[->,thin](0,0)--(-1,1);
\draw[->,thin](0,0)--(0,1);
\draw[->,thin](0,0)--(1,-1);
\draw[->,thin](0,0)--(-1,-1);

\node at (1,1) {$C_2$};
\node at (0,-1) {$C_1$};
\node at (-0.5,1.5) {$C_3$};
\node at (-1.5,0) {$C_4$};

\node[right] at (1,-1) {$\mathbf{u_1}$};
\node[below] at (-1,-1) {$\mathbf{u_2}$};
\node[left] at (-1,1) {$\mathbf{u_3}$};
\node[above] at (0,1) {$\mathbf{u_4}$};
\end{tikzpicture}
\qquad
\begin{tikzpicture}[scale=1.0]
\draw[fill=black,opacity=0.2](1,-1)--(1,0)--(0,1)--(-1,2)--(-1,1)--(-2,0)--(0,-2)--(1,-1);
\node at (0,0) {$\bullet$};

\node at (-1,0) {$\bullet$};
\node at (0,-1) {$\bullet$};

\draw[->,thin](0,0)--(-1,1);

\draw[->,thin](0,0)--(0,1);
\draw[->,thin](0,0)--(1,-1);
\draw[->,thin](0,0)--(-1,-1);

\node at (1.5,0) {$\Pi(C_2)$};
\node at (1,-2) {$\Pi(C_1)$};
\node at (-0.4,2) {$\Pi(C_3)$};
\node at (-2.5,0) {$\Pi(C_4)$};

\node[right] at (1,-1) {$\mathbf{u_1}$};
\node[below] at (-1,-1) {$\mathbf{u_2}$};
\node[left] at (-1,1) {$\mathbf{u_3}$};
\node[above] at (0,1) {$\mathbf{u_4}$};
\end{tikzpicture}
\caption{A polytope $P$, its normal $\N(P)$, and the fundamental parallelepipeds for each maximal cone in $\N(P)$.}
\label{fig:dual-generating-function-example}
\end{figure}
\end{ex}

\begin{ex}\label{ex:3-dim}
Consider a cube $P = [-1,1]^3$ in $\R^3$. Its normal fan contains $8$ maximal cones, each being simplicial of the form $C_i = \Cone\{\u_1, \u_2, \u_3\}$, where $\u_j \in \{\e_j, -\e_j \}$ for $j = 1,2,3$. Every generating ray $\u_j$ corresponds to a factor
\[ (1- e^{-1\pm z_j}) \]
in the denominator. For example, the maximal cone that corresponds to the vertex $\v = (-1,1, -1)$ is $C_{\v} = \Cone (\e_1, -\e_2, \e_3)$, and
\[K_{P, \v}(\z) =\frac{1}{(1-e^{-1-z_1})(1-e^{-1+z_2}) (1-e^{-1-z_3})} .\]

Summing over all the cones in $\N(P)$, we get
\begin{align*}
K_P 
&= 
    \sum_{\v\; \text{vertices}} K_{P,\v} \;\quad 
    - \; \sum_{e \;\text{edges} } K_{P,e} 
    &+& \sum_{F\; 2-\text{faces}} K_{P,F}  
    &-& 1\\
&= A_1 A_2 A_3 \quad \qquad  
    -  (A_1 A_2 + A_1 A_3 + A_2 A_3) 
    &+& (A_1 + A_2 + A_3) 
    &-& 1\\
&= (A_1 - 1)(A_2 - 1)(A_3 - 1).
\end{align*}
where
\[A_j = \frac{1}{1- e^{-1 - z_j}} +\frac{1}{1- e^{-1 + z_j}} \quad \text{for } j = 1,2,3. \]
Written as one fraction:
 \[K_P(\mathbf{z}) = \frac{(1 - e^{-2})^3}{(1 - e^{-1 + z_1})(1 - e^{-1 - z_1})(1 - e^{-1 + z_2})(1 - e^{-1 - z_2})(1 - e^{-1 + z_3})(1 - e^{-1 - z_3})}.\]
In general, let $P = [a,b]\times [c,d] \times [f,g]$. Then
    \[ K_P(\mathbf{z}) = \frac{(1 - e^{a-b})(1 - e^{c-d})(1 - e^{f-g})}{(1 - e^{-b + z_1})(1 - e^{a - z_1})(1 - e^{-d + z_2})(1 - e^{c - z_2})(1 - e^{-g + z_3})(1 - e^{f - z_3})}.\]

    Notice that since $P$ is the Cartesian product of three line intervals, the function $K_P$ can be factored into 
    \[K_P(\z) = K_{[a,b]}(z_1) \cdot K_{[c,d]}(z_2) \cdot K_{[f,g]}(z_3).\]
\end{ex}

We now justify our notation of the variable $\base$. Recall some definitions. For a polytope $P$ in $L\otimes_\Z\R$, the \emph{cone over $P$} is \[C(P):=\{(t,t\x)\:|\: t\in\R_{\geq0},\ \x\in P\}\subset \R\oplus (L\otimes_\Z\R).\] For a cone $C$ in $L'\otimes_{\Z}\R$, its \emph{dual cone} is \[C^*:=\{\v\in (L')^{\vee}\otimes_{\Z}\R\:|\: \v(\z)\geq0\text{ for all }\z\in C\}.\]
\begin{proposition}\label{prop: cone boundary vs cone}

Let $P\subset L\otimes_\Z\R$ be an integral polytope of full dimension. Then \[K_P(\base;\y)=\sum_{\v'\in\partial C(P)^* \cap (\Z\oplus L)^\vee}y_0^{-v_0}y_1^{-v_1}\cdots y_d^{-v_d} = (1-y_0^{-1})\sum_{\v' \in C(P)^* \cap (\Z \oplus L)^\vee}y_0^{-v_0}y_1^{-v_1}\cdots y_d^{-v_d}.\]
\end{proposition}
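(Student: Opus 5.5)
Throughout, fix $\z_0=\0$ (the lattice origin), so $L'=L$ and \eqref{eqn: K_P(y)} reads $K_P(\base;\y)=\sum_{\v\in L^\vee}y_0^{-h_P(\v)}\y^{-\v}$. The plan is to describe $C(P)^*$ and its boundary explicitly, then match lattice points.

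\textbf{Step 1: description of the dual cone.} A functional $\v'=(v_0,\v)$ on $\R\oplus(L\otimes_\Z\R)$ lies in $C(P)^*$ iff $v_0+\v(\x)\ge 0$ for all $\x\in P$, i.e.\ iff $v_0\ge h_P(\v)$. Since $P$ is full-dimensional, $C(P)$ is pointed and full-dimensional, so $C(P)^*$ is too. Its boundary is the set where some point $(1,\x)\in C(P)$ with $\x\in P$ satisfies $v_0+\v(\x)=0$; that is,
\[
\partial C(P)^*=\{(v_0,\v)\:|\: v_0=h_P(\v)\}.
\]
This is the graph of $h_P$ over all of $L^\vee\otimes\R$.

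\textbf{Step 2: first equality.} Because $P$ is integral, its vertices lie in $L$, and $h_P(\v)=-\min_{\text{vertices}}\v(\x)\in\Z$ whenever $\v\in L^\vee$. Hence the map $\v\mapsto(h_P(\v),\v)$ is a bijection from $L^\vee$ onto $\partial C(P)^*\cap(\Z\oplus L)^\vee$. Summing $y_0^{-v_0}\y^{-\v}$ over this set gives exactly $K_P(\base;\y)$.

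\textbf{Step 3: second equality.} The lattice points of $C(P)^*$ are
\[
\{(h_P(\v)+k,\v)\:|\:\v\in L^\vee,\ k\in\Z_{\ge0}\},
\]
again using integrality of $h_P(\v)$. So
\[
\sum_{C(P)^*}=\sum_{\v}y_0^{-h_P(\v)}\y^{-\v}\sum_{k\ge0}y_0^{-k}=\frac{K_P(\base;\y)}{1-y_0^{-1}}.
\]
This holds formally, or as convergent series for $y_0>1$ in the region where $K_P$ converges, and then as an identity of rational functions via \Cref{thm: sum-of-cones}. Multiplying by $(1-y_0^{-1})$ gives the claim.

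\textbf{Main obstacle.} The only delicate point is justifying the boundary description and the role of integrality. If $P$ were merely rational, $h_P(\v)$ would be non-integral for some $\v$, the graph would miss lattice points, and the fibers over $\v$ would start at $\lceil h_P(\v)\rceil$. Integrality is exactly what makes both bijections work. Convergence issues are handled by treating everything as formal Laurent series summed over cones, consistent with the rational-function interpretation already established.
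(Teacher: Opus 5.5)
Your proposal is correct and follows the paper's proof. You identify $\partial C(P)^*$ with the graph $v_0=h_P(\v)$ for the first equality, then sum the geometric series over $v_0\ge h_P(\v)$ in each fiber for the second; your explicit use of integrality (so that $h_P(\v)\in\Z$ for $\v\in L^\vee$) and your convergence remark make precise two points the paper leaves implicit.
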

\begin{proof}
We start with the first equality. A point $\v'=v_0\oplus\v$ is on the boundary $\partial C(P)^*$ if and only if it satisfies $\min_{\x'\in C(P)} \v'(\x')=0$ where the minimum is attained by at least one point $\x' = (t,t\x) \in C(P) \setminus \{0\}$. This is equivalent to $v_0+\min_{\x\in P}\v(\x)=0$, i.e., $v_0=h_P(\v)$. Plugging in $v_0$, this matches with (\ref{eqn: K_P(y)}).

For the second equality, we are summing over the entire dual cone $C(P)^*$, so $v_0$ takes every integer value greater than or equal to $h_P(\v)$. 
\begin{align*} 
\sum_{\v' = (v_0,\v) \in C(P)^* \cap(\Z \oplus L)^\vee}y_0^{-v_0} \y^{-\v}
&= \sum_{\v\in (L')^\vee} \y^{-\v}\Big(\sum_{v_0=h_P(\v)}^\infty y_0^{-v_0} \Big)\\
&=  \frac{1}{1-y_0^{-1}}\sum_{\v\in (L')^\vee} \y^{-\v} y_0^{-h_P(\v)}\\
&= \frac{1}{1-y_0^{-1}} K_P(y_0; \y)\\
\end{align*}

\end{proof}

\begin{ex}
    Let $P = [1,2]$ in $\R$ and let $L$ be the standard lattice. The dual cone $C(P)^*$ is generated by $(-1,1)$ and $(2,-1)$. We can see in Figure \ref{fig:dual cone vs boundary example} that every horizontal (in $v_0$ direction) fiber of $C(P)^*$ is generated by a lattice point on the boundary $\partial C(P)^*$.

\begin{figure}[h!]
\centering

\begin{tikzpicture}[scale=1.0]
\node[right] at (1,1) {$1$};
\node[right] at (1,2) {$2$};
\node at (1,1) {$\bullet$};
\node at (1,2) {$\bullet$};
\node[left] at (1,1.5) {$P$};
\draw[thin](1,1)--(1,2);
\end{tikzpicture}
\qquad\qquad\qquad\qquad\qquad
\begin{tikzpicture}[scale=1.0]
\node[right] at (1,-0.5) {$-\frac{1}{2}$};
\node at (1,-0.5) {$\bullet$};
\node[left] at (1,0.5) {$P^\vee$};
\draw[thin](1,-0.5)--(1,2);
\end{tikzpicture}

\begin{tikzpicture}[scale=1.0]
\node at (0,0) {$\bullet$};
\node[below] at (1,1) {$(1,1)$};
\node[left] at (1,2) {$(1,2)$};
\draw[->,thick](0,0)--(2,2);
\draw[->,thick](0,0)--(2,4);
\draw[fill=black,opacity=0.2](0,0)--(2.5,2.5)--(2.5,5)--(0,0);
\end{tikzpicture}
\qquad
\begin{tikzpicture}[scale=1.0]
\node at (0,0) {$\bullet$};
\node at (1,0) {$\bullet$};
\node at (2,-1) {$\bullet$};
\node at (-1,1) {$\bullet$};
\node at (0,1) {$\bullet$};
\node at (1,1) {$\bullet$};
\node at (-2,2) {$\bullet$};
\node at (-1,2) {$\bullet$};
\node at (0,2) {$\bullet$};
\node at (2,0) {$\bullet$};

\node at (1,2) {$\dots$};
\node at (2,1) {$\dots$};
\node at (3,0) {$\dots$};
\node at (3,-1) {$\bullet$};
\node at (4,-1) {$\dots$};

\draw[->,thin](-2,0)--(4,0);
\node[below] at (4,0) {$v_0$};

\draw[->,thin](0,-2)--(0,3);
\node[right] at (0,3) {$v_1$};

\draw[->,thin, blue](-2,2)--(1,2);
\draw[->,thin, blue](-1,1)--(2,1);
\draw[->,thin, red](2,-1)--(4,-1);

\node[left] at (-1,1) {$(-1,1)$};
\node[below] at (2,-1) {$(2,-1)$};
\draw[->,thick](0,0)--(-1,1);
\draw[->,thick](0,0)--(3,-1.5);
\draw[fill=black,opacity=0.2](0,0)--(-2,2)--(3,-1.5)--(0,0);
\end{tikzpicture}
\caption{A polytope $P$, its polar dual $P^\vee$, the polytopal cones $C(P)$ and $C(P)^*$.}
\label{fig:dual cone vs boundary example}
\end{figure}

We have
\begin{align*}
    \sum_{\v' \in C(P)^*} y_0^{-v_0} y_1^{-v_1} &= \sum_{v_1 \in \Z} \sum_{v_0 = h_P(v_1)}^\infty y_0^{-v_0} y_1^{-v_1} \\
    &= \sum_{v_1 \leq 0} \sum_{v_0 = -2v_1}^\infty y_0^{-v_0} y_1^{-v_1} + \sum_{v_1 \geq 0} \sum_{v_0 = -v_1}^\infty y_0^{-v_0} y_1^{-v_1} -\frac{1}{1-y_0^{-1}}\\
    &= \frac{1}{1-y_0^{-1}} \underbrace{\Big(\sum_{v_1 \leq 0} y_0^{2v_1} y_1^{-v_1} + \sum_{v_1 \geq 0} y_0^{v_1} y_1^{-v_1} -1\Big)}_{= \sum_{\v'\in \partial C(P)^*}y_0^{-v_0} y_1^{-v_1}}\\
    &= \frac{1}{1-y_0^{-1}} \Big(  \frac{1}{1-y_0^{-2}y_1} + \frac{1}{1-y_0 y_1^{-1}} -1 \Big)\\
    &= \frac{1}{1-y_0^{-1}} K_{[1,2]}(y_0; y_1)
\end{align*}
by comparing with the formula in Example \ref{ex: 2-dim}.
\end{ex}

Comparing Proposition \ref{prop:lattice-point-generating-function} with the definition of the integer-point transform in \ref{eq: Def of A_c}, we see that the dual lattice function of $P$ and the integer-point transform of $C(P)^*$ determine each other.

\begin{cor}\label{cor: K_P vs A_C}
Let $P\subset L\otimes_\Z \R$ be in integral polytope of full dimension, then
\[K_P(y_0,\y) = (1-y_0^{-1})A_{C(P)^*} (y_0^{-1}, \y^{-1}). \]
\end{cor}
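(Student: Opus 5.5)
This corollary is a direct reformulation of the second equality in Proposition~\ref{prop: cone boundary vs cone}. The plan is to match the sum appearing there with the definition \eqref{eq: Def of A_c} of the integer-point transform.

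Apply \eqref{eq: Def of A_c} to the cone $C=C(P)^*$, which lives in $(\R\oplus(L\otimes_\Z\R))^\vee$ and has lattice $(\Z\oplus L)^\vee$. For $\t=(t_0,t_1,\dots,t_d)$ we get
\[A_{C(P)^*}(\t)=\sum_{\v'=(v_0,\v)\in C(P)^*\cap(\Z\oplus L)^\vee} t_0^{v_0}t_1^{v_1}\cdots t_d^{v_d}.\]
Substituting $\t=(y_0^{-1},\y^{-1})$, meaning $t_0=y_0^{-1}$ and $t_i=y_i^{-1}$ for $1\le i\le d$, turns each summand into $y_0^{-v_0}y_1^{-v_1}\cdots y_d^{-v_d}$. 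This is exactly the summand in the right-hand expression of Proposition~\ref{prop: cone boundary vs cone}, so
\[(1-y_0^{-1})\,A_{C(P)^*}(y_0^{-1},\y^{-1})=(1-y_0^{-1})\sum_{\v'\in C(P)^*\cap(\Z\oplus L)^\vee}y_0^{-v_0}\y^{-\v}=K_P(y_0;\y).\]

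The main point needing care is convergence. These identities hold as equalities of rational functions, with the series converging on a common nonempty open region of $(y_0,\y)$, namely where all the geometric series in Proposition~\ref{prop:lattice-point-generating-function} converge. Concretely, we need $|y_0|>1$ large relative to $\y$, so that $|\t^{\u_i}|<1$ for each generator $\u_i$ of a simplicial subdivision of $C(P)^*$. Such a region exists because $C(P)^*$ is pointed, which holds since $C(P)$ is full-dimensional when $P$ is full-dimensional.

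We also need the lattice identification to be consistent. Since $P$ is full-dimensional, we have $L'=L$, so $(L')^\vee=L^\vee$ and the lattice used in Proposition~\ref{prop: cone boundary vs cone} is the same as the one used for $A_{C(P)^*}$.

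Once these two points are in place, the identity holds by uniqueness of rational continuation, and the corollary follows.
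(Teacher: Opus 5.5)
Your proposal is correct and is essentially the paper's argument: the corollary is just the second equality of Proposition~\ref{prop: cone boundary vs cone}, rewritten via the definition \eqref{eq: Def of A_c} of the integer-point transform evaluated at $\t=(y_0^{-1},\y^{-1})$. One small correction to your convergence remark: the common region of absolute convergence is $\{(\log|y_0|,\log|\y|)\in\Int C(P)\}$, that is, $|y_0|>1$ and $\log|\y|\in\log|y_0|\cdot\Int(P)$, so ``$|y_0|$ large relative to $\y$'' describes it only when $\mathbf{0}\in\Int(P)$; your argument only needs this region to be nonempty, and that holds exactly as you say.
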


Together with Corollary \ref{reciprocity of A_C}, we have

\begin{cor}\label{cor: interior of cone}
    \[A_{\Int(C(P)^*)}(y_0, \y) = \frac{(-1)^{d+1}}{1-y_0^{-1}} K_{P}(y_0; \y).\]
\end{cor}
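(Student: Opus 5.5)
Combining Corollary~\ref{cor: K_P vs A_C} with Stanley's reciprocity (Corollary~\ref{reciprocity of A_C}) applied to the cone $C=C(P)^*$ should give the statement. First, $C(P)^*$ lives in $(\Z\oplus L)^\vee\otimes\R$, which has dimension $d+1$. Since $P$ is full-dimensional and bounded, $C(P)$ is a full-dimensional pointed cone, so $C(P)^*$ is also full-dimensional and pointed, of dimension $m=d+1$. This fixes the sign $(-1)^{d+1}$.

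Corollary~\ref{reciprocity of A_C} is stated for simplicial cones, whereas $C(P)^*$ is in general not simplicial. I would therefore first extend reciprocity to an arbitrary full-dimensional rational pointed cone $C$ of dimension $m$. Triangulate $C$ by a rational simplicial fan $\T$ without new rays. The interior of $C$ is the disjoint union of the relative interiors of those cones $\sigma\in\T$ that are not contained in $\partial C$. Hence $A_{\Int(C)}(\t)=\sum_{\sigma\not\subset\partial C}(-1)^{\dim\sigma}A_\sigma(\t^{-1})$, using simplicial reciprocity for each $\sigma$ in its own span. On the other side, $A_C(\t)=\sum_{\sigma\in\T}A_{\Int\sigma}(\t)$, where $A_{\Int\sigma}$ means the relative-interior transform. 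We then need an identity of rational functions $\sum_{\sigma\not\subset\partial C}(-1)^{\dim\sigma}A_\sigma=(-1)^m\sum_\sigma A_{\Int\sigma}$. This is the standard Euler-characteristic/Möbius inversion over the face poset of a triangulated ball, and it is the same inclusion–exclusion used in Theorem~\ref{thm: sum-of-cones}. This extension is the main obstacle, and I would cite \cite{Stanley75,BR} for the general form rather than redo it.

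With $A_{\Int(C)}(\t)=(-1)^{d+1}A_C(\t^{-1})$ in hand, set $\t=(y_0,\y)$. Then $A_{C(P)^*}(y_0^{-1},\y^{-1})$ appears, and by Corollary~\ref{cor: K_P vs A_C} it equals $K_P(y_0;\y)/(1-y_0^{-1})$. Substituting gives exactly the claimed formula. Throughout, all identities are understood as equalities of rational functions, i.e.\ after meromorphic continuation, as in the preceding results.
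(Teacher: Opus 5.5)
Your proposal is correct and follows the paper's route: the paper gets the statement simply by combining Corollary~\ref{cor: K_P vs A_C} with Stanley's reciprocity (Corollary~\ref{reciprocity of A_C}), applied to the $(d+1)$-dimensional pointed cone $C(P)^*$. Your extra step, extending reciprocity from simplicial to arbitrary pointed rational cones via a triangulation and the Euler-characteristic count over the triangulated ball, is valid and supplies a point the paper uses without comment, since $C(P)^*$ need not be simplicial.
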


As shown in Proposition \ref{prop: cone boundary vs cone}, although by definition our $K_P$ function is a generating function on the boundary of the dual cone $C(P)^*$, it actually knows all the lattice points in the entire cone. Corollary \ref{cor: interior of cone} shows that, if we invert the input from $(y_0^{-1}, \y^{-1})$ to $(y_0; \y)$, $K_P$ outputs the generating function on the interior lattice points. The reciprocity here is a dual version of the Ehrhart-Macdonald Reciprocity, making $K_P$ a natural generating function to study for those interested in studying the dual Ehrhart theory. 

\subsection{Relation to Ehrhart series}
Let $Q \subset \R^d$ be a rational polytope.  The \emph{Ehrhart function}
$$
L(Q;m) := |mQ \cap \Z^d|
$$
of $Q$ is a quasipolynomial, known as the \emph{Ehrhart quasipolynomial} of $Q$.  The Ehrhart quasipolynomial has period dividing $r$, where $r \in \Z_{>0}$ is the minimal positive integer such that $rQ$ is a lattice polytope.  The Ehrhart series of $Q$ is defined to be
$$
E(Q;t):= 1 + \sum_{m \geq 1} L(Q;m) t^m = \frac{h^*_\Z(Q;t)}{(1-t^r)^{\dim Q + 1}}.
$$
Here, $h^*_\Z(Q;t)$ is known as the \emph{$h^*$-polynomial} of $Q$.

The following proposition states the connection between our $K_P$ function and the Ehrhart quasipolynomial (and $h^*$-polynomial) of the dual polytope. In general, it is difficult to determine these functions in Ehrhart theory for a rational polytope $Q$. In the case that $Q=P^\vee$ is a lattice polytope, the proposition could provide an easier way for computation, since the computation of $K_P$ only depends on lattice points.

\begin{proposition}\label{prop:Ehrhart}
Let $P$ be a lattice polytope containing the origin in its interior.  We have 
$$
E(P^\vee;t) = \frac{1}{1-t}K_P(1/t;1,1,\ldots,1).
$$
\end{proposition}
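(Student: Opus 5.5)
Our plan is to compute $K_P(1/t;1,\ldots,1)$ directly from the definition and regroup the sum over $\v$ by the value of the support function. Since $P$ is full-dimensional, $L'=L=\Z^d$ and we may take $\z_0=\mathbf 0$ (the origin lies in $P$). Setting $y_0=1/t$ and $\y=(1,\ldots,1)$ in \eqref{eqn: K_P(y)} then gives
\[
K_P(1/t;1,\ldots,1)=\sum_{\v\in L^\vee} t^{h_P(\v)} .
\]
Since $P$ is a lattice polytope and $\v$ is integral, $h_P(\v)=-\min_{\x\in P}\v(\x)$ is attained at a vertex, so it is a nonnegative integer. It is nonnegative because $\mathbf 0\in P$, and it vanishes only for $\v=\mathbf 0$ because $\mathbf 0$ lies in the interior of $P$.

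The key step is to identify $h_P$ with the gauge of the polar. We take the convention $P^\vee=\{\v : \v(\x)\ge -1 \ \forall \x\in P\}=\{\v: h_P(\v)\le 1\}$. If the paper's convention uses $\v(\x)\le 1$ instead, replace $\v$ by $-\v$; this is a bijection of $L^\vee$, so the count is unaffected. Because $h_P$ is positively homogeneous, for each integer $m\ge 1$ we get
\[
mP^\vee\cap L^\vee=\{\v\in L^\vee : h_P(\v)\le m\}.
\]
Hence $L(P^\vee;m)=\#\{\v\in L^\vee: h_P(\v)\le m\}$. For $m=0$ this count equals $1$, consistent with the constant term $1$ in $E(P^\vee;t)$. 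Also, since $P^\vee$ is bounded, each level set $\{h_P(\v)=k\}$ is finite, so the formal series $\sum_{\v} t^{h_P(\v)}$ is well defined.

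Now write $a_k=\#\{\v\in L^\vee : h_P(\v)=k\}$, so that $K_P(1/t;\mathbf 1)=\sum_{k\ge0}a_kt^k$ and $L(P^\vee;m)=\sum_{k\le m}a_k$. Multiplying by $\frac1{1-t}=\sum_{j\ge0}t^j$ produces the partial sums:
\[
\frac{1}{1-t}\sum_{k\ge0}a_k t^k=\sum_{m\ge0}\Big(\sum_{k\le m}a_k\Big)t^m=\sum_{m\ge 0}L(P^\vee;m)t^m=E(P^\vee;t).
\]
This is the claimed identity as formal power series. Both sides are rational functions (the left by \Cref{thm: sum-of-cones}), so the identity also holds as rational functions.

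The main point needing care is the analytic versus formal interpretation. $K_P$ is defined as a sum that converges only for suitable $y_0$ (here $|t|<1$), and then it agrees with its rational-function expression from \Cref{thm: sum-of-cones}. The specialization $\y=\mathbf 1$ could in principle hit a pole of the rational function coming from the cone formulas. The denominators there are $1-y_0^{-h_P(\u_i)}\y^{-\u_i}$, which at $\y=\mathbf 1$ become $1-t^{h_P(\u_i)}$ with $h_P(\u_i)>0$, so there is no problem for $|t|<1$. We would check this explicitly to justify evaluating directly, and otherwise treat the argument as pure bookkeeping.
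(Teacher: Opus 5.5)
Your proof is correct and is essentially the paper's argument with the cone $C(P)^*$ unpacked. The paper obtains $E(P^\vee;t)=A_{C(P)^*}(t,1,\ldots,1)$ by slicing $C(P)^*$ at $v_0=m$, then applies Corollary~\ref{cor: K_P vs A_C}, whose proof sums the geometric series over $v_0\ge h_P(\v)$ for each fixed $\v$. Both arguments are therefore the same double count of the lattice pairs $(m,\v)$ with $m\ge h_P(\v)$.

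Your polar convention $P^\vee=\{\v : h_P(\v)\le 1\}$ agrees with the paper's, since it is the slice $v_0=1$ of $C(P)^*$. Your remarks on convergence for $|t|<1$ and on the denominators $1-t^{h_P(\u_i)}$ add a justification the paper leaves implicit.
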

\begin{proof}
Suppose that $P$ is a lattice polytope containing the origin in its interior.  Then the slice $\{v_0 = 1\} \cap C(P)^*$ of the dual cone $C(P)^*$ is exactly the polar polytope $P^\vee$.  Similarly, the slice $\{v_0 = t\} \cap C(P)^*$ is exactly the dilation $t(P^\vee)$. Summing the lattice points on each slice is the same as the sum over the entire cone. Therefore
\[ E(P^\vee ,t) = \sum_{(v_0,\v) = \v' \in C(P)^*} t^{v_0} = A_{C(P)^*} (t,1,1,\dots,1). \]
By Corollary \ref{cor: K_P vs A_C}, this implies
\[ E(P^\vee ,t) = \frac{1}{1-t}  K_P(1/t; 1,1,\dots, 1). \qedhere \]
\end{proof}

\begin{example}
Let us consider the quadrilateral $P$ from \cref{ex: 2-dim}.  We have
$$
K_P(y_0,\y) = 
\frac{(1 - y_0^{-1}) \Big( 1 + y_0^{-1}(1 + y_1 + y_2) - y_0^{-2}(1 + y_1 + y_1/y_2) - y_0^{-3}y_1 \Big)}{(1-y_0^{-1}y_1^{-1}y_2)(1-y_0^{-1}y_1y_2)(1-y_0^{-1}y_2^{-1})(1-y_0^{-1}y_1y_2^{-1})}.
$$
By \cref{prop:Ehrhart}, we have
$$
E(P^\vee;t)= \frac{1}{1-t}\frac{(1 - t) \Big( 1 + 3t - 3t^2 - t^3 \Big)}{(1-t)^4}
= \frac{t^2+4t+1}{(1-t)^3}.$$
Indeed, $P^\vee$ is the lattice quadrilateral with vertices $(-1,1),(-1,-1),(1,-1),(0,1)$ and Ehrhart polynomial $L(P^\vee,t) = 3t^2+3t+1$.  This agrees with the general result that the Ehrhart and $h^*$-polynomials are related by the formula
$$
L(Q;t) = \sum_{i=0}^d h_i \binom{t-i+d}{d}
$$
where $h^*_{\Z}(Q;t) = h_0+h_1t+ \cdots + h_d t^d$.
\end{example}

\subsection{Relation to the dual volume function}
Recall from \cite[Definition 2.5]{GLX} the definition of dual volume functions.
\begin{definition}\label{def:dual-volume-function}
Let $P\subset L\otimes_\Z\R$ be a rational polytope of full dimension. Let $\T=\{C_1,\ldots,C_N\}$ be any triangulation of $\N({P})$ into full-dimensional simplicial cones. Each extremal ray in $\T$ gives an integral generator $\u_i$. The \emph{dual volume function} of $P$ is \[\VolC_{\z}(P):=\sum_{C=\mathrm{span}_{\R_{\geq0}}(\u_{j_1},\ldots,\u_{j_d})\in\T}\frac{\det|\u_{j_1},\ldots,\u_{j_d}|}{h_{P-\z}(\u_{j_1})\cdots h_{P-\z}(\u_{j_d})}.\]
\end{definition}
\begin{theorem}\label{thm:limit-to-VolC}
Let $P\subset L\otimes_\Z\R$ be a rational polytope of full dimension $d$. Then \[\lim_{\epsilon\rightarrow0}\frac{\epsilon^d}{d!} K_{P}(e^{\epsilon};\z)=\VolC_{\z}(P).\]
\end{theorem}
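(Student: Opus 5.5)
The plan is to compute the limit cone by cone, using \Cref{thm: sum-of-cones} with a simplicial refinement. Fix a complete rational simplicial fan $\T$ refining $\N(P)$; we may take the one whose maximal cones form the triangulation $\{C_1,\ldots,C_N\}$ used in \Cref{def:dual-volume-function}. We work at a point $\z$ where all the quantities $h_{P-\z}(\u_j)$ are nonzero, for instance $\z$ in the interior of $P$; the identity then extends as an identity of rational functions. By \Cref{thm: sum-of-cones} with $\base=e^{\epsilon}$,
\[
K_P(e^\epsilon;\z)=\sum_{C\in\T}(-1)^{\codim C}K_{P,C}(e^\epsilon;\z),
\]
and each summand is given explicitly by \Cref{lem: parallelepiped formula}.

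First, fix a cone $C\in\T$ of dimension $m$ with generators $\u_1,\ldots,\u_m$, and let $\p$ lie in the face $F$ with $C\subset C_F$. Then $\u_i(\p-\z)=\min_{\x\in P}\u_i(\x-\z)=-h_{P-\z}(\u_i)$, so \Cref{lem: parallelepiped formula} gives
\[
K_{P,C}(e^\epsilon;\z)=\Big(\sum_{\v\in\Pi(C)\cap L^\vee}e^{\epsilon\v(\p-\z)}\Big)\prod_{i=1}^m\frac{1}{1-e^{-\epsilon h_{P-\z}(\u_i)}}.
\]
As $\epsilon\to0$, the numerator tends to $|\Pi(C)\cap L^\vee|$. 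Each denominator factor satisfies
\[
1-e^{-\epsilon h_i}=\epsilon h_i+O(\epsilon^2),\qquad h_i:=h_{P-\z}(\u_i).
\]
Hence $K_{P,C}(e^\epsilon;\z)=\epsilon^{-m}\,|\Pi(C)\cap L^\vee|\,/\prod_i h_i+O(\epsilon^{-m+1})$.

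Second, multiply by $\epsilon^d$. For every cone with $m<d$ the contribution tends to $0$, so only the maximal cones survive, and these all carry sign $+1$. For a full-dimensional simplicial cone, the half-open parallelepiped $\Pi(C)$ tiles $L^\vee\otimes\R$ by translates in the lattice generated by the $\u_i$. Therefore $|\Pi(C)\cap L^\vee|=[L^\vee:\bigoplus\Z\u_i]=\det|\u_{j_1},\ldots,\u_{j_d}|$. This yields
\[
\lim_{\epsilon\to0}\epsilon^dK_P(e^\epsilon;\z)=\sum_{C\in\T}\frac{\det|\u_{j_1},\ldots,\u_{j_d}|}{h_{P-\z}(\u_{j_1})\cdots h_{P-\z}(\u_{j_d})}.
\]

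The main obstacle is the normalization relative to \Cref{def:dual-volume-function}. Each summand $\det/\prod h$ is $d!$ times the Euclidean volume of the simplex $\conv(0,\u_{j_1}/h_{j_1},\ldots,\u_{j_d}/h_{j_d})$, so the sum is $d!\,\Vol((P-\z)^\vee)$. The factor $1/d!$ in the statement therefore matches precisely when $\VolC_\z(P)$ is normalized as the honest volume of the polar polytope, as in \cite{GLX}. I will have to check this convention against the displayed formula carefully: the formula as written lacks the $1/d!$, and if it is taken literally the correct constant in front of the limit would be $\epsilon^d$ rather than $\epsilon^d/d!$. Everything else is routine, namely that the error terms are uniform (there are finitely many cones) and the independence of the choice of triangulation, which is already known for both sides.
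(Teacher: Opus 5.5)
Your route is the paper's: decompose by \Cref{thm: sum-of-cones} over a simplicial refinement, evaluate each cone with \Cref{lem: parallelepiped formula}, note that cones of dimension $m<d$ are $O(\epsilon^{-m})$ and vanish after multiplying by $\epsilon^d$, and use $|\Pi(C)\cap L^\vee|=\det|\u_{j_1},\ldots,\u_{j_d}|$ on the maximal cones. Your first-order expansion in place of L'H\^opital is cosmetic. Restricting to $\z\in\Int(P)$ is also unnecessary, since the expansion holds verbatim whenever every $h_{P-\z}(\u_j)\neq 0$.

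Your normalization worry is correct, and it should be settled against the printed constant rather than by appeal to a convention in \cite{GLX}. The last limit in the paper's proof drops a factor, because
\[
\lim_{\epsilon\to0}\frac{\epsilon^d\sum_{\v\in\Pi(C)\cap L^\vee}e^{\epsilon\v(\p-\z)}}{d!\prod_{i=1}^d\bigl(1-e^{-\epsilon h_{P-\z}(\u_i)}\bigr)}=\frac{|\Pi(C)\cap L^\vee|}{d!\prod_{i=1}^d h_{P-\z}(\u_i)}.
\]
So both arguments actually prove $\lim_{\epsilon\to0}\epsilon^d K_P(e^\epsilon;\z)=\VolC_\z(P)$, with $\VolC$ as displayed in \Cref{def:dual-volume-function}.

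The paper's normalization of $\VolC$ is unambiguously the one without $1/d!$. It is the integral $\int_{\R^d}\exp(-h_{P-\z})$ of \Cref{rmk:Laplace}, and it is the coefficient of the canonical form $\Omega_P$ in the introduction. Hence it equals $d!\,\Vol((P-\z)^\vee)$ for $\z\in\Int(P)$, not the honest polar volume.

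You can confirm this on \Cref{ex:3-dim}. For $P=[-1,1]^3$ one has $\epsilon^3K_P(e^\epsilon;\z)\to\prod_{j=1}^3\frac{2}{1-z_j^2}$. This is exactly the displayed $\VolC_\z(P)$, summed over the eight unimodular cones. So the $1/d!$ in the statement is a misprint, and your computation with $\epsilon^d$ in place of $\epsilon^d/d!$ is the correct theorem. Equivalently, $\lim\frac{\epsilon^d}{d!}K_P(e^\epsilon;\z)=\Vol((P-\z)^\vee)$ for $\z\in\Int(P)$.
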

\begin{proof}
Let $\T$ be a complete, rational simplicial fan that refines $\N(P)$ (including the lower dimensional pieces) as in Theorem~\ref{thm: sum-of-cones}, which writes $K_{P}(\base;\z)$ as a sum over cones in $\T$. Let $C\in\T$ be of dimension $m\leq d$ with integral generators $\u_1,\ldots,\u_m$. A straightforward calculation from calculus shows that \begin{equation}\label{eq:limd}\left(\frac{\partial^m}{\partial \epsilon^m}\prod_{i=1}^m(1-e^{-\epsilon A_i})\right)\Bigg|_{\epsilon=0}=m!A_1A_2\cdots A_m,\quad \left(\frac{\partial^k}{\partial \epsilon^k}\prod_{i=1}^m(1-e^{-\epsilon A_i})\right)\Bigg|_{\epsilon=0}=0\text{ for }k<m.\end{equation}
Suppose that $\dim(C) = m < d$.
By Lemma~\ref{lem: parallelepiped formula}, the numerator of $\frac{\epsilon^d}{d!}K_{P,C}(e^{\epsilon};\z)$ is equal to $\epsilon^d\cdot\sum_{\v\in\Pi(C)\cap L^{\vee}}e^{\epsilon\v(\p-\z)}$, and evaluates to $0$ when taking the $m$-th derivative and then setting $\epsilon$ to $0$. By \eqref{eq:limd} taking the $m$-th derivative of the denominator and then setting $\epsilon$ to $0$ is nonzero.  By L’H\^opital’s Rule, we conclude that $\lim_{\epsilon \to 0} \frac{\epsilon^d}{d!}K_{P,C}(e^{\epsilon};\z) = 0$. 

Now suppose that $C$ is full-dimensional, i.e. $m=d$, by L’H\^opital’s Rule, \[\lim_{\epsilon\rightarrow0}\frac{\epsilon^d}{d!}K_{P,C}(e^{\epsilon};\z)=\lim_{\epsilon\rightarrow0}\frac{\epsilon^d\sum_{\v\in\Pi(C)\cap L^{\vee}}(e^{\epsilon \v(\p-\z)})}{d!\prod_{i=1}^d(1-e^{-\epsilon h_{P-\z}(\u_i)})}=\frac{|\Pi(C)\cap L^{\vee}|}{\prod_{i=1}^d h_{P-\z}(\u_i)}.\]
As $|\Pi(C)\cap L^{\vee}|=\det|\u_1,\ldots,\u_d|$, this is exactly the contribution of $C$ towards $\VolC_{\z}(P)$. The result follows. 
\end{proof}

\begin{remark}\label{rmk:Laplace}
The analogy here between \keydef and dual volume function is that of Laplace transform and discrete Laplace transform. Let $f$ be a function on $\R^d$. The \emph{Laplace transform} $\mathcal{L}(f)$ is defined by \[\mathcal{L}(f)(\z):=\int_{\R_{>0}^d}f(\x)\exp(-\langle\x,\z\rangle)d^d\x\]
and the \emph{discrete Laplace transform} is defined by \[\mathscr{L}(f)(\z):=\sum_{\x\in\Z^d_{\geq0}}f(\x)\exp(-\langle\x,\z\rangle).\]
Theorem~2.13 of \cite{GLX} provides a formula of the dual volume function $\VolC_{\z}(P)$ as a sum of the Laplace transform of the function $\exp(-h_P)$ over cones in $\N(P)$: \[\VolC_{\z}(P)=\int_{\R^d}\exp(-h_{P}(\x)-\langle\x,\z\rangle)d^d\x,\]
whereas our main definition (Definition~\ref{def:dual-lattice-point-generating-function}) is the discrete analogue. See \cite{lam2025matroidsamplitudes} for a discussion of these two transforms in the setting of scattering amplitudes. 
\end{remark}


\section{Properties of $K_P$}\label{sec:properties}
We establish several properties of the function $K_P$, with the eventual goal of showing that the rational differential form $\Theta_P:=K_P(\y)\omega_{T_P}$ is the canonical form of $\exp(P)$, which is the image of $P$ under the exponential function $\exp$, to be discussed in detail in Section~\ref{sec:toric}.
\subsection{The valuative property}\label{sub:valuation}
The following theorem shows that $K_P$ is a valuation, and in particular, the lower-dimensional terms do not contribute in the equality.

\begin{theorem}\label{thm:valuation}
Let $P_1,P_2,\ldots,P_r$ be polytopes such that $P_1,\ldots,P_a$ are full-dimensional, while the remaining $r-a$ polytopes are lower dimensional.  Suppose that $$\sum_{i=1}^r \alpha_i [P_i] = 0$$ in the polytope algebra. Then \[\sum_{i=1}^a \alpha_i K_{P_i}(y_0; \z) =0.\]
\end{theorem}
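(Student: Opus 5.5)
Since $K_P$ is built from cone data on the normal fan, I would go through the dual side, via the cone $C(P)$ and Brion-type identities. By Proposition~\ref{prop: cone boundary vs cone} and Corollary~\ref{cor: interior of cone}, for a full-dimensional $P$ we have, up to the fixed factor $(-1)^{d+1}(1-y_0^{-1})$, that $K_P(y_0;\y)$ equals the integer-point transform of $\Int(C(P)^*)$, read as a rational function. For rational polytopes I would redo Proposition~\ref{prop: cone boundary vs cone} with $v_0$ ranging over the appropriate real values. Alternatively, and more cleanly, I would work directly with the formula of Theorem~\ref{thm: sum-of-cones-faces}, $K_P=\sum_F K^\circ_{P,F}$. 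This expresses $K_P(\z)$ as the formal sum $\sum_{\v\in L^\vee}\exp(-h_{P-\z}(\v))$, split along the open cones of $\N(P)$, with each piece a rational function.

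The key device is the map $[P]\mapsto$ the function $\v\mapsto \exp(-h_{P-\z}(\v))$ on $L^\vee$. Support functions are not additive under inclusion–exclusion, so I would instead use indicator functions. I would pass to the lifted cones and write $[C(P)^*]$'s interior indicator via polarity. The identity $\sum\alpha_i[P_i]=0$ then transfers to $\sum \alpha_i[C(P_i)]=0$, modulo cones contained in hyperplanes, since the cone construction respects indicator-function relations. By Lawrence–Varchenko/Brion polarity, which is a valuation on the algebra of rational polyhedral cones modulo cones containing lines, the identity further transfers to $\sum\alpha_i[C(P_i)^*]=0$ modulo cones containing lines.

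The lower-dimensional $P_i$ are handled by this same step. If $P_i$ lies in a proper affine subspace, then $C(P_i)$ lies in a proper subspace, so $C(P_i)^*$ contains a line. Its integer-point transform is therefore $0$ as a rational function, which is exactly why those terms drop out. The integer-point transform $C\mapsto A_C$ is a valuation that vanishes on cones containing lines; this is the standard Lawrence–Khovanskii–Pukhlikov fact, which underlies the remark before Corollary~\ref{reciprocity of A_C}. Applying it gives $\sum_{i\le a}\alpha_i A_{C(P_i)^*}=0$. Then Corollary~\ref{cor: K_P vs A_C} yields $\sum_{i\le a}\alpha_i K_{P_i}=0$.

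For rational (non-lattice) $P_i$, the exponents $h_{P_i}(\v)$ are rational. I would replace the lattice $\Z\oplus L^\vee$ by $\frac1N\Z\oplus L^\vee$, or equivalently substitute $y_0\mapsto y_0^{1/N}$, so that the same argument applies.

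The main obstacle is the polarity step: we need the fact that $[C]\mapsto[C^*]$ preserves linear relations modulo cones with lines. This must be applied at the level of indicator functions, with attention to the non-full-dimensional cones whose duals contain lines. I would cite this as a known theorem (Lawrence, Varchenko; see \cite{Barvinok}) rather than reprove it.
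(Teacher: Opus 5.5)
Your argument is correct, but it takes a genuinely different route from the paper's. The paper never homogenizes or dualizes. It first proves \cref{lem:slice}, the additivity $K_P=K_{Q_1}+K_{Q_2}$ under a single hyperplane cut, by matching the terms of the face expansion of \cref{thm: sum-of-cones-faces} face by face: faces missing the hyperplane, faces inside it, and faces cut by it. This uses inclusion--exclusion of normal cones, together with the vanishing of the terms attached to normal cones of the lower-dimensional $Q_1\cap Q_2$, which contain a line. It then cuts all full-dimensional $P_i$ into the chambers $R$ of the arrangement of their facet hyperplanes and writes $K_{P_i}=\sum_{R\subset P_i}K_R$. Evaluating the indicator relation at a generic point of each chamber shows that the coefficient of each $K_R$ is zero, so the lower-dimensional $P_i$ never enter.

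You instead realize $K_P$, up to the factor $1-y_0^{-1}$ and inversion of variables, as the composite of linear maps $[P]\mapsto[C(P)]\mapsto[C(P)^*]\mapsto A_{C(P)^*}$, namely Lawrence--Varchenko polarity followed by the Lawrence--Khovanskii--Pukhlikov valuation. This handles an arbitrary relation in one stroke, and it explains structurally why the lower-dimensional $P_i$ drop out: their homogenized dual cones contain lines.

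The paper's proof is elementary and self-contained. Beyond the face expansion it only uses that cones containing a line contribute zero, and it works for rational polytopes without any change of lattice. Yours is shorter and places $K_P$ in the classical theory of polyhedral valuations. The cost is importing two nontrivial theorems. You also need every cone in the relation to be rational, including those coming from the lower-dimensional $P_i$, so that the Lawrence--Khovanskii--Pukhlikov map is defined on them. Finally, you need one common refinement $\frac1N\Z\oplus L^\vee$ for all non-lattice $P_i$.

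Two small corrections would simplify your write-up. First, the lifted relation is exact, not just true modulo lower-dimensional cones. $\sum_i\alpha_i[C(P_i)]$ can be nonzero only at the apex, where it equals $\sum_i\alpha_i$, and this vanishes by applying the Euler characteristic to $\sum_i\alpha_i[P_i]=0$. Polarity is an honest linear map on the whole cone algebra, so it then gives $\sum_i\alpha_i[C(P_i)^*]=0$ on the nose. Second, your aside that support functions are not additive is inaccurate. If $P\cup Q$ is convex, then $h_{P\cup Q}=\max(h_P,h_Q)$ and $h_{P\cap Q}=\min(h_P,h_Q)$, so $P\mapsto\exp(-h_{P-\z}(\v))$ does satisfy inclusion--exclusion for each fixed $\v$. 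The real obstruction to a termwise argument is that the defining series of a lower-dimensional piece converges for no $\z$. Passing to rational functions, in either proof, is exactly what handles this.
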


\begin{example}
Consider the polytope in Example~\ref{ex:dim-1}. Let $a\leq b\leq c\in\R$. Recall
\[K_{[a,b]}(z)=\frac{1}{1-\exp(z-b)}+\frac{1}{1-\exp(a-z)}-1.
\]
We have
\begin{align*}
& \quad K_{[a,b]}(z) + K_{[b,c]}(z)\\
&= \left(\frac{1}{1 - \exp(z-b)} + \frac{1}{1-\exp(a-z)} - 1\right) + \left(\frac{1}{1 - \exp(z-c)} + \frac{1}{1-\exp(b-z)} - 1\right) \\
&=\frac{1}{1 - \exp(z-c)} + \frac{1}{1-\exp(a-z)} + \frac{1}{1-\exp(z-b)} + \frac{-\exp(z-b)}{1-\exp(z-b)} - 2 \\
&= K_{[a,c]}(z).
\end{align*}
    
\end{example}




\begin{lemma}\label{lem:slice}
Suppose that $P$ is a full-dimensional polytope that has been subdivided into two full-dimensional polytopes $Q_1,Q_2$ by a single hyperplane $H$.  Then 
\begin{equation}\label{eq:PQ}
K_P(y_0; \y) = K_{Q_1}(y_0; \y) + K_{Q_2}(y_0; \y).
\end{equation}
\end{lemma}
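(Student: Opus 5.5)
We will prove \eqref{eq:PQ} by comparing the open-cone expansions of \Cref{thm: sum-of-cones-faces}. Write $H=\{\x : \u(\x)=c\}$ with $\u\in L^{\vee}$ primitive, $Q_1=P\cap\{\u\ge c\}$, $Q_2=P\cap\{\u\le c\}$ and $G=P\cap H$. The basic tool is that each $K^\circ_{P,F}$ is a lattice-point sum over an open polyhedral cone. On that cone the exponent $\v\mapsto \min_{\x}\v(\x-\z)=\v(\p-\z)$ is a single linear function, for any fixed $\p$ in the face. So each $K^\circ$ is the integer-point transform of that cone, evaluated at a monomial substitution. By \Cref{prop:lattice-point-generating-function} and the fact quoted before \Cref{reciprocity of A_C}, such a transform is zero as a rational function when the underlying set is a finite disjoint union of relatively open cones whose union contains a line.

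\textbf{Step 1: faces not meeting $H$ in their relative interior.} Let $F$ be a face of $P$ with $F\subset\{\u>c\}$, or more generally any face of $P$ with $F\cap H$ not meeting its relative interior and $F$ not contained in $H$. Then $F$ is a face of exactly one $Q_i$, and it has the same open normal cone there as in $P$. So these terms match on both sides.

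\textbf{Step 2: faces crossing $H$.} Let $F$ be a face of $P$ whose relative interior meets $H$ transversally. Then $F\cap Q_1$ and $F\cap Q_2$ are faces of $Q_1$ and $Q_2$ with open normal cone $\Int C_F$, the same as in $P$. This produces one extra copy of the term $K^\circ_{P,F}$ on the right-hand side. The face $E=F\cap H$ is a face of both $Q_1$ and $Q_2$. Its open normal cones are the relative interiors of $C_F+\R_{\ge0}\u$ in $Q_1$ and of $C_F+\R_{\ge0}(-\u)$ in $Q_2$. On both of these the exponent is $\v(\p-\z)$ with $\p\in E$, the same linear function as on $\Int C_F$, since $\u(\p)=c$ is constant along $E$.

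So the surplus on the right-hand side attached to $E$ is a single linear generating function summed over the disjoint union
\[\Int(C_F)\ \sqcup\ \mathrm{relint}(C_F+\R_{\geq 0}\u)\ \sqcup\ \mathrm{relint}(C_F-\R_{\geq 0}\u)=\mathrm{relint}(C_F+\R\u).\]
This set contains the line $\R\u$, so its transform vanishes as a rational function. The disjoint-union identity holds because $\u\notin\pm C_F$ and $\u\notin\mathrm{span}(C_F)$, which follows from transversality.

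\textbf{Step 3: faces of $G$ that are already faces of $P$ inside $H$.} This covers $E\subset H$ with $E$ a face of $P$, including vertices of $P$ on $H$. Here $C^{P}_E$ is cut by the hyperplane $\u^\perp$ direction into the cones of $Q_1$ and $Q_2$ together with extra cones containing $\pm\u$. We do the same bookkeeping: the open normal cones of $E$ in $Q_1$ and $Q_2$ are $\mathrm{relint}(C^P_E\cup\ldots)$-type pieces, and each face $E'$ of $P$ strictly containing $E$ and crossing $H$ is already handled in Step 2. We then check that the open cones of $E$ in $Q_1$, in $Q_2$ and in $P$ differ exactly by a union of relatively open cones with common linear exponent whose union contains $\R\u$. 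Concretely, $C^{Q_1}_E=C^P_E+\R_{\ge0}\u$ and $C^{Q_2}_E=C^P_E-\R_{\ge0}\u$, so the same line argument applies to the relative interiors of $C^P_E+\R\u$ and of $C^P_E$.

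\textbf{Main obstacle.} Summing Steps 1--3 gives $K_{Q_1}+K_{Q_2}-K_P$ as a finite sum of transforms of sets containing a line, and hence zero. The hard part is Step 3: the normal cones there are not simply $C^P_E\pm\R_{\ge0}\u$ when $\u\in C^P_E$ or $-\u\in C^P_E$, for instance when $H$ supports $P$ at $E$ from one side. I would first treat the case that $H$ meets the interior of $P$ and verify the decomposition of the cones $C^{Q_i}_E$ directly from the definition of $C_F$. For a safer route I would fall back on refining everything by a common complete simplicial fan $\T$ refining $\N(P)$, $\N(Q_1)$, $\N(Q_2)$ and containing $\pm\u$ as rays. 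Then, by \Cref{thm: sum-of-cones}, the comparison can be done cone by cone in $\T$, using that $h_P=\min(h_{Q_1},h_{Q_2})$ pointwise on $\v$ with the maximum realized piecewise-linearly.
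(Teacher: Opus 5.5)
You follow the same face-by-face matching as the paper, but with the open expansion $K_P=\sum_F K^\circ_{P,F}$ instead of the paper's closed expansion $\sum_F(-1)^{\dim F}K_{P,F}$. Steps~1 and~2 are correct. In Step~2 the open version is actually cleaner than the paper's Case~3, since the cut-face surplus is a single disjoint union with no sign bookkeeping.

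One correction to Step~2: $\mathrm{relint}(C_F)+\R\u$ does not contain the line $\R\u$ unless $F=P$. What you use is that this set is invariant under translation by $\R\u$. Your general criterion, ``a disjoint union of relatively open cones whose union contains a line,'' is false as stated: $\R\e_1\sqcup\Int(\R^2_{\geq0})$ is a counterexample.

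The genuine gap is Step~3. For a face $E\subset H$ of $P$, $\u$ is constant on $E$, so $\u\in\mathrm{span}(C^P_E)$. Put $A=C^P_E+\R_{\geq0}\u$, $B=C^P_E-\R_{\geq0}\u$ and $C=C^P_E$. These all have the same dimension $k$, and $\mathrm{relint}A\cap\mathrm{relint}B=\mathrm{relint}C$. Your Step~2 argument was a disjoint union relying on $\u\notin\mathrm{span}(C_F)$, so ``the same line argument applies'' does not hold here. What you need is $K^\circ_{Q_1,E}+K^\circ_{Q_2,E}-K^\circ_{P,E}=0$. All three terms carry the exponent $\v(\p-\z)$ with $\p\in E$, so this amounts to the interior inclusion--exclusion
\[
[\mathrm{relint}A]+[\mathrm{relint}B]-[\mathrm{relint}C]=[\mathrm{relint}(C+\R\u)].
\]
This requires $\mathrm{relint}A\cup\mathrm{relint}B=\mathrm{relint}(A\cup B)$, which is not automatic, because interiors do not commute with unions. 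It is true, since a point of $\partial C$ interior to $C+\R\u$ but on the boundary of both $A$ and $B$ would force the tangent cone of $C$ into a hyperplane.

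The quickest fix is reciprocity. By convexity $A\cap B=C$, and $A\cup B=C+\R\u$, so the closed identity $[A]+[B]-[C]=[C+\R\u]$ has vanishing transform. The cones $A,B,C$ are pointed (since $\pm\u\notin C^P_E$, as both $Q_i$ are full-dimensional) and of the same dimension $k$. Stanley reciprocity (Corollary~\ref{reciprocity of A_C}, extended by triangulation) therefore gives the vanishing of $K^\circ_{Q_1,E}+K^\circ_{Q_2,E}-K^\circ_{P,E}$ after $\t\mapsto\t^{-1}$ and a sign $(-1)^k$. Alternatively, run everything in the closed expansion. There this case is the paper's Case~2 and is immediate, at the cost of the sign check $\dim F_0=\dim F-1$ for cut faces.

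Your ``main obstacle'' paragraph misplaces the difficulty. The formula $C^{Q_i}_E=C^P_E\pm\R_{\geq0}\u$ always holds, because the normal cone of an intersection of polyhedra is the Minkowski sum of the normal cones. The case $\pm\u\in C^P_E$ is excluded by hypothesis anyway.

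The fallback is not a proof either. On a cone $\sigma$ of a common refinement, the three sums carry different linear exponents (namely $-h_P$, $-h_{Q_1}$, $-h_{Q_2}$ restricted to $\sigma$), so no cone-by-cone identity holds. For $[a,c]=[a,b]\cup[b,c]$ on $\sigma=\R_{\geq0}$, you get $\frac{1}{1-e^{a-z}}$ against $\frac{1}{1-e^{a-z}}+\frac{1}{1-e^{b-z}}$. The identity only emerges after pairing a cone of $Q_1$ with a cone of $Q_2$ into a line, which is the main argument again. Finally, with $h_S(\v)=-\min_{\x\in S}\v(\x)$ one has $h_P=\max(h_{Q_1},h_{Q_2})$, not $\min$.
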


\begin{proof} 
For simplicity we will assume in the proof below $y_0 = e$. We do not lose any generality since our arguments on normal fans are independent from the choice of $y_0$, and Definition \ref{def:dual-lattice-point-generating-function} defines $K_P(y_0; \y)$ as a valid rational function on $y_0$ and $\y$. In the rest of this proof, we use $K_P(\y)$ in place of $K_P(y_0; \y)$.

Let $H_{\geq 0}$ and $H_{\leq 0}$ be the two closed halfspaces such that $Q_{1} = H_{\geq 0} \cap P$ and $Q_{2} = H_{\leq 0} \cap P$. 
Let $F$ be any face of $P$, and let $F_1 = F\cap Q_1$, $F_2 = F\cap Q_2$. Notice that $F_1, F_2$ are (possibly empty) faces of $Q_1, Q_2$, whose dimensions depend on whether and how $F$ intersects with $H$. Let $C_F(P)$ be the normal cone of $F$ in the normal fan $\N(P)$. Notice that the normal fans of $Q_1$ and $Q_2$ together form a refinement of the normal fan of $P$.

To show the equality in (\ref{eq:PQ}), we will compare the decompositions from \cref{thm: sum-of-cones-faces} for both sides of the expression.

Partition the set of faces of $P$ into the following three categories:

\noindent\textbf{Case 1:} $F$ does not intersect with $H$. In this case $F$ is a face of exactly one of the two polytopes $Q_1, Q_2$. Suppose $F\in Q_1$, then $K_{P,F}(\y) = K_{Q_1, F}(\y)$. This term shows up exactly once on each side, so it cancels out.
    
\noindent\textbf{Case 2:} $F$ is contained in $H$. In this case $F= F_1 = F_2$, so $F$ is a face in all three polytopes. The terms that involve $F$ on both sides are $K_{P,F}(\y)$, $K_{Q_1, F}(\y)$, and $K_{Q_2,F}(\y)$. The relations of the normal cones are
\[C_{F}(P)  = C_{F}(Q_1) \cap C_{F}(Q_2); \]
\[C_{F}(Q_1\cap Q_2) = C_{F}(Q_1) \cup C_{F}(Q_2). \]
We have the equality of indicator functions:
\begin{equation}\label{eq:inclexcl} [C_F(P)] = [C{_{F}}(Q_1)] + [C{_{F}}(Q_2)] - [C{_{F}}(Q_1\cap Q_2)].\end{equation}

By \cref{def:KPF}, $K_{P,F}(\y)$ is a rational function associated with the normal cone $C_F(P)$, and therefore \eqref{eq:inclexcl} implies
\begin{align*} K_{P,F}(\y) &= K_{Q_1,F} (\y) +  K_{Q_2,F} (\y)- K'_{Q_1\cap Q_2, F}(\y)\\
&= K_{Q_1,F} (\y) +  K_{Q_2,F} (\y).
\end{align*}
Here, $K'_{Q_1 \cap Q_2,F}(\y)$ denotes \cref{def:KPF} applied to the cone $C_F(Q_1 \cap Q_2)$.  The second equality holds because $Q_1\cap Q_2$ is not full dimensional, and therefore the normal cone $C{_{F}}(Q_1\cap Q_2)$ contains a line (specifically, the line parallel to the normal vector of $H$), and hence $K'_{Q_1\cap Q_2, F} (\y)= 0$. Since the terms $K_{P,F}(\y)$, $K_{Q_1,F}(\y)$, and $K_{Q_2,F}(\y)$ each appears exactly once with the same sign on both sides of the equation we want, the identity above implies that these terms also cancel out. 

\noindent\textbf{Case 3:} The only case remaining is when $F$ intersects with $H$ but is not contained in $H$. We call such $F$ a ``cut face''. In this case, both $F_1, F_2$ are nonempty, and $F_1\neq F_2$. Let $F_0$ denote the intersection of $F_1, F_2$. 

If $H$ only cuts $F$ on its boundary, then in such case no new faces are created within $F$. Then exactly one part of it (say, this is $F_1$) equals $F$ while the other one ($F_2$) is some (lower dimensional) face contained in $H$, thus $F_1 =F$ and $F_2 = F_0$. We have $K_{P,F} = K_{Q_1, F_1}$, so these two terms cancel out. Notice that the $K_{Q_2, F_2}$ term was previously canceled in Case 2 since $F_2 = F_0$ is contained in $H$. 

After all the previous cancellations, the only remaining terms on the $Q_1$ and $Q_2$ side of the equation are those corresponding to the new faces created by the slicing of $H$. We now match them by considering our final case scenario, which is when $H$ cuts $F$ in its interior. In this case, it creates three new faces $F_1, F_2$, and $F_0$ in $Q_1$ and/or $Q_2$.
Notice that 
    \begin{align}\label{eq: cut face identity}
    K_{P,F}(\y) = K_{Q_1,F_1}(\y) = K_{Q_2, F_2}(\y).
    \end{align}
    
    Similar to Case 2, we can apply inclusion-exclusion on indicator functions, noting that 
\[C_{F_0}(Q_1) \cap C_{F_0}(Q_2)  = C_F(P),\]
\[C_{F_0}(Q_1) \cup C_{F_0}(Q_2) = C_{F_0}(Q_1\cap Q_2).\]
 This yields
\[K_{P,F}(\y) = K_{Q_1,F_0}(\y) + K_{Q_2,F_0}(\y) - K'_{Q_1\cap Q_2,F_0}(\y) = K_{Q_1,F_0}(\y) + K_{Q_2,F_0}(\y),   \]
where once again, we have used that the term  $K'_{Q_1\cap Q_2,F_0}(\y)$ vanishes since $C_{F_0}(Q_1\cap Q_2)$ contains a line.  Combining with (\ref{eq: cut face identity}), we get 
\[K_{P,F}(\y) = K_{Q_1,F_1}(\y) + K_{Q_2,F_2}(\y) - K_{Q_1,F_0}(\y) - K_{Q_2,F_0}(\y).\]

Since in this case $\dim(F) = \dim(F_1) = \dim(F_2) = \dim(F_0)+1$, the signs of the corresponding terms in the expression from \cref{thm: sum-of-cones-faces} match with the alternating signs here, therefore all terms cancel out perfectly. We have now taken care of every face on both sides, hence the equality $(\ref{eq:PQ})$ holds as desired.
\end{proof}

\begin{ex}\label{ex: subdiv}
Let $P$ be the polytope from Example \ref{ex: 2-dim}. Slice $P$ into two parts, $Q_1$ and $Q_2$, using the hyperplane $H$ given by $z_1 + z_2 +1 =0$. Label the four vertices of $P$ as $\v_1, \v_2, \v_3, \v_4$, see Figure~\ref{fig:subdiv-example}. Note that the normal cone of $P$ at a vertex $\v_i$ is the cone $C_i$ in Example \ref{ex: 2-dim}. In this example we explicitly denote them as $C_{v_i}(P)$ to distinguish them from the normal cones of $Q_1$ and $Q_2$. Let the new vertex created by $H$ be $\v_0 := (-1,0)$. 

\begin{figure}[htbp]
\begin{minipage}[c]{0.45\textwidth}
\centering
\begin{tikzpicture}[scale=1.0]
\draw[thick](0,1)--(1,0)--(0,-1)--(-2,-1)--(0,1);
\draw[thick, blue](1,-2)--(-2,1);
\node at (0,1) {$\bullet$};
\node at (1,0) {$\bullet$};
\node at (-2,-1) {$\bullet$};
\node at (0,-1) {$\bullet$};
\node at (-2,-1) {$\bullet$};
\node[above] at (0,1) {$\v_1 =(0,1)$};
\node[below] at (-2,-1) {$\v_2= (-2,-1)$};
\node[right] at (0,-1) {$\v_3= (0,-1)$};
\node[right] at (1,0) {$\v_4 = (1,0)$};
\node[right] at (-0.3,0) {$Q_2$};
\node[left,blue] at (-2,1) {$H$};
\node[left,blue] at (-1,0) {$\v_0$};
\node[blue] at (-1,0) {$\circ$};
\node[right] at (-1.3,-0.6) {$Q_1$};

\end{tikzpicture}
\begin{tikzpicture}[scale=1.0]
\node at (0,0) {$\bullet$};


\draw[->,thin](0,0)--(-1,1);
\draw[->,thin](0,0)--(0,1);
\draw[->,thin](0,0)--(1,-1);
\draw[->,thin](0,0)--(-1,-1);

\node at (1.5,0) {$C_{\v_2}(P)$};
\node at (0,-1.2) {$C_{\v_1}(P)$};
\node at (-0.5,1.7) {$C_{\v_3}(P)$};
\node at (-1.5,0) {$C_{\v_4}(P)$};

\node[right] at (1,-1) {$\mathbf{u_1}$};
\node[below] at (-1,-1) {$\mathbf{u_2}$};
\node[left] at (-1,1) {$\mathbf{u_3}$};
\node[above] at (0,1) {$\mathbf{u_4}$};
\end{tikzpicture}
\end{minipage}
\hfill
\begin{minipage}[c]{0.5\textwidth}
\centering
\begin{tikzpicture}[scale=0.8]
\draw[fill=black,opacity=0.2](1,-1)--(1,0)--(0,1)--(-1,0)--(-1,-1)--(0,-2)--(1,-1);
\node at (0,0) {$\bullet$};
\node at (0,-1) {$\bullet$};

\draw[->,thin](0,0)--(0,1);
\draw[->,thin](0,0)--(1,-1);
\draw[->,thin, blue](0,0)--(-1,-1);

\node[right] at (1,0) {$C_{\v_2}(Q_1)$};
\node[below] at (0,-1.8) {$C_{\v_0}(Q_1)$};
\node[left] at (-1,0) {$C_{\v_3}(Q_1)$};

\node[right] at (1,-1.2) {$\mathbf{u_1}$};
\node[left, blue] at (-1,-1.2) {$\mathbf{n_H}$};
\node[above] at (0,1) {$\mathbf{u_4}$};
\end{tikzpicture}
\vspace{1em} 
        

\begin{tikzpicture}[scale=0.8]
\draw[fill=black,opacity=0.2](1,-1)--(2,0)--(1,1)--(0,2)--(-1,1)--(-2,0)--(-1,-1)--(0,-2)--(1,-1);
\node at (0,0) {$\bullet$};
\node at (0,-1) {$\bullet$};
\node at (-1,0) {$\bullet$};
\node at (1,0) {$\bullet$};
\node at (0,1) {$\bullet$};

\draw[->,thin](0,0)--(-1,1);
\draw[->,thin](0,0)--(1,-1);
\draw[->,thin,blue](0,0)--(1,1);
\draw[->,thin](0,0)--(-1,-1);

\node[below] at (0,-1.8) {$C_{\v_1}(Q_2)$};
\node[right] at (1.5,0) {$C_{\v_0}(Q_2)$};
\node[above] at (0,1.8) {$C_{\v_3}(Q_2)$};
\node[left] at (-1.5,0) {$C_{\v_4}(Q_2)$};

\node[right] at (1,-1) {$\mathbf{u_1}$};
\node[left] at (-1,-1) {$\mathbf{u_2}$};
\node[right, blue] at (1,1.2) {$\mathbf{-n_H}$};
\node[left] at (-1,1.2) {$\mathbf{u_3}$};
\end{tikzpicture}
\end{minipage}
\caption{Subdivision of polytope $P$ into $Q_1$ and $Q_2$, the normal fans $\N(P)$, $\N(Q_1)$, and $\N(Q_2)$. The shadowed area represents the fundamental parallelepipeds for each maximal cone.}
\label{fig:subdiv-example}
\end{figure}

We compute each of $K_{Q_1} (\z)$ and $K_{Q_2} (\z)$ using the formula from Lemma~\ref{lem: parallelepiped formula}. For $Q_1$, the three maximal cones in $\N(Q_1)$ are denoted as $C_{\v_0}, C_{\v_2}, C_{\v_3}$.  We use the generating rays $\u_1, \u_4$ from $\N(P)$ together with $\n_H = (-1,-1)$, a normal vector to $H$. 



\begin{eqnarray*}
K_{Q_1}(\z)
&=& K_{Q_1,\v_0} + K_{Q_1,\v_2} + K_{Q_1,\v_3}   - K_{Q_1,\v_0 \v_2} - K_{Q_1,\v_2\v_3} - K_{Q_1,\v_0\v_3} +  1\\
&=& 
\frac{1 + e^{z_2}}{(1 - e^{1+z_1+z_2})(1 - e^{-1-z_1+z_2})} 
+ \frac{1}{(1 - e^{-1-z_2})(1 - e^{-1-z_1+z_2})} 
+ \frac{1}{(1 - e^{-1-z_2})(1 - e^{1+z_1+z_2})} \\
&&\quad 
- \frac{1}{1 - e^{-1-z_1+z_2}} 
- \frac{1}{1 - e^{-1-z_2}} 
- \frac{1}{1 - e^{1+z_1+z_2}} 
+ 1
\end{eqnarray*}

For $Q_2$, since this is a square centered at the origin, the terms that show up in the formula are very symmetric. The generating rays we use are $\u_1, \u_2, \u_3$ from $\N(P)$, together with the normal vector to $H$, $-\n_H$. 

\begin{eqnarray*}
K_{Q_2}(\mathbf{z}) &=& K_{Q_2,\v_0} + K_{Q_2,\v_3} + K_{Q_2,\v_4} + K_{Q_2,\v_1}\\
&& \quad - K_{Q_2,\v_0 \v_3} - K_{Q_2,\v_3 \v_4} - K_{Q_2,\v_1 \v_4} - K_{Q_2,\v_0 \v_1} + 1 \\
&=&
\frac{1 + e^{-1-z_1}}{(1 - e^{-1-z_1+z_2})(1 - e^{-1-z_1-z_2})} 
+ \frac{1 + e^{-1-z_2}}{(1 - e^{-1-z_1-z_2})(1 - e^{-1+z_1-z_2})} \\
& & \quad +\frac{1 + e^{-1+z_1}}{(1 - e^{-1+z_1+z_2})(1 - e^{-1+z_1-z_2})} 
+ \frac{1 + e^{-1+z_2}}{(1 - e^{-1+z_1+z_2})(1 - e^{-1-z_1+z_2})} \\
& & \quad - \frac{1}{1 - e^{-1-z_1-z_2}} 
- \frac{1}{1 - e^{-1+z_1-z_2}} 
- \frac{1}{1 - e^{-1+z_1+z_2}} 
- \frac{1}{1 - e^{-1-z_1+z_2}} 
+ 1.
\end{eqnarray*}

One can manually verify that the sum of the two rational functions above, $K_{Q_1}+ K_{Q_2}$, indeed coincides with the formula for $K_P$ we computed earlier in Example~\ref{ex: 2-dim}. However, it is more insightful to see how the cases from the proof of Lemma \ref{lem:slice} dictate the cancellations here. Let's check some example faces.

\noindent\textbf{Case 1.} Consider the vertex $\v_4 $ of $P$. It lies entirely in $Q_2$ and is disjoint from $Q_1$. Looking at the normal fans, the two cones $C_{\v_4}(P) = C_{\v_4}(Q_2)$. Thus 
\[K_{P,\v_4} =\frac{1 + e^{-1+z_1}}{(1 - e^{-1+z_1+z_2})(1 - e^{-1+z_1-z_2})}  = K_{Q_2,\v_4}.\] (note that we use the equivalent notation $K_{P,C_4}$ to represent $K_{P,\v_4}$ in Example~\ref{ex: 2-dim}). And so they match across the equation $K_P = K_{Q_1} + K_{Q_2}$.

\noindent\textbf{Case 2.} Consider $\v_3$. This vertex lies inside $H$, which makes it a face of $P$, $Q_1$, and $Q_2$. By inclusion-exclusion on the normal cones, the relationship is given by 
\begin{align*}
   & K_{P, \v_3} - K_{Q_1, \v_3} - K_{Q_2, \v_3} \\
   =& \frac{1}{(1{-}e^{-1-z_2})(1{-}e^{-1+z_1-z_2})} 
   {-}\frac{1}{(1 {-} e^{-1-z_2})(1 {-} e^{1+z_1+z_2})}
   {-}\frac{1 {+} e^{-1-z_2}}{(1 {-} e^{-1-z_1-z_2})(1 {-} e^{-1+z_1-z_2})}\\
   =&0.
\end{align*}
Notice that cone $C_{\v_3}(Q_1 \cap Q_2)$, which is the union of $C_{\v_3}(Q_1)$ and $C_{\v_3}(Q_2)$, contains the line generated by the normal vector $\n_H$ to $H$, hence its corresponding term is $0$.

\noindent\textbf{Case 3.} The edge $\v_1 \v_2$ in $P$ is cut through by the hyperplane $H$ in its interior point $\v_0$, so $H$ splits it into two new edges, $\v_0 \v_2$ in $Q_1$ and $\v_0 \v_1$ in $Q_2$. The sum of the functions for these new sub-faces balances perfectly with the function of the original uncut edge in $P$. The relationship is
\[K_{P, \v_1 \v_2}(\mathbf{z}) = K_{Q_1, \v_0 \v_2}(\mathbf{z}) + K_{Q_2, \v_0\v_1} (\mathbf{z}) - K_{Q_1, \v_0}(\mathbf{z}) - K_{Q_2, \v_0}(\mathbf{z}).\]
We omit the computational verification here.
\end{ex}

\begin{proof}[Proof of \cref{thm:valuation}]
Let us consider the hyperplane arrangement consisting of all supporting hyperplanes of facets of all the polytopes $P_1,\ldots,P_a$.  Each polytope $P_i$ is a union of chambers in this hyperplane arrangement.  By repeatedly applying \cref{lem:slice}, we obtain the identity of rational functions
$$
K_{P_i}(\y) = \sum_{R \subset P_i} K_R(\y)
$$
where the summation is over chambers $R$ contained in $P_i$.  Substituting this into $\sum_i \alpha_i K_{P_i}(\y)$, the assumption implies that the coefficient of each $K_{R}(\y)$ sums to 0, giving the desired result.
\end{proof}

\subsection{The degree property}
We show that the degree of $K_P(\y)$ is negative in every direction in a precise sense. Specifically, let $\w \in \R^d$ be a nonzero vector. Define the \emph{$\w$-degree} of the monomial $\y^\p = y_1^{p_1} y_2^{p_2} \dots y_d^{p_d}$ to be
$$
\deg_\w(\y^\p) := \ip{\w, \p}.
$$
For any polynomial $f(\y)$, the $\w$-degree $\deg_\w(f(\y))$ is the maximum $\w$-degree of the monomials in it. And for a rational function $f(\y)/g(\y)$ in the variables $\y$, we declare that $$\deg_\w(f(\y)/g(\y)) = \deg_\w(f(\y)) - \deg_\w(g(\y)).$$

\begin{proposition}\label{prop:deg}
Let $P$ be a non-empty full-dimensional rational polytope in the lattice $L \subset L \otimes_\Z \R$, and let $\w \in \R^d$ be nonzero.  Then
$$
\deg_\w( K_P(\base; \y)) < 0.
$$

\end{proposition}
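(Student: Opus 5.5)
The plan is to reduce the statement to dimension one by the valuative property, and then to use an explicit one-variable computation. One could instead bound the degree directly from the cone decomposition in \Cref{thm: sum-of-cones}, but that route is awkward. Each summand $K_{P,C}$ in \Cref{lem: parallelepiped formula} has its own $\w$-degree: the numerator has degree $\max_{\v\in\Pi(C)}\ip{\w,-\v}$ and each denominator factor $1-\base^{\u_i(\p)}\y^{-\u_i}$ has degree $\max(0,\ip{\w,-\u_i})$. Cancellation among summands is then needed, for instance the constant $1$ coming from the vertex cone term. So a direct bound on each summand does not suffice. I will therefore argue through a slicing structure.

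\textbf{Step 1 (reduction).} By \Cref{thm:valuation}, triangulate $P$ into full-dimensional rational simplices $S_j$, so that $K_P=\sum_j K_{S_j}$. A sum of rational functions of $\w$-degree $<0$ again has $\w$-degree $<0$. The reason is that "$\deg_\w<0$" means the function tends to $0$ along $\y=(t^{w_1}s_1,\ldots,t^{w_d}s_d)$ as $t\to\infty$ for generic fixed $s$, and this property is preserved under sums. So it suffices to treat simplices, and in fact any convenient generating family of the polytope algebra. A cleaner choice is to use the polytope algebra to reduce to polytopes that are products adapted to $\w$; this only works well when $\w$ is a coordinate direction, so I do not rely on it.

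\textbf{Step 2 (direct asymptotics).} I would prove $\lim_{t\to\infty}K_P(\base;\y(t))=0$ directly from the series. Write $\y=\exp(\z)$, so that $\y(t)$ corresponds to $\z=\z_s+(\ln t)\w$, and fix $\base>1$. For $\z$ outside $P$, the series $\sum_{\v}\exp(\min_{\x\in P}\v(\x-\z))$ does not converge. The rational function is the analytic continuation, so I would instead work cone by cone, setting $K=\sum_{C}K^\circ_{P,C}$.

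For a cone $C$, each term of $K^\circ_{P,C}$ tends to $0$ or $\infty$ according to the sign of $\ip{\v,\w}$. After rewriting via Stanley reciprocity (\Cref{reciprocity of A_C}), a cone whose generators have $\ip{\u_i,\w}>0$ should be flipped into its negative, which yields a convergent expansion. The plan is to choose the triangulation $\T$ refining $\N(P)$ and also the hyperplane $\w^\perp$. Then each cone lies weakly on one side of $\w^\perp$.

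Cones on the side $\ip{\v,\w}\le 0$ give series in which every nonzero lattice point contributes a term tending to $0$. The contributions from cones on the other side are rewritten by reciprocity. The claim is that the constant terms, coming from the origin and from cones lying in $\w^\perp$, cancel by the Euler characteristic relation $\sum_{C\in\T}(-1)^{\codim C}=0$ over the relevant fans. The contributions from cones inside $\w^\perp$ reduce to the analogous statement for a lower-dimensional fan, and these are handled by induction on $d$. The base case $d=1$ is the formula in \Cref{ex:dim-1}, which has degree $-1$ in $y$ and degree $-1$ in $y^{-1}$, hence degree $<0$ for both signs of $w$.

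\textbf{Main obstacle.} The hard step is the bookkeeping of these constant terms: showing that after expanding each $K_{P,C}$ in the direction $\w$, the degree-zero parts sum to zero. I expect this to reduce to the fact that the cones of $\T$ meeting the closed half-space $\{\ip{\v,\w}\le0\}$, with alternating signs, have total Euler characteristic zero. This in turn follows from the completeness of $\T$, using that a polyhedral complex subdividing a closed half-space has compactly supported Euler characteristic $0$. For irrational $\w$, there are no cones in $\w^\perp$ apart from $\{0\}$, and the argument simplifies. For the general case I would first try to perturb $\w$, noting that the maximal $\w$-degree of the finitely many monomials involved is piecewise linear in $\w$.
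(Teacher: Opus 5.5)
\textbf{There is a genuine gap in Step~2.} Your Step~1 (reduction to simplices via \cref{thm:valuation}) is exactly how the paper begins. Step~2, however, does not yet prove anything. The only nontrivial point is that the parts of $K_P(\y(t))$ which do not decay as $t\to\infty$ cancel, and you leave this as ``the claim'' and ``I expect''. The mechanism you sketch for it is also not the right one.

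For rational $\w$ with $\T$ refining $\w^\perp$, the surviving terms are not constants. The cones $G\subset\w^\perp$ contribute the $t$-independent function $\sum_{\v\in\w^\perp\cap L^\vee}\exp(-h_P(\v)-\v(\z_s))$. This is the dual lattice function of the projection of $P$ along $\w$, and it is not zero. After Stanley reciprocity, a cone $C\not\subset\w^\perp$ on the side where the terms blow up tends to $(-1)^{\dim C}$ times the integer-point transform of its face $C\cap\w^\perp$ at inverted variables. Induction on $d$ for the inequality $\deg<0$ says nothing about such terms; what you need is that they cancel exactly. That requires reciprocity on each $G\subset\w^\perp$ together with a \emph{local} identity for every such $G$: the sum of $(-1)^{\dim C}$ over the cones $C\supseteq G$ lying in that closed half-space with $C\cap\w^\perp=G$ vanishes. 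Up to sign, this sum is the reduced Euler characteristic of a contractible subcomplex of the link of $G$. The global relation you quote is not this identity, and as written it is false: for the complete fan $\T$ one has $\sum_{C\in\T}(-1)^{\codim C}=1$.

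Your treatment of general $\w$ also fails, for three reasons. A rational fan cannot refine an irrational hyperplane. Irrationality of $\w$ does not keep lattice vectors out of $\w^\perp$ (e.g.\ $\w=(1,\sqrt2,0)$). And perturbation cannot give strictness: $\deg_\w K_P$ is a difference of the support functions of two Newton polytopes, hence continuous in $\w$, so passing to the limit from generic directions only gives $\deg_\w\le 0$. The special directions, such as facet normals of the Newton polytope of the denominator, are exactly where strictness is at stake.

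\textbf{The missing idea.} The paper uses the simplex you reduced to and then never exploited. Let $\v_1,\dots,\v_{d+1}$ be the rays of $\N(\Delta)$; every proper subset of them spans a cone. Regroup the alternating sum of \Cref{thm: sum-of-cones} by lattice points $\q$ of fundamental parallelepipeds rather than by cones. Then the alternating sum over all cones containing $\q\in\Int\Pi(\mathrm{span}_{\geq0}(\v_1,\dots,\v_r))$ collapses (\cref{prop:Kq}) to
\[
K_\q=\bby^\q\,\frac{1-\prod_{i=r+1}^{d+1}\h_i}{\prod_{i=1}^{d+1}(1-\h_i)} .
\]
The cancellation you found awkward is exactly the numerator factor $1-\prod_{i>r}\h_i$. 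Writing $\q=\sum_{i\le r}\alpha_i\v_i$ with $\alpha_i\in(0,1)$, one reads off
\[
\deg_\w K_\q=\sum_{i=1}^{r}\Bigl(\alpha_i\ip{\w,\v_i}-\max(0,\ip{\w,\v_i})\Bigr)+\Bigl(\max\bigl(0,\sum_{i>r}\ip{\w,\v_i}\bigr)-\sum_{i>r}\max(0,\ip{\w,\v_i})\Bigr).
\]
Both brackets are $\le 0$. Equality would force $\ip{\w,\v_i}=0$ for $i\le r$ and the remaining $\ip{\w,\v_i}$ to have one weak sign. That is impossible, because the $\v_i$ positively span the dual space and $\w\ne0$. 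This handles every real $\w\ne0$ at once, with no limits, no refinement by $\w^\perp$, no topology, and no perturbation.
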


We start with some building blocks of the proof. Let $\F$ be a complete simplicial fan.  For each cone $C = \sp_{\geq 0}(\v_1,\ldots,\v_r)$, the set of \emph{fundamental points} is $\Int(\Pi(C)) \cap L^\vee$, where $\Pi(C)$ denotes the fundamental parallelepiped of $C$.  Here, the interior $\Int(\Pi(C))$ is taken in the linear span of $C$.

\begin{defn}
The \emph{fundamental star} of a simplicial fan $\F$ is the set of all fundamental points of cones of $\F$:
\[\star_\F := \bigsqcup_{C \in \F} \Int(\Pi(C)) \cap L^\vee\] 
\end{defn}

Let $\Delta$ be a full-dimensional rational simplex with normal fan $\F$.  Define the monomial
$$
\bby^\p := y_0^{-h_\Delta(\p)} y_1^{p_1} \cdots y_d^{p_d},
$$
and for each $\v_i$ a generating ray of $\F$, we define $\h_i:= \bby^{\v_i}$.

Let $\q \in \star_\F$ be a fundamental point and suppose that $\q \in \Int(\Pi(C))$ where $C = \sp_{\geq 0}(\v_1,\ldots,\v_r)$. Define the rational function
$$
K_\q(y_0; \y) := \bby^\q \frac{1 - \prod_{i=r+1}^{d+1} \h_i }{\prod_{i=1}^{d+1} (1-\h_i)},
$$
where $\v_{r+1},\ldots,\v_{d+1}$ are the remaining generating rays of $\F$.  

\begin{proposition}\label{prop:Kq}
Let $\F$ be the normal fan of a full-dimensional rational simplex $\Delta$.  Then we have $$K_{\Delta}(y_0; \y) = \sum_{\q \in \star_\F} K_\q(y_0; \y).$$
\end{proposition}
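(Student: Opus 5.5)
The plan is to prove the identity term by term: partition the lattice $L^\vee$ into classes indexed by the fundamental points $\q\in\star_\F$, and show that the part of the series \eqref{eqn: K_P(y)} supported on the class of $\q$ sums to exactly $K_\q(y_0;\y)$. I will work with $y_0=e$ wherever convenient, since all arguments are about the fan. I also read the monomial $\bby^{\v}$ as the summand attached to $\v$ in \eqref{eqn: K_P(y)}, i.e.\ $y_0^{-h_\Delta(\v)}$ times the $\y$-monomial of $\v$. If the sign conventions of $\y^{\pm\v}$ differ, the same argument applies after composing with $\v\mapsto-\v$ consistently on both sides.

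\textbf{Step 1 (structure of the fan).} Since $\Delta$ is a full-dimensional simplex, its normal fan $\F$ has exactly $d+1$ rays with primitive generators $\v_1,\ldots,\v_{d+1}$. Every proper subset of these rays spans a simplicial cone of $\F$, and these are all the cones. Moreover, $h_\Delta$ is linear on each cone $C\in\F$. Hence for $\p,\p'\in C$ we have $\bby^{\p+\p'}=\bby^{\p}\bby^{\p'}$. In particular, for $\p=\q+\sum_{i\in S}n_i\v_i$ lying in a single cone containing $\q$ and the $\v_i$, $i\in S$, we get $\bby^{\p}=\bby^{\q}\prod_{i\in S}\h_i^{n_i}$.

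\textbf{Step 2 (unique decomposition).} Each $\p\in L^\vee$ lies in the relative interior of a unique cone $C=\sp_{\geq0}(\v_i:i\in I)$, with $I\subsetneq[d+1]$. Write $\p=\sum_{i\in I}\alpha_i\v_i$ with $\alpha_i>0$, and split $\alpha_i=n_i+\beta_i$ with $n_i=\lfloor\alpha_i\rfloor$ and $\beta_i\in[0,1)$. Set $J=\{i:\beta_i>0\}\subseteq I$ and $\q=\sum_{i\in J}\beta_i\v_i$. Then $\q\in\Int(\Pi(C'))\cap L^\vee$ for $C'=\sp_{\geq0}(\v_i:i\in J)$, so $\q\in\star_\F$.

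Conversely, fix $\q\in\star_\F$, lying in the fundamental parallelepiped of the cone on rays $1,\dots,r$ after relabeling. The lattice points whose fundamental point is $\q$ are exactly
\[
\p=\q+\sum_{i=1}^{d+1}n_i\v_i,\qquad n_i\in\Z_{\geq0},
\]
subject to the single constraint that $\{i>r: n_i\geq1\}\neq\{r+1,\ldots,d+1\}$. This constraint says the support must be a proper subset of the rays, i.e.\ a cone of $\F$. The cases $n_i\geq0$ for $i\le r$ and $n_i=0$ or $n_i\geq1$ for $i>r$ are then automatic from the construction. Uniqueness of the representation follows from the uniqueness of the coordinates $\alpha_i$ in a simplicial cone.

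\textbf{Step 3 (summation).} By Step 1, the contribution of the class of $\q$ is
\[
\bby^{\q}\prod_{i=1}^{r}\frac{1}{1-\h_i}\cdot\Bigl(\prod_{i=r+1}^{d+1}\frac{1}{1-\h_i}-\prod_{i=r+1}^{d+1}\frac{\h_i}{1-\h_i}\Bigr).
\]
Here the subtracted product is the excluded case where all $n_i\geq1$ for $i>r$. This expression simplifies to $K_\q(y_0;\y)$. Summing over the disjoint classes gives the proposition. The sums are to be understood as convergent series on a common open region, which exists because each cone sum converges where $|\h_i|<1$. Alternatively, one can read each conewise series as the corresponding rational function, as in \Cref{thm: sum-of-cones}.

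I expect the main obstacle to be the bookkeeping in Step 2: showing that the map from lattice points to pairs $(\q,(n_i))$ is a bijection onto exactly the stated set. The care is needed in handling coefficients that are integers, which push $\q$ to a smaller cone, and in checking that the one excluded configuration is precisely ``full support''. The rest is the geometric-series computation above.
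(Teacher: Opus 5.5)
Your proof is correct, but it takes a different route from the paper. The paper starts from the signed decomposition $K_\Delta=\sum_{C\in\F}(-1)^{\codim(C)}K_{\Delta,C}$ of Theorem~\ref{thm: sum-of-cones}. It expands each closed-cone sum with the fundamental parallelepiped formula of Proposition~\ref{prop:lattice-point-generating-function}. It then regroups the numerators by fundamental point, using that $\q\in\Int(\Pi(C_\q))$ lies in $\Pi(C)$ exactly for the cones $C\supseteq C_\q$. Finally it evaluates an alternating sum over $J\subsetneq\{r+1,\ldots,d+1\}$, and the factor $1-\prod_{i>r}\h_i$ comes out of that signed identity.

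You instead partition $L^\vee$ directly. Taking the relative interior of the unique cone containing a lattice point, and then integer and fractional parts of its simplicial coordinates, assigns each lattice point to a unique $\q\in\star_\F$. The class of $\q$ consists of the points $\q+\sum_i n_i\v_i$ with $(n_1,\ldots,n_r)\in\Z_{\geq0}^r$ and $(n_{r+1},\ldots,n_{d+1})\in\Z_{\geq0}^{d+1-r}\setminus\Z_{\geq1}^{d+1-r}$. The same factor then appears by complementary counting rather than inclusion--exclusion.

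Your version is self-contained: it needs only linearity of $h_\Delta$ on cones, not the inclusion--exclusion theorem or the parallelepiped formula. It also exhibits each $K_\q$ as the generating function of an explicit set of lattice points, which the paper's algebraic regrouping leaves implicit. The paper's version is shorter given the machinery it has already developed.

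One point should be stated more precisely. A common domain of convergence does not follow from each cone sum converging separately where its own $|\h_i|<1$, but it does exist. For $y_0>1$ and $\z\in\Int(\Delta)$ one has $h_{\Delta-\z}(\v)>0$ for all $\v\neq 0$, so every $|\h_i|<1$ and the whole series converges absolutely. Together with the finiteness of $\star_\F$, this justifies your rearrangement into finitely many classes, and the identity of rational functions follows.
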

\begin{proof}
Let $\q \in \star_\F$ be a fundamental point in $\Int(\Pi(C_\q))$ where $C_\q = \sp_{\geq 0}(\v_1,\ldots,\v_r)$.  Then $\q$ belongs to $\Pi(C) \cap L^\vee$ for every cone $C \in \F$ that contains $\q$.  These cones are of the form $C = \sp_{\geq 0}(\v_1,\ldots,\v_r,\v_{j_1}, \ldots, \v_{j_k})$
where $J=\{j_1,\ldots,j_k\} \subsetneq [\v_{r+1},\ldots,\v_{d+1}]$.  Thus
\begin{align*}
K_\Delta(y_0; \y) &=
\sum_{C \in \F} (-1)^{\codim(C)} \sum_{\p \in C} \bby^\p  \\
&= \sum_{C \in \F} (-1)^{\codim(C)} \frac{\sum_{\q \in \Pi(C) \cap L^\vee}\bby^\q}{\prod (1 - \bby^{\v})}\\ 
&= \sum_{\q \in \star_\F} \bby^\q \sum_{C \ni \q} (-1)^{\codim(C)} \frac{1}{\prod  (1 - \bby^{\v})},
\end{align*}
where the products in the denominators are over the generators $\v$ of $C$.  For a fixed $\q$,
\begin{align*}
\sum_{C \ni \q} (-1)^{\codim(C)} \frac{1}{\prod_i  (1 - \bby^{\v})}
&= (-1)^{d-r} \prod_{i=1}^{r} \frac{1}{1-\h_i} \sum_{J \subsetneq [r+1,d+1]}(-1)^{|J|} \prod_{j \in J} \frac{1}{1-\h_j} \\
&= (-1)^{d-r} \prod_{i=1}^{d+1} \frac{1}{1-\h_i} \sum_{J \subsetneq [r+1,d+1]}(-1)^{|J|} \prod_{j \in [r+1,d+1] \setminus J} (1-\h_j) \\
&= \prod_{i=1}^{d+1} \frac{1}{1-\h_i} \left(1 - \prod_{i=r+1}^{d+1} \h_i \right).
\end{align*}
Summing over $\q$, we obtain the claimed result.
\end{proof}

We are now ready to prove \Cref{prop:deg}.
\begin{proof}[Proof of \Cref{prop:deg}]
In this proof we fix $\w$ and write $\deg$ for $\deg_\w$.
We will also assume below that $P$ lies in the standard lattice ($y_0 = e$) and write $K_P(\y)$ for $K_P(e; \y)$ since the choice of $y_0$ does not affect the degree. By definition, we have that $\deg(f+g) \leq \max(\deg(f),\deg(g))$, so by \cref{thm:valuation} and triangulating $P$, we can reduce the problem into showing that $\deg K_\Delta(\y) < 0$ for every lattice simplex $\Delta$.  Furthermore, by \cref{prop:Kq}, it suffices to show that $\deg(K_\q) < 0$ for any $\q \in \star_\F$, where $\F$ is the normal fan of a lattice simplex $\Delta$.  Write $\q = \sum_{i=1}^r \alpha_i \v_i$ for $\alpha_i \in (0,1)$.  We compute
\begin{align*}
&\deg(K_\q(\y)) \\&= \deg(\y^\q) + \max(0, \sum_{i=r+1}^{d+1} \deg(\h_i)) - \sum_{i=1}^{d+1} \max(0,\deg(\h_i)) \\
&= \ip{\w,\sum_{i=1}^r \alpha_i \v_i} +  \max(0, \ip{\w,\sum_{i=r+1}^{d+1} \v_i})  -  \sum_{i=1}^{d+1} \max(0,\ip{\w,\v_i}) \\
&=  \left(\sum_{i=1}^{r}\Big(\alpha_i \ip{\w,\v_i} -  \max(0,\ip{\w,\v_i})\Big) \right) + \left(\max(0, \ip{\w,\sum_{i=r+1}^{d+1} \v_i}) -  \sum_{i=r+1}^{d+1} \max(0,\ip{\w,\v_i})\right).
\end{align*}
Both terms are $\leq 0$, hence $\deg(K_\q(\y)) \leq 0$. Suppose $\deg(K_\q(\y)) = 0$, then this will force $\langle \w, \v_i \rangle = 0$ for all $1\leq i \leq r$, which is impossible since $\w$ is nonzero. 
Thus $\deg K_\q(\y) < 0$. 
\end{proof}

\cref{prop:deg} can be reformulated as follows.
\begin{cor}\label{cor:deg}
Let $P$ be a non-empty full-dimensional rational polytope.  Write $K_P(\base;\y) = f(\y)/g(\y)$ as a ratio of polynomials in $y_1,y_2,\ldots,y_d$.  Then the Newton polytope of  $f(\y)$
 is contained in the interior of the Newton polytope of $g(\y)$.   
\end{cor}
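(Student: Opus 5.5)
The plan is to reduce the corollary to \Cref{prop:deg} via the standard support-function characterization of Newton polytopes. For a Laurent polynomial $f$ with Newton polytope $N(f)$ and a vector $\w$, the $\w$-degree is $\deg_\w(f)=\max_{\p\in N(f)}\ip{\w,\p}$, which is the support function of $N(f)$ in direction $\w$ (up to the sign convention of the paper). So $\deg_\w(K_P)=\deg_\w f-\deg_\w g<0$ for all nonzero $\w$ says that $\max_{N(f)}\ip{\w,\cdot}<\max_{N(g)}\ip{\w,\cdot}$ in every direction.

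The first step is to check that $\deg_\w(f/g)$ is well defined, i.e.\ independent of the representation $f/g$. This holds because $\deg_\w$ is additive on products of Laurent polynomials: the $\w$-leading form of a product is the product of the leading forms, and $\C[\y^{\pm1}]$ is a domain. Hence if $f/g=f'/g'$, then $fg'=f'g$ gives $\deg_\w f-\deg_\w g=\deg_\w f'-\deg_\w g'$. Consequently \Cref{prop:deg} applies to any chosen numerator and denominator, for instance those produced by \Cref{thm: sum-of-cones} after clearing denominators. Here I regard $y_0$ as a fixed positive real (or treat $f,g$ as polynomials in $\y$ with coefficients in $\C(y_0)$), so that coefficients do not affect the supports. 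One also has to note that $f\neq 0$ is implicitly assumed, or else the statement is trivial.

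The second step is convex-geometric. Let $A=N(f)$ and $B=N(g)$, both compact convex. Suppose some $\p\in A$ is not in $\Int(B)$. If $B$ is full-dimensional, then a point outside the interior admits a nonzero $\w$ with $\ip{\w,\p}\ge\max_B\ip{\w,\cdot}$: use a separating hyperplane if $\p\notin B$, and a supporting hyperplane at $\p$ if $\p\in\partial B$. Then $\deg_\w f\ge\ip{\w,\p}\ge\deg_\w g$, contradicting \Cref{prop:deg}. If $B$ is not full-dimensional, pick $\w\neq0$ orthogonal to the affine span of $B$. Applying the proposition to both $\w$ and $-\w$ gives $\ip{\w,a}<c$ and $-\ip{\w,a}<-c$ for all $a\in A$, where $c$ is the constant value of $\ip{\w,\cdot}$ on $B$. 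This is impossible when $A$ is nonempty, so $B$ must be full-dimensional, and the interior statement is meaningful.

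The main subtlety is the well-definedness and additivity of $\deg_\w$ together with the degenerate-case handling. The separation argument itself is routine.
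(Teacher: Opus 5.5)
Your proposal is correct and matches the paper's approach. The paper gives no separate proof; it presents this corollary as a direct reformulation of \Cref{prop:deg}, since $\deg_\w f<\deg_\w g$ for every nonzero $\w$ says exactly that $N(f)\subset\Int N(g)$. You have filled in the details the paper leaves implicit: that $\deg_\w(f/g)$ does not depend on the chosen representation, the separating or supporting hyperplane step, and why $N(g)$ must be full-dimensional.
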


\begin{ex}
    Consider the polytope $P$ from \cref{ex: 2-dim}.  We have $K_P(\base; \y) = f(y_1,y_2)/g(y_1,y_2)$ where the Newton polytopes are given by
    \begin{align*}
N(f) &= \conv((0,0),(1,0),(0,1),(1,-1)) \\ N(g) &= \conv((0,0),(-1,-1))+ \conv((0,0),(1,1)) \\&+\conv((0,0),(0,-1))+\conv((0,0),(1,-1)).
\end{align*}
\cref{cor:deg} can be verified directly.
\end{ex}

Recall that $\y$ can be viewed as coordinates on a torus $(\C^\times)^d$, which may be compactified to $\Pbb^d$.  We explicate one consequence of \cref{prop:deg}, which is also part of \cref{thm:expP2}.

\begin{cor}\label{cor: simple pole}
    Let $P$ be a full-dimensional rational polytope. The rational differential form $ \Theta_P := K_P(y_0; \y) \omega_{T_P}$ has no poles along any of the coordinate hyperplanes in $\Pbb^d \setminus (\C^\times)^d$.
\end{cor}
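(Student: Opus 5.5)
The plan is to make the statement concrete in local coordinates at each boundary hyperplane of $\Pbb^d$ and then read off the order of vanishing from \Cref{prop:deg}. I take $\omega_{T_P}$ to be the torus form $\frac{dy_1\cdots dy_d}{y_1\cdots y_d}$, as in \eqref{eq:Omegaintro}. The complement $\Pbb^d\setminus(\C^\times)^d$ consists of the $d$ coordinate hyperplanes $\{y_i=0\}$ and the hyperplane at infinity $\{y_0'=0\}$, where $y_0'$ is the extra homogeneous coordinate; I avoid the letter $y_0$ here, since the paper uses it for the base. The $d+1$ cases are treated uniformly, since each is governed by one direction $\w$.

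First I handle the hyperplane $\{y_i=0\}$ for $1\le i\le d$. Write $K_P=f/g$ as in \Cref{cor:deg}. I factor out the exact power of $y_i$ and write $K_P = y_i^{m}\, \tilde f/\tilde g$, where $\tilde f$ and $\tilde g$ are polynomials (or Laurent polynomials in the other variables) not divisible by $y_i$. Then $m = \min_{\p\in N(f)} p_i - \min_{\p\in N(g)} p_i$. Take $\w=-\e_i$. Then $\deg_{\w}K_P = -\min_{N(f)}p_i + \min_{N(g)}p_i = -m$, so \Cref{prop:deg} gives $m>0$, that is, $m\ge 1$.

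The torus form contributes exactly a simple pole $dy_i/y_i$ along $y_i=0$, and is regular with no zero in the other variables there. Hence $\Theta_P$ has order $m-1\ge 0$ along $y_i=0$, so there is no pole. I also need $\tilde g$ not to vanish identically on $y_i=0$. This holds by the choice of factorization, since after removing all powers of $y_i$ the denominator restricted to $y_i=0$ is a nonzero polynomial in the remaining variables.

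Next I treat the hyperplane at infinity. I use the affine chart $u = 1/y_1$, $t_j = y_j/y_1$ for $j\ge 2$, so that $y_1 = u^{-1}$ and $y_j = t_j u^{-1}$. The torus form is invariant under this monomial change of coordinates up to sign: it becomes $\pm\frac{du\,dt_2\cdots dt_d}{u\,t_2\cdots t_d}$, again with a simple pole along $u=0$. A monomial $\y^\p$ becomes $u^{-(p_1+\cdots+p_d)}\,\t^{(\ldots)}$. So the order of $K_P$ along $u=0$ equals $-\deg_{\w}K_P$ with $\w=(1,\ldots,1)$, computed as a difference of the minimum of $-\sum p_j$ over $N(f)$ and over $N(g)$. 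By \Cref{prop:deg} this order is at least $1$, and the simple pole of the torus form is cancelled exactly as before.

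The only delicate step is the bookkeeping that the $\w$-degree defined via maxima over Newton polytopes equals minus the actual vanishing order of the rational function along the divisor. In particular, I must check that no cancellation occurs between the leading $\w$-forms after substitution. This follows because restricting to the divisor keeps precisely the face of the Newton polytope extremal in the direction $\w$, and that face polynomial is nonzero. Since $\deg_{\w}$ is intrinsic to the rational function (it is additive under products and independent of the representation $f/g$), the choice of representative does not matter. With that identification, the corollary follows directly from \Cref{prop:deg}, or equivalently from \Cref{cor:deg}.
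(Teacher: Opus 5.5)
Your proposal is correct and is the paper's argument: apply \Cref{prop:deg} with $\w=-\e_i$ for each hyperplane $\{y_i=0\}$ and with $\w=\e_1+\cdots+\e_d$ for the hyperplane at infinity, so that $K_P$ vanishes to order at least $1$ along each of them and absorbs the simple pole of $\omega_{T_P}$. The paper states this in one line; your local-chart check that $-\deg_{\w}$ equals the vanishing order is the bookkeeping it leaves implicit here and later records in general as \Cref{lem:degw}.
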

\begin{proof}
    There are $d+1$ coordinate hyperplanes: the $d$ hyperplanes $\{y_1 = 0\}, \ldots, \{y_d = 0\}$, and the hyperplane at infinity. The claim follows from applying \cref{prop:deg} with $\w = -e_1, \ldots, \w = -e_d$ and finally $\w = e_1 + e_2 + \cdots + e_d$.
\end{proof}

\subsection{Residues on facets}
We now show that $\Theta_P$ in Corollary \ref{cor: simple pole} has the correct residue $\Res_{T_F}$ for each facet $F$ of $P$. By Theorem \ref{thm: sum-of-cones}, it suffices to consider one cone of the normal fan $\mathcal{N}(P)$ at a time. Moreover, we can assume that all cones are simplicial, since any non-simplicial cone can be triangulated without changing $K_{P}$.


Recall that $A_C(\t)$ is the integer-point transform of a cone $C$ as defined in (\ref{eq: Def of A_c}). We first show that ``the residue'' of $A_C(\t)$ along the subspace spanned by a facet coincides with the integer-point transform on that facet.

Let $\u_1,\ldots,\u_m$ be integral generators of the rational simplicial cone $C$, and let $F$ be the facet of $C$ that does not contain $\u_1$. Up to a ${\rm GL}(d,\Z)$-change of coordinates, we may assume that $\u_1 = \e_1$, and so $\t^{\u_1} = t_1$. We define the residue of $A_C$ along $t_1=1$ as
\[ \Res_{t_1=1} A_C(\t) : = \Big[(1-t_1) \cdot A_C(\t)\Big]\Big|_{t_1=1}.\]

Let $H' = \R^d/\u_1$ be the linear subspace orthogonal to $\u_1 =\e_1$, and let $L' = L\cap H'$. Define $\tilde{F}$ as the image of $F$ in $H'= \R^d/\e_1$, hence it is associated with the lattice $L'$. The lattice point generating function of $\tilde{F} \subset L' \otimes_\Z \R$ is therefore in variables $\tilde{\t} := (t_2, \dots, t_d)$.

\begin{proposition}\label{prop: A_C Residue}
\[ 
\Res_{t_1 = 1} A_C(\t)= A_{\tilde{F}} (\tilde{\t})
\]
\end{proposition}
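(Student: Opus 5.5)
We will use the fundamental parallelepiped formula of \Cref{prop:lattice-point-generating-function}, applied both to $C$ and to $\tilde F$. After the ${\rm GL}(d,\Z)$-change of coordinates with $\u_1=\e_1$, that proposition gives
\[
A_C(\t)=\Big(\sum_{\v\in\Pi(C)\cap L^\vee}\t^{\v}\Big)\frac{1}{1-t_1}\prod_{i=2}^m\frac{1}{1-\t^{\u_i}} .
\]
Multiplying by $(1-t_1)$ cancels the pole along $t_1=1$. Setting $t_1=1$ then gives
\[
\Res_{t_1=1}A_C(\t)=\Big(\sum_{\v\in\Pi(C)\cap L^\vee}\tilde\t^{\tilde\v}\Big)\prod_{i=2}^m\frac{1}{1-\tilde\t^{\tilde\u_i}},
\]
where the tilde denotes the projection forgetting the first coordinate, i.e.\ the image in $\R^d/\e_1$. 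This is legitimate provided no $\tilde\u_i$ vanishes. That holds because the $\u_i$ are linearly independent, so $\u_i\notin\R\e_1$ for $i\ge 2$. The same independence shows that $\tilde F=\sp_{\ge0}(\tilde\u_2,\ldots,\tilde\u_m)$ is a simplicial cone of dimension $m-1$.

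Next we compare this with \Cref{prop:lattice-point-generating-function} applied to $\tilde F$ in the lattice $L'$ (more precisely, the image lattice $L^\vee/\Z\e_1\cong\Z^{d-1}$). Two points need care.

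\textbf{Primitivity.} The projected vectors $\tilde\u_i$ need not be primitive in the quotient lattice. To avoid the issue, we do not insist on primitive generators. The tiling argument in the proof of \Cref{prop:lattice-point-generating-function} works verbatim for any integral generators $\w_i$, using the parallelepiped spanned by those $\w_i$. We apply it to $\tilde F$ with generators $\tilde\u_2,\ldots,\tilde\u_m$, so the denominators on the two sides agree.

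\textbf{Lattice points.} It then remains to show that projection
\[
\pi:\Pi(C)\cap L^\vee\longrightarrow \Pi(\tilde\u_2,\ldots,\tilde\u_m)\cap\Z^{d-1}
\]
is a bijection. Since $\u_1=\e_1$ lies in $L^\vee$, a point $\sum\alpha_i\u_i$ is integral if and only if $\sum_{i\ge2}\alpha_i\u_i$ is integral modulo $\Z\e_1$. Points of $\Pi(C)$ have $\alpha_1\in[0,1)$, and because $\e_1$ is a lattice vector, the integral points of $\Pi(C)$ lying over a given $\tilde\v$ are forced to have $\alpha_1=0$ after reduction.

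We establish the bijection as follows. Given $\tilde\v=\sum_{i\ge2}\alpha_i\tilde\u_i\in\Z^{d-1}$ with $\alpha_i\in[0,1)$, choose the lift $\sum_{i\ge2}\alpha_i\u_i$. Its first coordinate is some real number $c$. Adding $\alpha_1\e_1$ with $\alpha_1=\lceil c\rceil-c\in[0,1)$ gives the unique integral point of $\Pi(C)$ over $\tilde\v$. Conversely, the image of an integral point is integral, and its coefficients $\alpha_2,\ldots,\alpha_m$ are unchanged by projection, so the image lies in the parallelepiped.

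With the numerators matched, we get $\Res_{t_1=1}A_C(\t)=A_{\tilde F}(\tilde\t)$. We expect the main obstacle to be this bijection, and in particular handling non-primitive $\tilde\u_i$ consistently. If the paper insists on primitive generators for $\tilde F$, we would instead argue directly without parallelepipeds. In that approach, we write $A_C(\t)(1-t_1)=\sum_{\v\in C}\t^{\v}-\sum_{\v\in C+\e_1}\t^{\v}$, a sum over the lattice points of $C\setminus(C+\e_1)$. Each fiber of $C\cap L^\vee$ over a point of $\tilde F$ is a ray $\{\v_0+k\e_1:k\ge0\}$ starting at its minimal point. Hence $C\setminus(C+\e_1)$ contains exactly one lattice point over each lattice point of $\tilde F$, and setting $t_1=1$ gives $A_{\tilde F}$ directly.
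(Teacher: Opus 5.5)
Your proof is correct, but your main route differs from the paper's. The paper also starts by cancelling $1-t_1$ in the parallelepiped formula for $C$. It then re-expands $\bigl(\sum_{\v\in\Pi(C)\cap L^\vee}\t^{\v}\bigr)\prod_{i\ge2}(1-\t^{\u_i})^{-1}$ as the generating function of the ``bottom layer'' $S=\{\v\in C\cap L^\vee : \v-\u_1\notin C\}$. Finally it observes that projection along $\u_1$ identifies $S$ with $\tilde F\cap L'$. This is essentially your fallback argument; the paper leaves the fiber-by-fiber bijectivity you spell out implicit.

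Your main route instead applies the tiling formula a second time, to $\tilde F$ with the possibly non-primitive generators $\tilde\u_2,\ldots,\tilde\u_m$. The denominators then agree verbatim, and everything reduces to the finite bijection $\Pi(C)\cap L^\vee\to\Pi(\tilde\u_2,\ldots,\tilde\u_m)\cap\Z^{d-1}$, which your choice $\alpha_1=\lceil c\rceil-c$ establishes.

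What your route gains is that every step is an identity of rational functions. The specialization $t_1=1$ is visibly legitimate because no $\tilde\u_i$ vanishes, whereas the paper substitutes $t_1=1$ into an infinite sum over $S$. In exchange, the paper's route is shorter and never needs generators for $\tilde F$. Allowing non-primitive generators is necessary, as the paper's Example~\ref{ex: compute A_C} shows: the residue first appears as $(1+t_2+t_2^2)/(1-t_2^3)$, which is exactly the parallelepiped formula for $\tilde F$ with generator $3$.

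One sentence should be removed: the claim that integral points of $\Pi(C)$ over $\tilde\v$ are ``forced to have $\alpha_1=0$ after reduction'' is wrong as stated. In that same example, $(1,1)=\tfrac13\e_1+\tfrac13\u_2$ has $\alpha_1=\tfrac13$. What is true, and what your next paragraph proves, is that $\alpha_1$ is uniquely determined by $\tilde\v$.
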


\begin{proof}
We have

\begin{eqnarray}\label{eqn: Res}
\Res_{t_1 = 1} A_C(\t)  &=&  (1-t_1) A_C(\t)|_{t_1=1}.\\  \nonumber
&=& \Big(\sum_{\v\in \Pi(C) \cap L^\vee} \t^\v \Big) \prod_{i=2}^m \frac{1}{1-\t^{\u_i}} \Big|_{t_1=1} \quad (\text{by Prop. } \ref{prop:lattice-point-generating-function} )\\ \nonumber
&=& \Big(\sum_{\v \in S} \t^\v \Big)\Big|_{t_1=1},
\end{eqnarray}
where 
\begin{align*}
S &=  \{\v \in L^\vee\; |\; \v \in \Pi(C)+ \sum_{i=2}^m q_i \u_i \text{ with } q_i\in \Z_{\geq 0}\}\\
  &= \{\v \in C\;|\; \v-\u_1 \notin C\}.
\end{align*}

Each $\v \in S$, under the canonical projection to $H'$,  is a lattice point $\tilde{\v} \in \tilde{F}\cap L'$ and 
\[ \tilde{\t}^{\tilde{\v}} = \t^\v |_{t_1=1}.\]
Therefore (\ref{eqn: Res}) equals the following:

\begin{align*}
 \Big(\sum_{\v \in S} \t^\v \Big)\Big|_{t_1=1}
&= \sum_{\tilde{\v} \in \tilde{F} \cap L'} \tilde{\t}^{\tilde{\v}} = A_{\tilde{F}} (\tilde{\t}),
\end{align*}
as desired.

\end{proof}

\begin{example}\label{ex: compute A_C}
Let $L$ be the standard lattice $\Z^2$, and let $C$ be the cone spanned by $\u_1 = \e_1$ and $\u_2 = (2,3)$. By definition, we have
\[A_C(\t) = \frac{(1+t_1t_2 + t_1^2t_2^2)}{ (1-t_1)(1-t_1^2t_2^3)}. \]

Therefore 
\[ \Res_{t_1=1} A_C(\t) = \big[(1-t_1)\cdot A_C(\t)\big]\Big|_{t_1=1} = \frac{1+t_2+t_2^2}{1-t_2^3} = \frac{1}{1-t_2}.\]

Let $F$ be the facet of $C$ spanned by $\u_2$, and let $\tilde{F}$ be its projection onto $H'$, so
\[A_{\tilde{F}}(t_2) = \frac{1}{1-t_2}. \]
This agrees with \cref{prop: A_C Residue}.
\end{example}

\begin{figure}[h!]
\centering

\begin{tikzpicture}[scale=1.0]
\draw[fill=black,opacity=0.2](1,0)--(3,3)--(2,3)--(0,0);
\node at (0,0) {$\bullet$};
\node at (1,1) {$\bullet$};
\node at (2,2) {$\bullet$};

\node at (0,0) {$\bullet$};
\node at (0,1) {$\circ$};
\node at (0,2) {$\circ$};
\node at (0,3) {$\circ$};

\draw[->,thin](0,0)--(2,3);
\draw[->,thin](0,0)--(1,0);

\draw[-,thin](0,0)--(0,3.5);

\node at (-0.5,1.5) {$\tilde{F}$};
\node at (2.5,1) {$\Pi(C)$};

\node[right] at (1,0) {$\mathbf{u_1}$};
\node[below] at (2.5,3.5) {$\mathbf{u_2}$};
\end{tikzpicture}
\caption{Fundamental parallelepiped in Example \ref{ex: compute A_C}}\label{fig:dual-polytope-example}
\end{figure}

We will now use Proposition \ref{prop: A_C Residue} to prove the residue condition on $K_P$. 


\begin{theorem}\label{thm:ResF} Let $P$ be a polytope in $L\otimes_\Z \R$ and $F$ be a facet of $P$, we have
    \[\Res_{F} K_P(y_0; \y) \omega_{T_P} = K_F (y_0; \y) \omega_{T_F}.\]
\end{theorem}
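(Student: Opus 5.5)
Fix the facet $F$ of $P$. Let $\n\in L^\vee$ be its primitive inward normal, so $C_F=\R_{\geq0}\n$ and $h_P(\n)=-\n(\p)$ for every $\p\in F$. Choose a $\mathrm{GL}(d,\Z)$ change of coordinates on $L^\vee$ with $\n=\e_1$, and shift by a point of the lattice $L_{\mathrm{aff}(F)}$ so that $F\subset\{x_1=c\}$. Then $\bby^{\n}=y_0^{-h_P(\n)}y_1^{-1}$. The divisor $T_F$ on which we take the residue is the toric boundary component $\{y_1=y_0^{c}\}$, or its image in the appropriate coordinates. Its coordinates are $\tilde\y=(y_2,\dots,y_d)$, and these are exactly the coordinates of the torus of the lattice $L'$ on the affine span of $F$. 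Since $\omega_{T_P}=\prod dy_i/y_i$, the residue of $K_P\,\omega_{T_P}$ along this divisor is $\big[(1-\bby^{\n})K_P\big]\big|_{\bby^{\n}=1}\,\omega_{T_F}$, up to the orientation sign fixed by the convention for $\Res$. So it suffices to prove
\[
\big[(1-\bby^{\n})K_P(y_0;\y)\big]\big|_{\bby^{\n}=1}=K_F(y_0;\tilde\y).
\]

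Pick a complete simplicial fan $\T$ refining $\N(P)$ that uses $\n$ as a ray and introduces no new ray proportional to $\n$. By \Cref{thm: sum-of-cones}, $K_P=\sum_{C\in\T}(-1)^{\codim C}K_{P,C}$. By \Cref{lem: parallelepiped formula}, each $K_{P,C}$ is a change of variables of the integer-point transform $A_C$, namely $\t^{\v}=y_0^{-h_P(\v)}\y^{-\v}$. The key points are the following.

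\textbf{Cones without $\n$.} These contribute no pole along $\bby^{\n}=1$, because their denominators involve only other primitive rays. So they vanish after multiplying by $(1-\bby^{\n})$ and restricting. This genericity needs to be checked: no other ray's factor $1-\bby^{\u}$ should become identically $0$ on the divisor, which holds since $\u\neq\n$ and $\u$ is primitive.

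\textbf{Cones containing $\n$.} These are $C=\Cone(\n,\u_2,\dots,\u_m)$. By \Cref{prop: A_C Residue}, the residue of $A_C$ is $A_{\tilde G}$, where $\tilde G$ is the image of the facet $G=\Cone(\u_2,\dots,\u_m)$ in $L^\vee/\Z\n$. By \Cref{lem:dual-lattices}, $L^\vee/\Z\n$ is the dual lattice $(L')^\vee$ of $F$. The images $\tilde C$ of the cones $C\ni\n$ form a complete simplicial fan in $(L')^\vee\otimes\R$ that refines the (reduced) normal fan $\widetilde{\N}(F)$, namely the star of $\n$. Moreover, $\codim_{L^\vee}C=\codim_{(L')^\vee}\tilde C$. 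Since $h_P$ on $C$ is linear and agrees with $h_F$ modulo $\n$ up to the affine shift already fixed by the substitution $\bby^{\n}=1$, the monomials transform as $\t^{\v}\mapsto y_0^{-h_F(\tilde\v)}\tilde\y^{-\tilde\v}$.

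Summing with signs and applying \Cref{thm: sum-of-cones} for $F$ inside its own lattice then gives $K_F(y_0;\tilde\y)$.

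The main obstacle is the monomial bookkeeping in the key points above. Specifically, I would verify three things:
\begin{itemize}
\item for $\v\in C$, one has $h_P(\v)=h_F(\tilde\v)$ modulo the multiple of $\n$, once $\bby^{\n}$ is set to $1$;
\item the normalization $\omega_{T_P}\mapsto\omega_{T_F}$, including sign and orientation, matches the residue convention;
\item the projected cones really form a fan refining $\widetilde{\N}(F)$, with the star of $\n$ in $\T$ projecting isomorphically.
\end{itemize}
For the first, I would plan to use the pairing description $(\z,\v)\mapsto\v(\z-\z_0)$ with $\z_0\in F$. With this choice $h_{P-\z_0}(\n)=0$, so the monomial $\bby^{\n}$ is exactly $y_1^{-1}$ and the residue is taken along $y_1=1$, which makes the identification direct.
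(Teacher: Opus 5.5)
Your proposal is correct and follows essentially the same route as the paper: triangulate the normal fan with $\n=\u_F$ as a ray, discard the cones not containing $\n$, apply \Cref{prop: A_C Residue} to each cone in the star of $\n$, identify the projected star with a simplicial refinement of the normal fan of $F$ in $L^\vee/\Z\n\cong(L')^\vee$, and resum with \Cref{thm: sum-of-cones}. Your bookkeeping is slightly more careful than the paper's in two places. You take $\z_0\in F$ so that $h_{P-\z_0}(\n)=0$, and you use $h_P$ on each cone instead of an arbitrary point of $F$. The one small omission is that primitivity alone does not rule out a pole from a ray $\u=-\n$; that ray is harmless because $h_P(\n)+h_P(-\n)>0$ for full-dimensional $P$, so its divisor is disjoint from $T_F$.
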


\begin{proof}
Let $\F$ be the normal fan of $P$ in its affine span.  By Theorem \ref{thm: sum-of-cones}, we have
\begin{eqnarray}\label{eq: Res sum}
    \Res_{T_F} K_P(y_0; \y) \omega_{T_P}  
&=\Res_{T_F} \sum_{C\in \F} (-1)^{\mathrm{codim}(C)}K_{P,C}(y_0; \y) \omega_{T_P}  \nonumber \\
&= \sum_{C\in \F} (-1)^{\mathrm{codim}(C)} \Res_{T_F} K_{P,C}(y_0; \y) \omega_{T_P}.
\end{eqnarray}
Since we are free to triangulate the cones in $\F$, we may reduce to the case that each $C \in \F$ is simplicial. Since $F$ is a facet of $P$, its dual cone $C_F \in \F$ is generated by one vector, say, $\u_F$. 
Notice that for each $C\in \F$, if $\u_F \notin C$, then the residue $\Res_{T_F} K_{P,C}(y_0; \y) \omega_{T_P} = 0$. Therefore the sum in (\ref{eq: Res sum}) is reduced to summing over the collection
\[\F' = \{C\in \F\; |\; \u_F \in C\}.\] 

Let $H_F$ be the affine span of $F$ and $\F_F$ be the normal fan of $F$ in $H_F$. This fan can be identified with the image of $\F'$ in $L^\vee / \u_F \otimes_\Z \R$, which we denote by $\tilde{\F}$. For each cone $C\in \F'$, we write its image  in $\tilde{\F}$ as $\tilde{C}$. We'll show that
\[\Res_{T_F} K_{P,C} (y_0; \y) \omega_{T_P} = K_{F,\tilde{C}} (y_0; \y) \omega_{T_F}. \]

Up to coordinate changes and shifting in $\y$, we may assume that $\u_F = \e_1$ and $T_F$ is given by $y_1=1$. 
By Lemma \ref{lem: parallelepiped formula} and Proposition \ref{prop:lattice-point-generating-function}, we have
\begin{align*}
    \Res_{T_F} K_{P,C} (y_0; \y) \omega_{T_P} &= \Res_{t_1 = 1} A_C(\t) \omega_{T_F} 
\end{align*}
where $\t  = (t_1, \dots, t_d)= \exp_{y_0}(\p - \z) = \exp_{y_0}(\p) \cdot \y^{-1}$ for any point $\p\in F$, and $\omega_{T_F} = \frac{d y_2}{y_2} \wedge \dots \wedge \frac{d y_d}{y_d}$. By Proposition \ref{prop: A_C Residue} we have
\begin{align*}
    \Res_{t_1 = 1} A_C(\t) \omega_{T_F}  &= A_{\tilde{F}} (\tilde{\t}) \omega_{T_F}  \\
    &= K_{F,\tilde{C}} (y_0; \tilde{\y}) \omega_{T_F},
\end{align*}
where $\tilde{\y} = (y_2, \dots ,y_d)$.
Therefore
\begin{eqnarray*}
    \Res_{T_F} K_P(y_0; \y) \omega_{T_P}  
&=& \sum_{C\in \F'} (-1)^{\mathrm{codim}(C)} \Res_{T_F} K_{P,C}(y_0; \y) \omega_{T_P}\\
&=& \sum_{\tilde{C}\in \tilde{\F}} (-1)^{\mathrm{codim}(\tilde{C})} K_{F,\tilde{C}} (y_0; \tilde{\y}) \omega_{T_F}\\
&=& K_{F} (y_0; \tilde{\y}) \omega_{T_F},
\end{eqnarray*}
as desired.
\end{proof}

\section{Toric polytopes}\label{sec:toric}
\def\L{{\mathbf{L}}}
\def\M{{\mathbf{M}}}
\def\bm{{\mathbf{m}}}
\def\bell{{\mathbf{l}}}
\def\Hom{{\rm{Hom}}}

In this last section, we introduce \emph{exponential polytopes} and show that they are positive geometries. For a lattice $L$ as in previous sections, in this section we will consider an embedding $L \hookrightarrow \Z^m$ of $L$
 into a fixed lattice $\Z^m$.  This choice manifests itself as a choice of the compactification $\Pbb^m$ of the torus $(\C^\times)^m$.

\subsection{More on lattices and toric varieties}\label{sec:toricsub}

A sublattice $L$ in  $\Z^m$ is called \emph{saturated} if $L = (L \otimes_\Z \Q) \cap \Z^m$.  In this case, the quotient $\Z^m/L$ is also free abelian. Let $L$ be a saturated sublattice in  $\Z^m$.  Associated to $L$ is the inclusion of tori
$$
T_L := L \otimes_\Z \C^\times \cong (\C^\times)^d \hookrightarrow (\C^\times)^m =: T.
$$
The lattices $L$ and $\Z^m$ are the cocharacter lattices of $T_L$ and $T$ respectively.

Let $\L$ be a $d \times m$ matrix whose rows correspond to integer vectors that generate $L$.  Let $\bell_1,\ldots,\bell_m$ denote the columns of $\L$.  Then we obtain an explicit description of the inclusion $T_L \hookrightarrow T$ as the monomial map
$$
(\C^\times)^d \to (\C^\times)^m, \qquad \b =(b_1,b_2,\ldots,b_d) \mapsto (\b^{\bell_1},\ldots,\b^{\bell_m}).
$$

The torus $T_L$ acts on $T$ and this extends to an action on the compactification $\Pbb^m$ of $T$.  Note that the compactification $\Pbb^m$ depends on a choice of basis of $\Z^m$, which we assume to be fixed.

For a point $p \in T$, we call the orbit $T_L \cdot p$ a \emph{shifted torus}.

\begin{definition}
For a point $p \in T$, the \emph{toric variety} $X(L,p)$ associated to the pair $(L,p)$ is the normalization of the closure $\overline{T_L \cdot p}$ of the orbit $T_L \cdot p$ inside $\Pbb^m$.
\end{definition}

When $p = e$ the identity, we denote $X(L) = X(L,e)$.  Note that the action of $T_L$ on $T$ or on $\Pbb^m$ maps any shifted subtorus or projective toric variety to its corresponding variety passing through the identity $e \in T$.  Thus the isomorphism class of these varieties does not depend on the choice of $p$.  In particular, the choice of the point $p$ only matters when we are interested in the explicit projective embedding of $X(L,p)$.

Let $\L$ be the matrix associated with $L$ defined above. We can define a lattice polytope $Q_L$ associated with $L$ by taking the convex hull of the origin and the lattice points $\bell_1,\ldots,\bell_m \in \Z^d$ (the columns of $\L$).
The orbits of the action $T_L$ on $X(L,p)$ are in bijection with the faces of $Q_L$ (see \cite[Theorem 3.2.6]{Cox-Little-Schenck}).

The map $(\C^\times)^d \to (\C^\times)^m$ restricts to a map $\R^d_{>0} \to \R^m_{>0}$ and can be composed with the exponential map $\exp: \R^d \to \R^d_{>0}$.  We thus have exponential maps
\begin{align*}
\exp_L: &L \otimes_\Z \R \to \R^s_{>0}, \\
\exp_{L,\Z^m}: &L \otimes_\Z \R \to \R^s_{>0} \to \R^m_{>0}.
\end{align*}
Here, we may view $L \otimes_\Z \R$ as the Lie algebra of the real torus $T_{L,\R}$.

\begin{definition}
    If $P \subset L \otimes_\Z \R$ is a full-dimensional polytope, then $T_P = T_L$.  We call $X(L) \cong X(L,p)$ a \emph{toric compactification} of $T_P$.
\end{definition}
Note that if $X(L,p)$ is a toric compactification of $T_P$ then $\exp(P)$ lies in the positive real points of the dense torus orbit of $X(L,p)$.  The point $p$ is equal to $\exp(\z_0)$ in the notation of \cref{sec:expP}.


Let $H$ be a rational subspace of $L \otimes_\Z \R$.  Then $L' := H \cap L$ must be a saturated sublattice of $L$. Since $L$ itself is a saturated sublattice of $\Z^m$, this implies that $L'$ is also a saturated sublattice of $\Z^m$. Since $T_{L'} \subset T_L$, for a point on the shifted subtorus $q \in T_L \cdot p$, we have  $T_{L'} \cdot q \subset T_L \cdot p \subset T$. This implies that $T_{L'} \cdot q$ is a shifted subtorus of both $T_L$ and $T$. Therefore the projective toric variety $X(L',q)$ associated to the pair $(L', q)$ can be viewed as a subvariety of the projective toric variety $X(L,p)$ associated with $(L,p)$, or directly as a subvariety of the larger projective toric variety $\Pbb^m$.

\begin{proposition}
Under the exponential map $\exp_L$ (resp. $\exp_{L,\Z^m}$), the Zariski closure of the image of a rational linear subspace $H \subset L \otimes_\Z \R$ is a shifted subtorus of $T_L$ (resp. $T$). 
\end{proposition}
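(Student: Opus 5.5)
Write $H = \z_0 + H_0$ with $\z_0 \in H$ and $H_0$ a rational linear subspace. The idea is to show that $\exp_L(H)$ is exactly the set of positive real points of the shifted subtorus $T_{L'}\cdot \exp_L(\z_0)$, where $L' := H_0 \cap L$. Then we check that this real set is Zariski dense in that shifted subtorus, which is itself Zariski closed in $T_L$.

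First we identify the lattice. Since $H_0$ is a rational subspace, $L' = H_0\cap L$ is a saturated sublattice of $L$ and $L'\otimes_\Z\R = H_0$. Because $H_0$ is rational, a basis of $L'$ is an $\R$-basis of $H_0$. Choose a $\Z$-basis $\b_1,\ldots,\b_d$ of $L$ extending a basis $\b_1,\ldots,\b_k$ of $L'$; this is possible by saturation. In these coordinates $\exp_L$ is just coordinatewise exponentiation $\R^d\to\R^d_{>0}\subset (\C^\times)^d = T_L$. Also $H = \z_0 + \sp_\R(\b_1,\ldots,\b_k)$, so
\[
\exp_L(H) = \exp_L(\z_0)\cdot \bigl(\R_{>0}^k\times\{1\}^{d-k}\bigr) = \exp_L(\z_0)\cdot (T_{L'})_{>0}.
\]

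Next we prove density. The subtorus $T_{L'} = (\C^\times)^k\times\{1\}$ is Zariski closed in $T_L$, so its translate by $\exp_L(\z_0)$ is closed too. It remains to show that $\R_{>0}^k$ is Zariski dense in $(\C^\times)^k$. Any Laurent polynomial vanishing on $\R^k_{>0}$ becomes, after clearing denominators, a polynomial vanishing on a nonempty open subset of $\R^k$. Such a polynomial must be identically zero. Hence the Zariski closure of $\exp_L(H)$ is $T_{L'}\cdot\exp_L(\z_0)$, a shifted subtorus of $T_L$.

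For $\exp_{L,\Z^m}$ we compose with the monomial map $T_L\hookrightarrow T$. Because $L\subset\Z^m$ is saturated, this map is an isomorphism onto a closed subtorus of $T$, and it sends $T_{L'}\cdot p$ onto $T_{L'}\cdot p'$ inside $T$. The image of a closed subvariety under a closed embedding is closed, and Zariski density is preserved, so the closure in $T$ is the shifted subtorus $T_{L'}\cdot \exp_{L,\Z^m}(\z_0)$.

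The main obstacle is the bookkeeping showing that $\exp_L(H_0)$ fills out all of $(T_{L'})_{>0}$ rather than a proper piece of it. This uses rationality of $H$ essentially: for an irrational line the Zariski closure would be a larger torus. The splitting of the basis above, which relies on saturation, handles this.
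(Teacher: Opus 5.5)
Your proof is correct. It also covers the slightly more general affine case (rational direction $H_0$, arbitrary base point $\z_0$), which is the form the paper later uses for binomial subspaces $T_H$.

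The paper gives no proof of this proposition; it is stated as a standard fact. The only hint of an argument is the surrounding binomial viewpoint: the ideal of a shifted subtorus is a lattice ideal, and if $H=\{A\z=\b\}$ then $\exp(H)\subset\{\y^A=\exp(\b)\}$. Your adapted-basis route gains two things over that implicit one.

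\textbf{Saturation.} Your argument makes the role of saturation transparent. In the binomial route, the rows of $A$ must be chosen to generate the saturated lattice $H_0^\perp\cap L^\vee$. Otherwise $\{\y^A=\exp(\b)\}$ can be a union of several translates of the subtorus; for instance $2z_1=0$ gives $y_1^2=1$. Extending a basis of $L'=H_0\cap L$ to a basis of $L$ sidesteps this, since it reduces everything to the coordinate subtorus $(\C^\times)^k\times\{1\}^{d-k}$.

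\textbf{Density.} Binomial equations only show that the image lies in the shifted subtorus. Equality of the Zariski closure needs a density statement, and your observation that a Laurent polynomial vanishing on $\R^k_{>0}$ is zero supplies exactly that.

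The one step you leave implicit is that $\exp_L$ is coordinatewise exponentiation in every $\Z$-basis of $L$. This amounts to saying that a ${\rm GL}(d,\Z)$ change of coordinates $\z\mapsto A\z$ corresponds to the monomial automorphism $\y\mapsto\y^A$ of $T_L$, which is immediate because $\exp$ is a homomorphism.

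Your passage to $\exp_{L,\Z^m}$ through the closed embedding $T_L\hookrightarrow T$ is also correct. You are right to invoke saturation of $L$ in $\Z^m$ there: without it, $T_L\to T$ need not even be injective (e.g.\ $2\Z\subset\Z$ gives $t\mapsto t^2$).
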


The ideal of a shifted subtorus is a binomial ideal.

\begin{defn}
        The \emph{lattice ideal} $I_M \subset \C[x_1,\ldots,x_m]$ of a saturated lattice $M \subset \Z^m$ is
$$
I_M := \langle \x^\u - \x^\v \mid \u,\v \in \Z^m \text{ with } \u - \v \in M \rangle.
$$
\end{defn}

Define a semigroup $\Gamma := \Nbb^m/\!\sim_M$ where $\sim_M$ is the equivalence relation $\u \sim_M \v \iff \u-\v \in M$.  The ideal $I_M$ is prime and the quotient $\C[x_1,\ldots,x_m]/I_M$ is the semigroup ring of $\C[\Gamma]$ \cite[Chapter 7]{Miller-Sturmfels}.

Let $\M$ denote a $m \times r$ matrix whose column vectors $\bm_1,\ldots,\bm_r$ generate $M$.  As an ideal in $\C[x_1^{\pm 1},\ldots,x_m^{\pm 1}]$, $I_M$ is generated by $\x^{\bm_i^+} - \x^{\bm_i^-}$, where $\bm_i = \bm_i^+ - \bm_i^-$ and $\bm^{\pm}_i$ extracts the positive (resp. negative) entries of $\bm_i$.

Suppose $L$ is a saturated sublattice of $\Z^m$, then $M := (\Z^m/L)^\vee \subset (\Z^m)^\vee$ is also a saturated sublattice.
We have the following dual short exact sequences of lattices.
\begin{align*}
&0 \to L \to \Z^m \to M^\vee \to 0; \\
&0 \to M \to (\Z^m)^\vee \to L^\vee \to 0.
\end{align*}
Taking $\Hom(-,\C^\times)$ of the first sequence we obtain the short exact sequence of tori:
$$
1\to (\C^\times)^d \to (\C^\times)^m \to (\C^\times)^r \to 1,
$$
where the map $(\C^\times)^m \to (\C^\times)^r$ sends 
$$(a_1,\ldots,a_m) \mapsto (\a^{\bm_1},\ldots,\a^{\bm_r}).$$
The lattices $L^\vee, (\Z^m)^\vee, M$ are the character lattices of the three tori.  The torus $T_L = V(I_M)$ is the subvariety of $T = (\C^\times)^m$ cut out by the ideal $I_M$.  

\subsection{Exponential polytopes}
\label{sec:expP}

We introduce \emph{exponential polytopes}, with the main goal of showing that they are positive geometries whose canonical forms are given by $K_P(\y)\omega_{T_P}$. 

Let $H$ be an affine subspace in  $L \otimes_{\Z} \R \simeq \R^d$, and let $\exp$ be the exponential function. The image $\exp(H)$
is called a \emph{binomial subspace} in $ (\R^\times)^d$, and we denote it by $T_H$. If $H$ is given by linear equations $A \z = \mathbf{b}$ with the matrix $A = (a_{ij})$, then $T_H$ is given by the following binomial equations in variables $\y = \exp(\z)= (y_1, y_2, \dots, y_d)$:
$$
\y^A = \exp(\b), \qquad \text{or equivalently} \qquad y_1^{a_{i1}}y_2^{a_{i2}} \cdots y_d^{a_{id}} = \exp(b_i) \; \text{ for } i=1,2,\ldots d.
$$


\begin{definition}
Let $P$ be a polytope.  The image $\exp(P)$ is called an \emph{exponential polytope}. The \emph{faces} of $\exp(P)$ are defined as the image of the faces of $P$ under $\exp$. We also refer to them as the \emph{exponential faces} in order to distinguish them from the faces of $P$.
\end{definition}

Since the exponential map is injective, every face $\exp(F)$ of $\exp(P)$ is itself an exponential polytope, and the intersection of two faces of $\exp(P)$ is again a face of $\exp(P)$.

Let $H_P\subset L \otimes_{\Z} \R$ be the affine span of $P$. The \emph{binomial subspace spanned by $\exp(P)$} is the binomial subspace spanned by $H_P$, denoted $T_P (= T_{H_P})$.  If $P$ is full-dimensional then $T_P$ is the entire algebraic torus $L\otimes_{\Z}\C^{\times} \simeq (\C^\times)^d$.  If $P$ is not full-dimensional and $\z_0 \in H_P$ is chosen as the origin for the sublattice $L' = L_{H_P,\z_0}$ of $L$ on $H_P$, then $\exp(\z_0)$ serves as the identity for the binomial subspace $T_P$.

For each exponential polytope $\exp(P)$ with $P$ full-dimensional in $L \otimes_\Z \R$, the binomial subspace $T_P$ spanned by it is equipped with the following form
$$
\omega_{T_P} := \frac{d\y}{\y} = \frac{dy_1}{y_1} \wedge \frac{dy_2}{y_2} \wedge  \cdots \wedge \frac{dy_d}{y_d}.
$$

Notice that $\omega_{T_P} = \omega_{T_L}$ is an invariant of $L$, and does not depend on the choice of the origin $\z_0$ in $L$; a translation in $\z$-coordinates is a scalar multiplication in $\y$-coordinates, which preserves the form $\omega_{T_P}$.  

Consider the \keydef $K_P(\y)$, which is a rational function in $y_1, \dots, y_d$. Together with the form $\omega_{T_P}$, we define the following rational form on $T_P$:
\begin{align}\label{eq: Theta_P}
\Theta_P:= K_P(\y) \omega_{T_P} = K_P(\y) \frac{d\y}{\y}.
\end{align}

Since neither $K_P(\y)$ nor $\omega_{T_P}$ depends on the choice of the origin 
the same holds for $\Theta_P$.

\begin{ex}\label{ex: 2-dimold}
Let $L$ be the standard lattice $\mathbb{Z}^2$ in $\R^2$. Let $P$ be the convex hull of $(-2,-1),(0,1), (1,0)$. The vectors $\u_1 = (1,-1)$, $\u_2 = (-1,-1)$ and $\u_3 = (-1,3)$ are the integral generating vectors of the rays of its normal fan $\mathcal{N}(P)$.  The polytope $P$ and the exponential polytope $\exp(P)$ are shown in Figure~\ref{fig:exp-2-dim}.  



One computes the dual lattice function $K_P(\z)$ to be equal to
\begin{eqnarray*}
 && \frac{(1-e^{-1})(1+e^{-1}+e^{-1+z_1}+e^{-1+z_2}+e^{-1+z_1-z_2}+e^{-1+z_1-2z_2}+e^{-1-z_2}+e^{-2+z_1-z_2})}{(1-e^{-1+z_1+z_2}) (1-e^{-1-z_1+z_2})(1-e^{-1+z_1-3z_2})}.
\end{eqnarray*}

The affine span is $H=\R^2$ and the binomial subspace is $T_H=(\R^{\times})^2$. 
The denominator of $K_P(\y)$ has three binomial factors: 
\[ (1-e^{-1}y_1y_2), (1-e^{-1}y_1^{-1}y_2), (1-e^{-1} y_1y_2^{-3}), \]
each corresponding to one of the three facets of $\exp(P)$ shown in the figure. 

Each facet hypersurface is a binomial subspace, and the Zariski closure of these hypersurfaces in $\Pbb^2$ has normalizations isomorphic to the projective line $\Pbb^1$.  In general, the Zariski closures of the facet hypersurfaces are interesting toric varieties.  

\begin{figure}[h!]
\centering
\begin{tikzpicture}[scale=1.0]
\draw[fill=black,opacity=0.2](0,1)--(1,0)--(-2,-1)--(0,1);
\draw[thick](0,1)--(1,0)--(-2,-1)--(0,1);
\node at (0,1) {$\bullet$};
\node at (1,0) {$\bullet$};
\node at (-2,-1) {$\bullet$};
\node[above] at (0,1) {$(0,1)$};
\node[above] at (1,0) {$(1,0)$};
\node[below] at (-2,-1) {$(-2,-1)$};
\draw[thin](0,-1)--(0,2);
\draw[thin](-2,0)--(2,0);
\end{tikzpicture}
\qquad
\begin{tikzpicture}[scale=1.0]

\draw[thin](0,0)--(3.4,0);
\draw[thin](0,0)--(0,3.2);
\node[right] at (3.4,0) {$y_1$};
\node[above] at (0,3.2) {$y_2$};



\draw[fill=black,opacity=0.15]
  plot[domain=1:2.718,samples=80] (\x,{2.718/\x})
  --
  plot[domain=1:0.368,samples=80] ({2.718*\x*\x*\x},{\x})
  --
  plot[domain=0.135:1,samples=80] (\x,{2.718*\x})
  -- cycle;

\draw[thick]
  plot[domain=1:2.718,samples=80] (\x,{2.718/\x});
\draw[thick]
  plot[domain=0.368:1,samples=80] ({2.718*\x*\x*\x},{\x});
\draw[thick]
  plot[domain=0.135:1,samples=80] (\x,{2.718*\x});

\node at (1,2.718) {$\bullet$};
\node at (2.718,1) {$\bullet$};
\node at (0.135,0.368) {$\bullet$};

\node[above] at (1,2.718) {$(1,e)$};
\node[right] at (2.718,1) {$(e,1)$};
\node[left] at (0.135,0.368) {$(e^{-2},e^{-1})$};

\node at (2.65,1.75) {$y_1y_2=e$};
\node at (-0.3,1.53) {$y_2=e y_1$};
\node at (1.55,0.45) {$y_1=e y_2^3$};
\end{tikzpicture}
\caption{A polytope $P$ in $\R^2$ and its exponential polytope $\exp(P)$}
\label{fig:exp-2-dim}
\end{figure}
\end{ex}

\begin{ex}
Let $L$ be the standard lattice $\Z^3$ in $\R^3$. Let $P$ be the convex hull of $(0,0,0),(2,-1,0),(-1,2,0),(0,0,2)$, which is a non-unimodular simplex. See Figure~\ref{fig:exp-3-dim}. Its exponential polytope is \[\exp(P)=\{(y_1,y_2,y_3)\in \R_{>0}^3:
y_3\geq 1,\; y_1y_2^2\geq 1,\; y_1^2y_2\geq 1,\;
y_1^2y_2^2y_3\leq e^2\}.\]
Consider the facet $F$, which is the convex hull of $(2,-1,0),(-1,2,0),(0,0,2)$. It lies in the affine subspace $2z_1+2z_2+z_3=2$. Hence $\exp(F)$ is contained in
$T_F=\{y_1^2y_2^2y_3=e^2\}\subset(\C^\times)^3$. In the standard compactification $(\C^\times)^3\subset \Pbb^3$ with
$[X_0:X_1:X_2:X_3]=[1:y_1:y_2:y_3]$, the Zariski closure is
\[
\overline{T_F}=\{X_1^2X_2^2X_3=e^2X_0^5\}\subset \Pbb^3.
\]
\begin{figure}[h!]
\centering
\begin{tikzpicture}[scale=1.45,
    x={(0.80cm,-0.17cm)}, y={(0.45cm,0.31cm)}, z={(0cm,0.66cm)}]
\coordinate (O) at (0,0,0);
\coordinate (A) at (2,-1,0);
\coordinate (B) at (-1,2,0);
\coordinate (C) at (0,0,2);

\fill[black,opacity=0.16] (A)--(B)--(C)--cycle;
\draw[thick] (O)--(A)--(B)--(O)--(C)--(A);
\draw[thick] (B)--(C);

\draw[thin] (-1.35,0,0)--(2.45,0,0) node[below] {$z_1$};
\draw[thin] (0,-1.35,0)--(0,2.45,0) node[right] {$z_2$};
\draw[thin] (0,0,-1.15)--(0,0,2.45) node[above] {$z_3$};

\node[anchor=north] at (A) {\normalsize $(2,-1,0)$};
\node[anchor=east] at (B) {\normalsize $(-1,2,0)$};
\node[anchor=west] at (C) {\normalsize $(0,0,2)$};
\end{tikzpicture}
\quad
\begin{tikzpicture}[scale=1.20,
    x={(0.62cm,-0.09cm)}, y={(0.31cm,0.19cm)}, z={(0cm,0.22cm)}]
\coordinate (EO) at (1,1,1);
\coordinate (EA) at (7.389,0.368,1);
\coordinate (EB) at (0.368,7.389,1);
\coordinate (EC) at (1,1,7.389);

\draw[thin] (0,0,0)--(8.25,0,0) node[right] {$y_1$};
\draw[thin] (0,0,0)--(0,11.00,0) node[right,xshift=0pt,yshift=0pt] {$y_2$};
\draw[thin] (0,0,0)--(0,0,8.35) node[left] {$y_3$};

\draw[thick] plot[variable=\t,domain=0:1,samples=70]
  ({exp(2*\t)},{exp(-\t)},1);
\draw[thick] plot[variable=\t,domain=0:1,samples=70]
  ({exp(-\t)},{exp(2*\t)},1);
\draw[thick] plot[variable=\t,domain=0:1,samples=60]
  (1,1,{exp(2*\t)});
\foreach \v in {0.2,0.4,0.6,0.8} {
  \draw[gray,thin] plot[variable=\u,domain=0:{1-\v},samples=35]
    ({exp(2-3*\u-2*\v)},{exp(-1+3*\u+\v)},{exp(2*\v)});
}
\foreach \u in {0.2,0.4,0.6,0.8} {
  \draw[gray,thin] plot[variable=\v,domain=0:{1-\u},samples=35]
    ({exp(2-3*\u-2*\v)},{exp(-1+3*\u+\v)},{exp(2*\v)});
}
\draw[very thick,gray] plot[variable=\t,domain=0:1,samples=70]
  ({exp(2-3*\t)},{exp(-1+3*\t)},1);
\draw[very thick,gray] plot[variable=\t,domain=0:1,samples=70]
  ({exp(2-2*\t)},{exp(-1+\t)},{exp(2*\t)});
\draw[very thick,gray] plot[variable=\t,domain=0:1,samples=70]
  ({exp(-1+\t)},{exp(2-2*\t)},{exp(2*\t)});

\foreach \P in {EO,EA,EB,EC} {
  \node at (\P) {$\bullet$};
}
\node[anchor=north,xshift=0pt,yshift=0pt] at (EO)
  {\normalsize $(1,1,1)$};
\node[anchor=south,xshift=0pt,yshift=0pt] at (EA)
  {\normalsize $(e^2,e^{-1},1)$};
\node[anchor=south,xshift=-5pt,yshift=0pt] at (EB)
  {\normalsize $(e^{-1},e^2,1)$};
\node[anchor=south,yshift=4pt] at (EC)
  {\normalsize $(1,1,e^2)$};
\node at (3.50,2.50,3.50)
  {\normalsize $y_1^2y_2^2y_3=e^2$};
\end{tikzpicture}
\caption{A polytope $P$ in $\R^3$ and its exponential polytope $\exp(P)$}
\label{fig:exp-3-dim}
\end{figure}
\end{ex}

\begin{prop}\label{prop:posconv}
The function $K_P(\y)$ takes positive values in the interior of $\exp(P)$.
\end{prop}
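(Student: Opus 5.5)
The plan is to go back to the defining series. For $\y=\exp(\z)$ with $\z$ in the interior of $P$, I will show that the series in Definition~\ref{def:dual-lattice-point-generating-function} converges absolutely and that every term is a positive real number. Then I will check that this convergent sum agrees with the rational function produced by Theorem~\ref{thm: sum-of-cones}. Without loss of generality $P$ is full-dimensional in its affine span $H$, and we work in the lattice $L'$; by the remark after the definition, nothing depends on the ambient lattice.

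Step one: each term $\exp(\min_{\x\in P}\v(\x-\z))$ is the exponential of a real number, hence positive, and the $\v=0$ term equals $1$. Step two is convergence. Since $\z\in\Int(P)$, there is $\epsilon>0$ with $\z+B_\epsilon\subset P$ (in a fixed norm on $H_0$). Hence for every $\v$ in $L^\vee/H_0^\perp$,
\[
\min_{\x\in P}\v(\x-\z)\le -\epsilon\|\v\|,
\]
with the dual norm on $\v$. The terms are therefore bounded by $e^{-\epsilon\|\v\|}$, which is summable over a lattice. The series thus converges absolutely to a number that is at least $1$, and in particular positive.

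Step three is the identification with the rational function. I will use the decomposition $K_P=\sum_{C\in\T}K^\circ_{P,C}$ from Theorem~\ref{thm: sum-of-cones}, grouping the absolutely convergent series over the relative interiors of the cones of a simplicial refinement $\T$. On each cone $C$ with generators $\u_i$, Lemma~\ref{lem: parallelepiped formula interior} is obtained by summing geometric series in the ratios $\exp(\u_i(\p-\z))$, where $\p$ lies in the face $F$ with $C\subset C_F$. For $\z$ in the interior of $P$ we have $\u_i(\p-\z)=\min_{\x\in P}\u_i(\x-\z)<0$, since $\u_i\neq 0$. Hence every ratio has absolute value $<1$, the geometric series converge, and the closed-form rational expression equals the actual sum at such $\z$. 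Summing over the cones then gives that the rational function $K_P(\y)$, evaluated at $\y=\exp(\z)$, equals the positive convergent series.

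The main subtlety is step three: making sure the rational function is the analytic continuation of the series and that the series does converge at every interior point. The strict inequality $\u_i(\p-\z)<0$ is exactly what interiority supplies; it fails on the boundary, where poles appear. The value of the parameter is also fixed here: at $y_0=e$ the terms are real exponentials, and for general real $y_0>1$ the same argument applies.
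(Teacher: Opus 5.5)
Your proof is correct and rests on the same key fact as the paper's: $\u_i(\p-\z)=-h_{P-\z}(\u_i)<0$ for $\z\in\Int(P)$, used through the decomposition $K_P=\sum_{C\in\T}K^\circ_{P,C}$ of Theorem~\ref{thm: sum-of-cones}. The paper reads positivity directly off each closed form in Lemma~\ref{lem: parallelepiped formula interior} (positive numerator, denominator factors $1-e^{-h_{P-\z}(\u_i)}>0$), so your global convergence estimate in step two is not needed for the statement, though your route also shows that the rational function equals the convergent defining series on $\Int(\exp(P))$ and gives the sharper bound $K_P(\y)\ge 1$.
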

\begin{proof}
Let $P$ be full-dimensional in $L \otimes_\Z \R \cong \R^d$.
    When $\z \in \Int(P)$, the image of the support function $h_{P-\z}$ is positive everywhere on $\R^d \setminus {\bf 0}$, and thus $\exp(-h_{P-\z}(\v)) < 1$ for any $\v \neq {\bf 0}$.

By \cref{lem: parallelepiped formula interior} and \cref{thm: sum-of-cones}, $K_P(\z)$ can be written as a sum of terms $K^\circ_{P,C}(\y)$ indexed by cones $C$ in a triangulation of the normal fan of $P$. Each summand $K^\circ_{P,C}(\y)$ is a rational function whose numerator is a positive sum of powers of the positive number $y_0$, and whose denominator is a product of factors of the form $(1- \exp(-h_{P-\z}(\v)))$. Therefore all of the factors are positive when $\z \in \Int(P)$, or equivalently, when $\y \in \Int(\exp(P))$.
\end{proof}

In the nomenclature of \cite{ABL}, \cref{prop:posconv} says that $\exp(P)$ is a \emph{positively convex} geometry.

\subsection{Toric polytopes}\label{sub:toric-polytopes}

We now provide a generalization of polytopes, in the framework of positive geometry, to include objects such as exponential polytopes.

\begin{definition}\label{def:toricpolytope}
    A positive geometry $(X,E)$ with canonical form $\Omega_E$ is a \emph{toric polytope} if it satisfies the following.
\begin{enumerate}
    \item $X$ is a projective normal toric variety, 
    \item either $E$ is a point, or each boundary component of $(X,E)$ is a toric polytope, 
    \item $E$ is homeomorphic to a closed ball,
    \item $E$ is \emph{positive convex}, that is, the canonical form $\Omega_E$ takes constant sign in $\Int(E)$.
\end{enumerate}

\end{definition}

\begin{remark}
    It is also natural to make \cref{def:toricpolytope} stronger by insisting that the toric variety $X_F$ associated to a facet (``boundary component'') $F$ of a toric polytope $(X,E)$  is not just an abstract toric variety, but is a toric subvariety of $X$ in the sense of \cref{sec:toricsub}.  We call such positive geometries \emph{strong toric polytopes}.
\end{remark}


It is immediate to check that a (projective) polytope $P$ with its canonical form as in \cite{GLX} is a toric polytope as in \cref{def:toricpolytope}. Our main result in this subsection is that exponential polytopes also fit in this definition.

\begin{theorem}\label{thm:expP2}
    An exponential polytope $\exp(P)$ is a toric polytope inside the projective toric variety $X(L)$, with its canonical form equal to $\Theta_P$ as defined in \eqref{eq: Theta_P}.
\end{theorem}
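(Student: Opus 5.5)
We argue by induction on $\dim P$, verifying the positive-geometry recursion for $(X(L),\exp(P),\Theta_P)$ together with the four conditions of \cref{def:toricpolytope}. If $\dim P=0$, then $\exp(P)$ is a point, $T_P$ is a point, and $K_P=1$ (the lattice $(L')^\vee$ is trivial). So $\Theta_P=\pm1$, as the definition of positive geometry requires. For $\dim P=d\geq 1$, we first observe that $X(L)$ is a projective normal toric variety by construction, since it is defined as a normalization of an orbit closure. We also observe that $\exp(P)$ is homeomorphic to a closed ball, because $\exp$ is a homeomorphism onto its image. Positive convexity is exactly \cref{prop:posconv}.

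The analytic core is to show that the poles of $\Theta_P$ on $X(L)$ are exactly the boundary hypersurfaces $\overline{T_F}$ for facets $F$, that these poles are simple, and that the residues are correct. By \cref{thm: sum-of-cones}, after triangulating the normal fan, $K_P$ is a sum of terms whose denominators are products of binomials $1-y_0^{h_P(\u)}\y^{-\u}$ with $\u$ a ray generator. Each such binomial vanishes precisely on $T_F$ for the facet $F$ with normal $\u$. Because $\u$ is primitive, $T_F$ is a connected subtorus, so the pole along it is simple. Poles at a non-facet ray of the triangulation must cancel. This follows because $K_P$ is independent of the triangulation (a valuation argument, \cref{thm:valuation}), or alternatively from \cref{thm:ResF} applied to a refinement, where the residue is a sum with canceling signs. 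On the toric boundary $X(L)\setminus T_L$, we need the absence of poles along every torus-invariant divisor. For each ray $\w$ of the fan of $X(L)$, this is \cref{prop:deg} applied with that $\w$, generalizing \cref{cor: simple pole}: the negative $\w$-degree of $K_P$ together with the fact that $\omega_{T_P}$ has only a simple pole there shows that $\Theta_P$ is regular along that divisor. The residue along $\overline{T_F}$ is $\Theta_F=K_F\,\omega_{T_F}$ by \cref{thm:ResF}. Since $\overline{T_F}$ is a shifted subtorus closure, its normalization is $X(L_F,p_F)$ with $L_F=H_F\cap L$. By the inductive hypothesis, $(X(L_F),\exp(F),\Theta_F)$ is a toric polytope. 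This gives condition (2) and completes the recursion. Uniqueness of canonical forms then identifies $\Theta_P$ as the canonical form.

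The main obstacle is the pole analysis on the toric boundary and at the singular or normalized points of $X(L)$. The degree property only controls divisors, not higher-codimension strata, and there is a further subtlety in identifying the boundary components of $(X(L),\exp(P))$ with the $\overline{T_F}$ and not with toric divisors. We would first note that $\exp(P)$ is compact inside $T_{L,\R}$, so its Zariski boundary meets only the $\overline{T_F}$. We would then rely on the fact that a rational top form on a normal variety is determined by its divisor, so that checking codimension one suffices.
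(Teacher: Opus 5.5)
Your proposal is correct and follows essentially the same route as the paper. Regularity along the toric boundary divisors comes from \cref{lem:degw} and \cref{prop:deg}. Inside the torus there are only simple poles, along the $T_F$, coming from the binomial denominators in \cref{thm: sum-of-cones}. Residues come from \cref{thm:ResF}, uniqueness from the rationality of $X(L)$, and the remaining toric-polytope conditions from \cref{prop:posconv} plus induction over facets.

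Two small slips. The denominator binomials are $1-y_0^{-h_P(\u)}\y^{-\u}$, not $1-y_0^{h_P(\u)}\y^{-\u}$. Also, the cleanest way to exclude poles along non-facet rays is to triangulate $\N(P)$ without introducing new rays, which is legitimate because $K_P$ is defined intrinsically as a lattice sum; appealing to \cref{thm:valuation} is not the right justification.
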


Let $Q_L$ be the polytope associated to $L$ as defined previously. (Note that $Q_L$ has no immediate relation to $P$.)  For a facet $F$ of the polytope $Q_L$, let $D(F) \subset X(L,p)$ denote the corresponding torus orbit closure, which is a divisor. Let $\Theta = f(\y)/g(\y) \omega_{T_L}$ be a rational form on $T_L$, viewed as a rational form on $X(L,p) \cong X(L)$.

\begin{lemma}\label{lem:degw}
Let $F$ be a facet of $Q_L$.
    The order of the pole of $\Theta = f(\y)/g(\y) \omega_{T_L}$ on $D(F)$ is equal to $\deg_{\w}( f(\y)/g(\y)) + 1$ where $\w$ is the primitive integer outward-pointing normal vector to the facet $F$.
\end{lemma}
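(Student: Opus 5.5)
We work in local toric coordinates adapted to the facet $F$ of $Q_L$. The variety $X(L)$ is the normalization of the closure of $T_L$ in $\Pbb^m$, and it is the projective toric variety of the polytope $Q_L$. Its torus-invariant prime divisors $D(F)$ correspond to the facets $F$ of $Q_L$, equivalently to the rays of the normal fan of $Q_L$. The ray for $D(F)$ is spanned by the primitive inner normal $-\w$ of $F$, since $\w$ is the primitive outward normal.

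The plan is as follows.

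\textbf{Step 1: change of coordinates.} Choose a basis of the cocharacter lattice $L \cong \Z^d$ containing $-\w$, or equivalently a ${\rm GL}(d,\Z)$ change of coordinates after which $-\w = \e_1$. Consider the corresponding new monomial coordinates $\y' = (y'_1,\ldots,y'_d)$ on $T_L$. Such a change of coordinates is unimodular, so it preserves $\omega_{T_L}$ up to sign. It also transforms $\deg_\w$ into $\deg_{-\e_1}$, and $-\deg_{-\e_1}$ is the lowest power of $y'_1$.

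\textbf{Step 2: local chart at $D(F)$.} Work in the affine chart of $X(L)$ given by the one-dimensional cone $\R_{\geq 0}\e_1$, which is normal since $X(L)$ is normal. This chart is $\C \times (\C^\times)^{d-1}$ with coordinates $(y'_1, y'_2, \ldots, y'_d)$. In this chart $D(F)$ is exactly the hypersurface $\{y'_1 = 0\}$, and a general point of $D(F)$ lies in it. So the order of the pole along $D(F)$ can be computed at a generic point of $\{y'_1=0\}$.

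\textbf{Step 3: read off the order.} Write
\[\frac{f}{g} = (y'_1)^{k}\,\frac{f_0}{g_0},\]
where $f_0$ and $g_0$ are Laurent polynomials in the remaining variables times polynomials in $y'_1$ with nonzero constant term in $y'_1$. Here $k$ equals the lowest $y'_1$-degree of $f$ minus the lowest $y'_1$-degree of $g$. By Step 1 this gives $k = -\deg_\w(f/g)$, using the paper's convention that degree means maximal $\w$-degree and that $\deg_{-\e_1}$ picks out the minimal $y'_1$-exponent.

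The form $\omega_{T_L}$ contains the factor $dy'_1/y'_1$, which contributes one more pole. The total order of vanishing along $D(F)$ is therefore $k-1$, so the pole order is $\deg_\w(f/g)+1$.

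\textbf{Main obstacle.} The delicate points are identifying the divisor $D(F)$ with the ray $\R_{\geq 0}(-\w)$ in the normal fan, and keeping the sign conventions straight (outward versus inner normal, and $\y$ versus $\y^{-1}$). To settle this, I would check the simplest case $Q_L$ equal to the standard simplex, where $X(L)=\Pbb^d$. The facet $\{x_1=0\}$ has $\w = -\e_1$ and $D(F) = \{y_1=0\}$, while the facet $\{\sum x_i = 1\}$ gives the hyperplane at infinity. These agree with the usage in \cref{cor: simple pole}.

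A second subtle point is ensuring that the leading term in $y'_1$ does not cancel identically between $f$ and $g$. This is automatic once $f$ and $g$ are written as Laurent polynomials and factored as above.
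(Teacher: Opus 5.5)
Your proposal is correct and follows the paper's proof. Both restrict to the affine chart $\C\times(\C^\times)^{d-1}$ of the single ray of the normal fan corresponding to $D(F)$, change basis so that this ray is a coordinate direction, and read off the pole order from a one-variable computation. The only difference is cosmetic: you use the inner normal $-\w$ and lowest $y_1'$-degrees to place the divisor at $y_1'=0$, while the paper uses $\w$ and top degrees to place it at $y=\infty$, and these are related by $y_1\mapsto y_1^{-1}$.
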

\begin{proof}
    This is a local calculation, and can be reduced to a calculation on the affine toric variety $\C \times (\C^\times)^{d-1}$ which has a fan consisting of a single ray.  By a change of basis, we may assume that the ray is the $y_1$-axis, and we are reduced to the following statement.  The order of pole of $\frac{f(y)}{g(y)}\frac{dy}{y}$ at $y=\infty$ in $\Pbb^1$ is $1 + \deg(f(y)) - \deg(g(y)) $, which is true.
\end{proof}

\begin{proof}[Proof of \cref{thm:expP2}]
    
We first verify that the pair $(X(L), \exp(P))$, equipped with $\Theta_P$, is a positive geometry.  We need to show that $\Theta_P$ has no poles other than simple ones on the boundary components corresponding to facets of $P$. First note that the form $\omega_{T_P}$ has a simple pole along each boundary divisor of $X(L)$, thus by \cref{lem:degw} and \cref{prop:deg}, the rational form $\Theta_P$ has no poles along any of the boundary divisors of the toric variety $X(L)$ at infinity. Next we check inside the torus $T_P$. By \cref{lem: parallelepiped formula interior} and \cref{thm: sum-of-cones}, $K_P(\y)$ can be written as a sum of local terms $K_{P,F}(\y)$ for the facets $F$ of $P$. In each summand, the denominator factors into terms of the form $1- \frac{y_0^{\u_i\cdot \p }}{\y^{\u_i}}$. After cancellation, the remaining factors correspond to simple poles at the subtori $T_F$ in $T_P$. Since these are the only factors that show up, $\Theta_P$ does not have other poles anywhere else in $T_P$. Therefore $\Theta_P$ has simple poles at and only at the boundary components $T_F$ corresponding to facets $F$ of $P$. In Theorem \ref{thm:ResF}, we have shown that $\Res_{T_F}\Theta_P = \Theta _F$ for each facet $F$ of $P$. Finally let's check uniqueness of the rational form. Since the toric variety $X(L)$ is rational, it has no holomorphic top-forms, so $\Theta_P$ is the unique rational top-form with the correct residues. This completes the proof that this is a positive geometry.

 We now turn to the four additional conditions in Definition \ref{def:toricpolytope}. Three of them are immediate: the exponential polytope $\exp (P)$ is homeomorphic to a closed ball; the projective toric variety $X(L)$ is normal since $L$ is a saturated sublattice; and the positive convexity of $\exp(P)$ is proved in \cref{prop:posconv}.

It remains to check each boundary component is itself a toric polytope. Each facet of $\exp(P)$ is $\exp(F)$ for some facet $F$ of $P$. The corresponding boundary component pair is $(X(L_F), \exp(F))$, where $L_F = L \cap \text{span}(F)$. Since $L$ is a saturated sublattice of $\mathbb{Z}^m$, so is $L_F$, hence $X(L_F)$ is a normal projective toric variety. Furthermore, $F$ is rational and full-dimensional in $L_F$. Therefore by induction, we have that every such boundary component is a toric polytope. Therefore by Definition \ref{def:toricpolytope}, $(X(L), \exp(P))$ itself is a toric polytope.
\end{proof}

\begin{cor}
Suppose that $Q_1,\ldots,Q_r$ is a subdivision of $P$.  Then $\Theta_P = \sum_i \Theta_{Q_i}$.    
\end{cor}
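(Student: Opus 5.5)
The corollary follows almost at once from the valuative property, \cref{thm:valuation}. Since $\Theta_P = K_P(\y)\,\omega_{T_P}$ and the form $\omega_{T_P}=\omega_{T_L}$ depends only on the lattice $L$, it suffices to prove the identity of rational functions
\[K_P(y_0;\y) = \sum_{i=1}^r K_{Q_i}(y_0;\y).\]
We take the statement in the setting of the surrounding section: $P$ is a full-dimensional rational polytope in $L\otimes_\Z\R$, and the $Q_i$ are the full-dimensional cells of a polyhedral subdivision. Then every $Q_i$ shares the torus $T_{Q_i}=T_L$, so all the forms $\Theta_{Q_i}$ live on the same $X(L)$ and can be added.

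To invoke \cref{thm:valuation}, I will write down the polytope-algebra relation coming from the subdivision. Inclusion--exclusion on indicator functions gives
\[[P] = \sum_{i}[Q_i] - \sum_{i<j}[Q_i\cap Q_j] + \cdots,\]
or equivalently, using the face poset of the subdivision, $[P]=\sum_{G}(-1)^{d-\dim G}[G]$, where $G$ runs over the interior cells of the subdivision. Every cell in this relation other than the $Q_i$ themselves is an intersection of distinct maximal cells. Such an intersection is a common face lying in a hyperplane, so it is lower dimensional. Hence in the relation $[P]-\sum_i[Q_i]+(\text{lower-dimensional terms})=0$, the full-dimensional polytopes appear with coefficients $1$ for $P$ and $-1$ for each $Q_i$. \cref{thm:valuation} discards the lower-dimensional terms and yields $K_P-\sum_i K_{Q_i}=0$.

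Multiplying this identity by $\omega_{T_L}$ gives $\Theta_P=\sum_i\Theta_{Q_i}$, as required.

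The only point needing care is the assertion that every correction term in the polytope-algebra relation is lower dimensional. This holds because distinct cells of a subdivision meet in common proper faces, so their intersections have empty interior. If one preferred to avoid the polytope algebra, an alternative is to repeat the argument used in the proof of \cref{thm:valuation}. Refine everything by the hyperplane arrangement of all facet hyperplanes of the $Q_i$, and apply \cref{lem:slice} repeatedly to expand both sides as sums of $K_R$ over chambers $R$. Since each chamber inside $P$ lies in exactly one $Q_i$, the two expansions agree term by term.
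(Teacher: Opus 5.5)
Your argument is correct and is essentially the paper's second (``alternatively'') proof. The paper only remarks that the result follows immediately from \cref{thm:valuation}, and you have written out the polytope-algebra relation $[P]-\sum_i[Q_i]+(\text{lower-dimensional terms})=0$ that makes this application explicit. The paper's primary argument is instead geometric: $\exp(Q_1),\ldots,\exp(Q_r)$ decompose $\exp(P)$, and canonical forms are additive under decompositions of positive geometries, which relies on \cref{thm:expP2}.
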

\begin{proof}
    The exponential polytopes $\exp(Q_1),\ldots,\exp(Q_r)$ give a decomposition of $\exp(P)$.  The identity then follows from general facts about decompositions of positive geometries \cite{ABL,LamOPAC}.  Alternatively, the result follows immediately from \cref{thm:valuation}.
\end{proof}

\begin{remark}
    Let $S \subset \R^d$ be a nonempty union of finitely many $d$-dimensional polytopes $Q_i$.  Then $S$ is a toric polytope if and only if $S$ is a polytope.  To see this, consider the hyperplane arrangement consisting of all facets of all the polytopes $Q_i$.  Then $S$ is also the union of some number of chambers $R_i$ of the arrangement.  Let $H$ be a hyperplane that is a facet of some region $R_i$ and such that the region bordering $R_i$ on the other side of $H$ does not belong to $S$.  The conditions (3) and (4) of \cref{def:toricpolytope} imply that the whole of $S$ must lie on the same side of $H$.  As we vary over all such $H$ we obtain the facets of $S$ as a polytope.
\end{remark}

\bibliographystyle{alpha}
\bibliography{ref}
\end{document}